\newcommand{\x}{\times}
\newcommand{\<}{\langle}
\renewcommand{\>}{\rangle}
\renewcommand{\a}{\alpha}
\renewcommand{\o}{\omega}
\renewcommand{\b}{\beta}
\renewcommand{\d}{\delta}
\newcommand{\D}{\Delta}
\newcommand{\e}{\varepsilon}
\newcommand{\g}{\gamma}
\renewcommand{\l}{\lambda}
\renewcommand{\L}{\Lambda}
\newcommand{\var}{\varphi}
\newcommand{\s}{\sigma}
\renewcommand{\th}{\theta}
\renewcommand{\O}{\Omega}
\newcommand{\z}{\zeta}
\renewcommand{\i}{\infty}
\newcommand{\Lip}{\mathop{\mathrm{Lip}}\nolimits}
\newcommand{\Aut}{\mathop{\mathrm{Aut}}\nolimits}
\newcommand{\dist}{\mathop{\mathrm{dist}}\nolimits}
\newcommand{\prox}{\mathop{\mathrm{prox}}\nolimits}
\newcommand{\tr}{\mathop{\mathrm{tr}}\nolimits}
\newcommand{\diag}{\mathop{\mathrm{diag}}\nolimits}
\newcommand{\cB}{{\mathcal B}}
\newcommand{\cE}{{\mathcal E}}
\newcommand{\cH}{{\mathcal H}}
\newcommand{\cK}{{\mathcal K}}
\newcommand{\cL}{{\mathcal L}}
\newcommand{\cO}{{\mathcal O}}
\newcommand{\cM}{{\mathcal M}}
\newcommand{\cP}{{\mathcal P}}
\newcommand{\cS}{{\mathcal S}}
\newcommand{\bZ}{{\mathbb Z}}
\newcommand{\bR}{{\mathbb R}}
\newcommand{\bC}{{\mathbb C}}
\newcommand{\bT}{{\mathbb T}}
\newcommand{\fG}{{\mathfrak g}}
\newtheorem{theorem}{Theorem}[section]
\newtheorem{proposition}[theorem]{Proposition} 
\newtheorem{mainlemma}[theorem]{Main Lemma} 
\newtheorem{lemma}[theorem]{Lemma} 
\theoremstyle{definition}   
\newtheorem{definition}[theorem]{Definition} 
\newtheorem{example}[theorem]{Example} 
\newtheorem{notation}[theorem]{Notation}
\begin{document}

\title[Leibniz seminorms]{Leibniz seminorms for \\
``Matrix algebras converge to the sphere''}
\author{Marc A. Rieffel}
\address{Department of Mathematics \\
University of California \\ Berkeley, CA 94720-3840}
\email{rieffel@math.berkeley.edu}
\thanks{The research reported here was
supported in part by National Science Foundation grant DMS-0500501.}
\subjclass
{Primary 46L87; Secondary 53C23, 58B34, 81R15, 81R30}
\keywords{quantum metric space, Gromov--Hausdorff distance, Lipschitz,
Leibniz seminorm, coadjoint orbits, coherent states, Berezin symbols}

\begin{abstract}
In an earlier paper of mine relating vector bundles and
Gromov--Hausdorff distance for ordinary compact metric spaces, it was
crucial that the Lipschitz seminorms from the metrics satisfy a strong
Leibniz property.  In the present paper, for the now non-commutative
situation of matrix algebras converging to the sphere (or to other
spaces) for quantum Gromov--Hausdorff distance, we show how to
construct suitable seminorms that also satisfy the strong Leibniz
property.  This is in preparation for making precise certain
statements in the literature of high-energy physics concerning
``vector bundles'' over matrix algebras that ``correspond'' to
monopole bundles over the sphere.  We show that a fairly general
source of seminorms that satisfy the strong Leibniz property consists
of derivations into normed bimodules.  For matrix algebras our main
technical tools are coherent states and Berezin symbols.
\end{abstract}

\maketitle
\allowdisplaybreaks

\section*{Introduction}

In a previous paper \cite{R7} I showed how to give a precise meaning
to statements in the literature of high-energy physics and string
theory of the kind ``Matrix algebras converge to the sphere''.  (See
\cite{R7} for numerous references to the relevant physics literature.)
I did this by introducing the concept of ``compact quantum metric
spaces'', in which the metric data is given by a seminorm on the
non-commutative ``algebra of functions''.  This seminorm plays the
role of the usual Lipschitz seminorm on the algebra of continuous
functions on an ordinary compact metric space.  However, I was
somewhat puzzled by the fact that I needed virtually no algebraic
conditions on the seminorm, only an important analytic condition.  But
when I later began trying to give precise meaning to further
statements in the physics literature of the kind ``here are the vector
bundles over the matrix algebras that correspond to the monopole
bundles over the sphere'' (see \cite{R17} for many references), I
found that for ordinary metric spaces a strong form of the Leibniz
inequality for the seminorm played a crucial role \cite{R17}.  (See,
for example, the proof of proposition~$2.3$ of \cite{R17}.)  However,
on returning to the non-commutative case of matrix algebras converging
to the sphere (or to other spaces), for some time I did not see how to
construct useful seminorms that brought the matrix algebras and sphere
close together while also having the strong Leibniz property.  The
main purpose of this paper is to show how to construct such seminorms.
As in the earlier paper \cite{R7}, the setting is that of coadjoint
orbits of compact semisimple Lie groups, of which the 2-sphere is the
simplest example.  The main technical tools continue to be coherent
states and Berezin symbols.

In the first four sections of this paper we show that a fairly general
setting for obtaining seminorms that possess the strong Leibniz
property that we need consists of derivations into normed bimodules,
and we examine various aspects of this topic.  The strong Leibniz
property for a seminorm $L$ on a normed unital algebra $A$ consists of
the usual Leibniz inequality together with the inequality
\[
L(a^{-1}) \le \|a^{-1}\|^2L(a)
\]
whenever $a$ is invertible in $A$.  I have not seen this latter
inequality discussed in the literature. In Section 4 we put together the 
various conditions that we have found to be important, and there-by
give a tentative definition for a ``compact $C^*$-metric space''.

In Section~\ref{sec5} we examine the use of seminorms with the strong
Leibniz property in connection with quantum Gromov--Hausdorff
distance.  (I expect that many of the ideas and techniques developed
in this paper will apply to many other classes of examples beyond
``Matrix algebras converge to the sphere''.)  In Section~\ref{sec6} we
extend to the case of strongly Leibniz seminorms the construction
technique introduced in \cite{R6} that we called ``bridges''.
Sections~\ref{sec7} and \ref{sec8} contain those pieces of our
development that can be carried out for certain homogeneous spaces of
any compact group (including finite ones).  Section~\ref{sec9} gives
the statement of our main theorem for coadjoint orbits, while
Sections~\ref{sec10} through \ref{sec13} contain the detailed
technical development needed to prove our main theorem.  Finally, in
Section~\ref{sec14} we relate our results to other variants of quantum
Gromov--Hausdorff distance that have been developed by David~Kerr,
Hanfeng~Li, and Wei~Wu \cite{Krr,KrL,Lih2,Lih3,Wuw1,Wuw2,Wuw3}.

We can describe our basic setup and our main theorem somewhat more
specifically as follows, where definitions for various terms are given
in later sections.  Let $G$ be a compact semisimple Lie group, let
$(U,\cH)$ be an irreducible unitary representation of $G$, and let $P$
be the rank-one projection along a highest weight vector for
$(U,\cH)$.  Let $\a$ be the action of $G$ on $\cL(\cH)$ by conjugation
by $U$, and let $H$ be the $\a$-stability group of $P$.  Let $A =
C(G/H)$.  Let $\o$ be the highest weight for $U$, and for each $n \in
\bZ_{>0}$ let $(U^n,\cH^n)$ be the irreducible representation of $G$
of highest weight $n\o$.  Let $\a$ also denote the action of $G$ on
$B^n = \cL(\cH^n)$ by conjugation by $U^n$.

Choose on $G$ a continuous length-function $\ell$.  Then $\ell$ and
the translation action of $G$ on $A$, as well as the actions $\a$ of
$G$ on each $B^n$, determine seminorms $L_A$ on $A$ and $L_{B^n}$ on
$B^n$ that make $(A,L_A)$ and each $(B^n,L_{B^n})$ into compact
$C^*$-metric spaces.

\medskip
\noindent
{\bf Main Theorem} (sketchy statement of Theorem 9.1).  {\em 
For any $\e > 0$ there exists an $N$ such that for any $n \ge N$ we
can explicitly construct a strongly Leibniz seminorm, $L_n$, on $A
\oplus B^n$ making $A \oplus B^n$ into a compact $C^*$-metric space,
such that the quotients of $L_n$ on $A$ and $B^n$ are $L_A$ and
$L_{B^n}$, and for which the quantum Gromov-Hausdorff distance 
between $A$ and $B^n$
is no greater than $\e$.
}

\medskip
I plan to apply the results of this paper in a future paper to discuss
vector bundles over non-commutative spaces (e.g., monopole bundles),
along the lines used for ordinary spaces in \cite{R17}.

I developed part of the material presented here during a ten-week
visit at the Isaac Newton Institute in Cambridge, England, in the Fall
of 2006.  I am very appreciative of the stimulating and enjoyable
conditions provided by the Isaac Newton Institute.

I am grateful to Hanfeng Li for some important comments 
on the first version of this paper, which led to some substantial improvements
given in the present version.


\section{Strongly Leibniz seminorms}
\label{sec1}

From my investigation of the relation between vector bundles on
compact metric spaces that are close together, both for ordinary
spaces \cite{R17} and for non-commutative spaces (a continuing
investigation), I have found that the following properties are very
important when considering the seminorms that play the role of the
Lipschitz seminorms of ordinary metric spaces.  Unless the contrary is
stated, we allow our seminorms to take the value $+\i$, but we require
that they take value $0$ at $0$.  We use the usual conventions for
calculating with $+\i$. The following definition is close to definition 2.1
of \cite{R17}.

\begin{definition}
\label{def1.1}
Let $A$ be a normed unital algebra over $\bR$ or $\bC$, and let $L$ be
a seminorm on $A$.  We say that:
\begin{itemize}
\item[1)] $L$ is {\em Leibniz} if it satisfies the inequality
\[
L(ab) \le L(a)\|b\| + \|a\|L(b)
\]
for all $a,b \in A$.
\item[2)] $L$ is {\em strongly Leibniz} if it is Leibniz, and $L(1) =
0$, and if for any $a \in A$ that has an inverse in $A$, we have
\[
L(a^{-1}) \le \|a^{-1}\|^2L(a).
\]
\item[3)] $L$ is {\em finite} if $L(a) < \i$ for all $a \in A$.
\item[4)] $L$ is {\em semifinite} if $\{a: L(a) < \i\}$ is norm-dense
in $A$.
\item[5)] $L$ is {\em continuous} if it is norm-continuous.
\item[6)] $L$ is {\em lower semicontinuous} if for one $r \in
\bR_{>0}$, hence for all $r > 0$, the set
\[
\{a \in A: L(a) \le r\}
\]
is norm-closed in $A$.
\end{itemize}
If, furthermore, $A$ is a $*$-normed algebra (i.e., has an isometric
involution), then we define $L^*$ by $L^*(a) = L(a^*)$ for $a \in A$.
We then say that $L$ is a {\em $*$-seminorm} if $L = L^*$.
\end{definition}

The proof of the following proposition is straightforward.

\begin{proposition}
\label{prop1.2}
Let $A$ be a unital normed algebra.
\begin{itemize}
\item[i.] Let $L$ be a seminorm on $A$ and let $r \in \bR^+$.  If $L$
satisfies one of the properties 1--6 above then $rL$ satisfies that
same property.
\item[ii.] Let $L_1$ and $L_2$ be two seminorms on $A$.  If they are
both Leibniz, or strongly Leibniz, or finite, or continuous, or lower
semicontinuous, then so is $L_1 + L_2$.
\item[iii.] Let $\{L_{\a}\}$ be a family of seminorms on $A$, possibly
infinite, and let $L$ be the supremum of this family.  (I.e., $L =
\bigvee_{\a} L_{\a}$, defined by $L(a) = \sup_{\a} \{L_{\a}(a)\}$.
For two seminorms, $L$ and $L'$, we will denote their maximum by $L
\vee L'$.)  Then $L$ is a seminorm on $A$, and if each $L_{\a}$ is
Leibniz, or strongly Leibniz, or lower semicontinuous, then so is $L$.
\item[iv.] If $A$ is a $*$-normed algebra and if $L$ satisfies one of
the properties 1--6 above, then $L^*$ satisfies that same property.
\end{itemize}
\end{proposition}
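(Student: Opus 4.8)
The plan is to check parts (i)--(iv) directly, running through the relevant ones of the six properties in each case. The only inputs needed are the homogeneity (of degree one) in $L$ of the Leibniz and strong-Leibniz inequalities, the elementary inequality $\sup_\a (x_\a + y_\a) \le \sup_\a x_\a + \sup_\a y_\a$ for the supremum statement, and, for part (iv), the facts that the involution is an isometric self-homeomorphism of $A$ and that $(a^{-1})^* = (a^*)^{-1}$.

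For (i), fix $r \in \bR^+$. Since the Leibniz inequality and the inequality $L(a^{-1}) \le \|a^{-1}\|^2 L(a)$ are each homogeneous of degree one in $L$, and since $L(1)=0$ plainly gives $rL(1)=0$, the (strong) Leibniz property is preserved on multiplying $L$ by $r$; finiteness and semifiniteness of $rL$ are immediate, as is norm-continuity; and for $r>0$ we have $\{a : rL(a) \le s\} = \{a : L(a) \le s/r\}$, so lower semicontinuity is preserved as well (the case $r=0$ being trivial). For (ii), the Leibniz and strong-Leibniz inequalities for $L_1 + L_2$ are obtained by adding the corresponding inequalities for $L_1$ and $L_2$, and $(L_1+L_2)(1)=0$; a sum of two finite, resp.\ norm-continuous, resp.\ lower semicontinuous, $[0,\i]$-valued functions is again of that type. (Semifiniteness is intentionally not claimed here, since the norm-dense subspaces on which $L_1$ and $L_2$ are finite need not intersect densely.)

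For (iii), subadditivity and homogeneity of $L = \bigvee_\a L_\a$ follow from $\sup_\a(x_\a+y_\a) \le \sup_\a x_\a + \sup_\a y_\a$ and $\sup_\a(|\l| x_\a) = |\l|\sup_\a x_\a$, so $L$ is a seminorm (allowed to take the value $+\i$). Taking the supremum over $\a$ in the $L_\a$-versions of the Leibniz and strong-Leibniz inequalities, and noting that the factors $\|b\|$, $\|a\|$, and $\|a^{-1}\|^2$ do not depend on $\a$, yields those inequalities for $L$, while $L(1) = \sup_\a L_\a(1) = 0$; for lower semicontinuity one uses $\{a : L(a) \le r\} = \bigcap_\a \{a : L_\a(a) \le r\}$, a norm-closed set. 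For (iv), using $\|a^*\|=\|a\|$, the Leibniz inequality for $L^*$ at $(a,b)$ is exactly the Leibniz inequality for $L$ at $(b^*, a^*)$; since $(a^{-1})^* = (a^*)^{-1}$ and hence $\|(a^*)^{-1}\| = \|a^{-1}\|$, the strong-Leibniz inequality for $L^*$ at $a$ is that for $L$ at $a^*$, and $L^*(1) = L(1)$; finiteness is clear, and since the isometric involution carries norm-dense sets to norm-dense sets and norm-closed sets to norm-closed sets, $L^*$ inherits semifiniteness, norm-continuity, and lower semicontinuity from $L$.

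There is essentially no obstacle. The one point worth isolating is that for each fixed invertible $a$ the strong-Leibniz inequality is linear in $L$ --- this is why it survives both the sum in (ii) and the supremum in (iii) --- and, conversely, why the statement lists for (ii) and (iii) only those of the six properties that are genuinely stable under those operations (for instance, semifiniteness fails for sums, and finiteness and norm-continuity fail for suprema).
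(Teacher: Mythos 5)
Your verification is correct and is exactly the direct, elementary check that the paper labels ``straightforward'' and omits. In particular, the two points worth having spelled out --- that the Leibniz and strong-Leibniz inequalities are homogeneous of degree one in $L$ for fixed $a,b$ (hence pass to scalar multiples, sums, and suprema), and that the isometric involution makes part (iv) a change of variables --- are handled cleanly, and you correctly refrain from claiming semifiniteness in (ii) and (iii), matching the statement.
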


I have seen no discussion of the strong Leibniz property in the
literature.  I do not know of an example of a finite Leibniz seminorm
which does not satisfy the inequality for $L(a^{-1})$ in the
definition of ``strongly Leibniz''.  But if we allow the value $+\i$
then examples can be constructed in the following way.  Let $A$ be a
unital normed algebra and let $B$ be a unital subalgebra of $A$.  Let
$L_0$ be a finite Leibniz seminorm on $B$.  Define a Leibniz seminorm,
$L$, on $A$ by $L(a) = L_0(a)$ if $a \in B$ and $L(a) = +\i$
otherwise.  If $B$ contains an element that is invertible in $A$ but
not in $B$ then $L$ is not strongly Leibniz.  For example, let $A =
C([0,1])$ and let $B$ be its subalgebra of polynomial functions, with
$L_0(f) = \|f'\|$.  (This example is not lower semicontinuous.)

Let $A^f = \{a: L(a) < \i\}$.  It is clear that if $L$ is Leibniz then
$A^f$ is a subalgebra of $A$.  If $L$ is in fact strongly Leibniz and
$a \in A^f$, then clearly $a$ is invertible in $A^f$ if and only if it
is invertible in $A$.  It follows that for any $a \in A^f$ the
spectrum of $a$ in $A^f$ will be the same as its spectrum in $A$.  In
stupid examples we may have $1 \notin A^f$, but with that understood,
we see that:

\begin{proposition}
\label{prop1.3}
If $L$ is strongly Leibniz then $A^f$ is a spectrally stable
subalgebra of $A$.
\end{proposition}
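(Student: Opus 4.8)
The plan is to organize into three short steps the argument already sketched in the paragraph preceding the proposition.

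First I would check that $A^f$ is a unital subalgebra of $A$. Because $L$ is a seminorm it is subadditive and positively homogeneous, so $\{a : L(a) < \i\}$ is a linear subspace; the Leibniz inequality $L(ab) \le L(a)\|b\| + \|a\|L(b)$ then shows that $A^f$ is closed under multiplication. Since $L$ is strongly Leibniz we have $L(1) = 0 < \i$, so $1 \in A^f$ and $A^f$ contains the identity of $A$. This is the only place where we genuinely need ``strongly'' rather than merely ``Leibniz'': for a seminorm that is Leibniz but not strongly Leibniz one can have $1 \notin A^f$, as in the ``stupid examples'' mentioned above.

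The key step is inverse-closedness: if $a \in A^f$ is invertible in $A$, then $a^{-1} \in A^f$. This is immediate from the strong Leibniz inequality $L(a^{-1}) \le \|a^{-1}\|^2 L(a)$, whose right-hand side is finite since $L(a) < \i$ and $\|a^{-1}\| < \i$.

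Finally I would deduce spectral stability. Given $a \in A^f$ and a scalar $\l$, the element $\l 1 - a$ lies in $A^f$, so by the previous step it is invertible in $A^f$ exactly when it is invertible in $A$. Hence for each $a \in A^f$ the resolvent set, and therefore the spectrum, of $a$ relative to $A^f$ coincides with the one relative to $A$; equivalently, $A^f$ is inverse-closed, i.e.\ spectrally stable, in $A$. I do not expect a real obstacle: the proposition is essentially a repackaging of the strong Leibniz inequality, the only subtlety being the need for $1 \in A^f$, which is supplied by the requirement $L(1) = 0$.
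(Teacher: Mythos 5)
Your proof is correct and takes essentially the same route as the paper's informal discussion preceding the proposition: the Leibniz inequality makes $A^f$ a subalgebra, the strong Leibniz inequality on inverses gives inverse-closedness, and spectral stability follows by applying this to $\lambda 1 - a$. One small remark: you say that $1 \in A^f$ is ``the only place where we genuinely need strongly,'' but the inverse inequality $L(a^{-1}) \le \|a^{-1}\|^2 L(a)$ used in your key step is also part of the strong Leibniz condition and is the real engine of the result; also note the paper hedges with ``in stupid examples we may have $1 \notin A^f$,'' whereas, as you correctly observe, Definition~\ref{def1.1} builds $L(1)=0$ into ``strongly Leibniz,'' so $1 \in A^f$ is automatic.
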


The importance of this proposition will be seen in Section~\ref{sec3}.
We also remark that if $A$ has an involution and if $L$ is a Leibniz
seminorm that is also a $*$-seminorm, then $A^f$ is a $*$-subalgebra
of $A$.

Simple arguments prove the following two propositions.

\begin{proposition}
\label{prop1.4}
Let $A$ be a normed unital algebra, and let $L$ be a seminorm on $A$.
Let $B$ be a unital subalgebra of $A$, equipped with the norm from
$A$.  If $L$ is Leibniz, or strongly Leibniz, or finite, or
continuous, or lower semicontinuous, then so is the restriction of $L$
to $B$ as a seminorm on $B$.  (But if $L$ is semifinite, its
restriction to $B$ need not be semifinite.)
\end{proposition}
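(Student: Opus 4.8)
The plan is to observe first that the restriction $L|_B$, which I abbreviate to $L_B$, is automatically a seminorm on $B$, and that by hypothesis $B$ carries precisely the norm it inherits from $A$; one then checks the relevant properties one at a time, each by an immediate inspection. For the Leibniz inequality, given $a,b\in B$ one has $ab\in B$, so $L_B(ab)=L(ab)\le L(a)\|b\|+\|a\|L(b)=L_B(a)\|b\|+\|a\|L_B(b)$ since the norms agree. For the strong Leibniz property, a unital subalgebra contains the unit of $A$, so $L_B(1)=L(1)=0$; and if $a\in B$ has an inverse in $B$, then that inverse is a two-sided inverse of $a$ in $A$ as well, hence coincides with $a^{-1}$ and lies in $B$, so the inequality $L(a^{-1})\le\|a^{-1}\|^2L(a)$ carries over verbatim. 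Finiteness is clear, since $L_B(a)=L(a)<\infty$ for every $a\in B$; norm-continuity is clear because $L_B$ is $L$ precomposed with the isometric, hence continuous, inclusion $B\hookrightarrow A$; and for lower semicontinuity, for fixed $r>0$ the set $\{b\in B:L_B(b)\le r\}$ equals $B\cap\{a\in A:L(a)\le r\}$, the intersection of $B$ with a norm-closed subset of $A$, hence is norm-closed in $B$.

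None of that presents a genuine obstacle. The one assertion that needs an actual construction is that semifiniteness need not pass to $B$, and here the plan is to reuse the ``exotic'' seminorm considered earlier in this section: take $A=C([0,1])$ and let $L$ equal $\|f'\|_\infty$ on the norm-dense subalgebra of polynomial functions and $+\infty$ elsewhere, so that $L$ is semifinite and $A^f$ is the algebra of polynomial functions. Let $B$ be the closed unital subalgebra of $A$ generated by the single function $x\mapsto\sin(2\pi x)$; composition with this function identifies $B$ isometrically with $C([-1,1])$.

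I would then argue that a polynomial function lying in $B$ must be symmetric under each of the reflections $x\mapsto 1/2-x$ and $x\mapsto 3/2-x$ (the reflections fixing $1/4$ and $3/4$, the two points of $[0,1]$ about which $\sin(2\pi x)$ is symmetric), hence invariant under the translation $x\mapsto x+1$, hence constant; thus $B\cap A^f$ consists only of the constant functions, which are not norm-dense in $B$, and so $L_B$ fails to be semifinite. This verification of the description of $B\cap A^f$ is the only mildly delicate point in the whole argument, and it too is elementary.
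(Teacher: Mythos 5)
Your verification of the five preserved properties is exactly the ``simple argument'' the paper alludes to (the paper gives no explicit proof of Proposition~\ref{prop1.4}, only the remark that simple arguments suffice), and each step is correct: the restriction is a seminorm on $B$ for the inherited norm, the Leibniz and strong Leibniz inequalities pass through unchanged because $B$ is a unital subalgebra so $1_A \in B$ and any inverse in $B$ is the unique inverse in $A$, finiteness and continuity are immediate, and lower semicontinuity follows because $\{b \in B : L(b) \le r\}$ is the intersection of $B$ with a norm-closed subset of $A$, hence closed in $B$.

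The paper does not supply an example for the parenthetical, so your counterexample is your own contribution, and it is correct. With $A = C([0,1])$, $L = \|\cdot'\|_\infty$ on the polynomials and $+\infty$ elsewhere, and $B$ the closed unital subalgebra generated by $s(x) = \sin(2\pi x)$, one does have $B = \{g \circ s : g \in C([-1,1])\}$ by Stone--Weierstrass, and the isometry claim holds since $s$ maps $[0,1]$ onto $[-1,1]$. Your symmetry argument is sound: a polynomial $p$ agreeing with some $g \circ s$ on $[0,1]$ satisfies $p(x) = p(1/2-x)$ on $[0,1/2]$ and $p(x) = p(3/2-x)$ on $[1/2,1]$, hence (being polynomial identities verified on a set with accumulation points) on all of $\bR$; composing the two reflections yields $p(x) = p(x+1)$, forcing $p$ to be constant. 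Thus $B \cap A^f = \bC 1$ is not dense in $B$. One could get away with a less structured example (any closed subalgebra of $C([0,1])$ whose only polynomial members are the constants would do, and these abound), but the concrete choice and its verification are clean and complete.
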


\begin{proposition}
\label{prop1.5}
Let $A$ be a $*$-normed unital algebra and let $L$ be a seminorm on $A$. Let
${\tilde L} = L \vee (L^*)$.  Then ${\tilde L}$ is a $*$-seminorm.
 If $L$ is Leibniz, or strongly Leibniz, or finite, or
continuous, or lower semicontinuous, then so is $\tilde L$.
(But if $L$ is semifinite, $\tilde L$ need not be semifinite.)
\end{proposition}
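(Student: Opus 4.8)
The plan is to read off all the positive assertions from Proposition~\ref{prop1.2} together with one or two elementary remarks, and then to treat the parenthetical negative assertion by constructing an explicit example; that example is where the real work lies. For the $*$-seminorm property: since the involution satisfies $(a^*)^* = a$, for every $a\in A$ we have $\tilde L(a^*) = L(a^*)\vee L(a^{**}) = L(a^*)\vee L(a) = \tilde L(a)$, so $(\tilde L)^* = \tilde L$. (That $\tilde L$ is a seminorm at all is the two-element case of Proposition~\ref{prop1.2}(iii), applied to the family $\{L,L^*\}$.) By Proposition~\ref{prop1.2}(iv), $L^*$ is Leibniz, or strongly Leibniz, or lower semicontinuous, whenever $L$ is; applying Proposition~\ref{prop1.2}(iii) to $\{L,L^*\}$ then gives the same property for $\tilde L = L\vee L^*$. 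The properties ``finite'' and ``continuous'' are not covered by Proposition~\ref{prop1.2}(iii) (which is stated for arbitrary suprema), but for a pointwise maximum of two functions they are immediate: if $L$ is finite so is $L^*$, and then $\tilde L(a) = \max\{L(a),L^*(a)\} < \infty$ for all $a$; if $L$ is norm-continuous so is $L^*$, and the pointwise maximum of two norm-continuous real-valued functions on $A$ is norm-continuous (for instance because $\max\{s,t\} = (s+t+|s-t|)/2$). This settles all the positive claims.

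The substantive point, and the one I expect to require the most care, is the assertion that semifiniteness need \emph{not} pass to $\tilde L$. Since $A^{\tilde L} = \{a:\tilde L(a)<\infty\} = A^f\cap(A^f)^*$, the task is to produce a $*$-normed unital algebra carrying a semifinite seminorm $L$ for which the two dense subalgebras $A^f$ and $(A^f)^*$ meet in a non-dense set. My plan is to take $A = C(\bT)$ with the isometric (but non-$C^*$) involution $g^*(z) = \overline{g(\bar z)}$, to fix a closed arc $U\subset\bT$, slightly longer than a semicircle, positioned so that $U$ and its mirror image $\bar U = \{\bar z: z\in U\}$ both omit nonempty open arcs while together covering $\bT$ and overlapping in an arc, and to set $A^f = B := \{g\in C(\bT) : g|_U \text{ coincides on } U \text{ with some polynomial in } z\}$, with $L$ equal to $0$ on $B$ and $+\infty$ off $B$ (a seminorm of the $\{0,+\infty\}$-valued kind discussed earlier). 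One checks routinely that $B$ is a proper unital subalgebra, and that it is norm-dense in $C(\bT)$: polynomials are uniformly dense in $C(U)$ because the complement of the arc $U$ in the Riemann sphere is connected, and one then patches such a uniform approximant on $U$ to the given function off $U$ by a collar/partition-of-unity argument. Hence $L$ is semifinite.

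It remains to see that $\tilde L$ is \emph{not} semifinite. Here $(A^f)^* = \{g : g|_{\bar U} \text{ coincides with some polynomial in } z\}$, so a function in $A^f\cap(A^f)^*$ coincides with a polynomial $p$ on $U$ and with a polynomial $q$ on $\bar U$; because $p$ and $q$ then agree on the infinite set $U\cap\bar U$, they are equal, so such a function is the restriction to $\bT$ of a single polynomial in $z$. Thus $A^f\cap(A^f)^*$ is exactly the algebra of analytic trigonometric polynomials, whose norm-closure is the disc algebra --- a proper closed subalgebra of $C(\bT)$ --- so $A^{\tilde L}$ is not dense. The only genuinely routine items left in this last step are the density of $B$ and the identification of $(A^f)^*$; the conceptual crux is simply the observation that two dense subalgebras (here $A^f$ and its isometric image $(A^f)^*$) can intersect in something non-dense, which is what the arc construction engineers.
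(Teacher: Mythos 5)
Your proof of the positive assertions is correct and is essentially what the paper has in mind when it dispenses with Propositions~1.4 and~1.5 by saying ``simple arguments prove the following two propositions'': deduce the Leibniz, strongly-Leibniz, and lower-semicontinuity cases from Proposition~\ref{prop1.2}(iii) and~(iv), and handle finiteness, continuity, and the $*$-seminorm identity directly. Two tiny remarks there: continuity is in fact already in the list of ``properties 1--6'' covered by Proposition~\ref{prop1.2}(iv), so you could also cite it for $L^*$; and your identity $\max\{s,t\}=(s+t+|s-t|)/2$ makes the continuity of a pairwise maximum completely explicit, which is a nice touch.

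The real content you add is the counterexample to semifiniteness, which the paper states without proof. Your construction is correct, and I verified the nontrivial steps. The map $g\mapsto g^*$ with $g^*(z)=\overline{g(\bar z)}$ is indeed an isometric algebra involution on $C(\bT)$ (antilinear, multiplicative, involutive, norm-preserving), so $(C(\bT),\|\cdot\|_\infty,{}^*)$ is a legitimate $*$-normed unital algebra in the sense of Definition~\ref{def1.1}. The $\{0,+\infty\}$-valued $L$ is a seminorm with $L(0)=0$. For density of $B$: uniform polynomial approximation on a proper closed arc $U'$ is Lavrentiev/Walsh (the complement of $U'$ in $\bC$ is connected and $U'$ has empty planar interior), and the ``collar'' patching is most cleanly carried out by taking the approximating polynomial $p$ on a slightly larger arc $U'\supset U$ and setting $g=\phi p+(1-\phi)f$ with $\phi\equiv 1$ on $U$, $\operatorname{supp}\phi\subset U'$, which gives $\|g-f\|\le\|p-f\|_{C(U')}$; it is worth spelling this out, since if one approximates only on $U$ itself the naive patching estimate does not quite close. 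Your identification $(A^f)^*=\{h:h|_{\bar U}\text{ is a polynomial}\}$ is correct because conjugating coefficients turns $\overline{p(\bar z)}$ into another polynomial in $z$. Since two polynomials agreeing on the infinite set $U\cap\bar U$ are equal, $A^{\tilde L}=A^f\cap(A^f)^*$ is exactly the restrictions to $\bT$ of polynomials in $z$, whose closure (the boundary-value algebra of the disc algebra, i.e.\ continuous functions with vanishing negative Fourier coefficients) is proper. One caveat on the geometry you should make explicit: $U$ must not be symmetric under $z\mapsto\bar z$ (otherwise $\bar U=U$ and $A^{\tilde L}=A^f$); a concrete choice such as $U=\{e^{i\theta}:0\le\theta\le\pi+2\delta\}$ for small $\delta>0$ does the job, giving $U\cup\bar U=\bT$ and $U\cap\bar U$ containing the arc $\{e^{i\theta}:\pi-2\delta\le\theta\le\pi+2\delta\}$. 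With that spelled out, the example is airtight, and it supplies a justification the paper leaves implicit.
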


So in this way we can usually arrange to work with $*$-seminorms when
dealing with $*$-algebras.

Here is another way to combine seminorms:

\begin{proposition}
\label{prop1.6}
Let $L_1,\dots,L_n$ be seminorms on a normed unital algebra $A$, and
let $\|\cdot\|_0$ be a norm on $\bR^n$ with the property that if
$(r_j),(s_j) \in \bR^n$, and if $0 \le r_j \le s_j$ for all $j$, $1
\le j \le n$, then $\|(r_j)\|_0 \le \|(s_j)\|_0$.  Define a seminorm,
$N$, on $A$ by $N(a) = \|(L_j(a))\|_0$, with the evident meaning if
$L_j(a)= \i$ for some $j$.  If each $L_j$ satisfies a particular one
of properties $1$, $2$, $3$, $5$, $6$ of Definition~{\em \ref{def1.1}}
then $N$ satisfies that property too.
\end{proposition}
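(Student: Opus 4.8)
The plan is to verify the seminorm axioms for $N$ and then to treat each of properties $1$, $2$, $3$, $5$, $6$ in turn, exploiting throughout that the monotonicity hypothesis on $\|\cdot\|_0$ lets one push coordinatewise inequalities through $\|\cdot\|_0$. First I would check that $N$ is a seminorm: homogeneity is immediate from homogeneity of $\|\cdot\|_0$ under scaling by the nonnegative scalar $|\l|$, and the triangle inequality follows by combining $0 \le L_j(a+b) \le L_j(a)+L_j(b)$ with the monotonicity of $\|\cdot\|_0$ and then its own triangle inequality, the convention that $\|\cdot\|_0$ takes the value $+\i$ whenever one of its arguments is $+\i$ making this work when some $L_j$ is infinite. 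The very same pattern — apply the defining inequality of the property coordinatewise, then apply monotonicity of $\|\cdot\|_0$, then finish with its triangle inequality and homogeneity — gives property $1$ (from $L_j(ab)\le L_j(a)\|b\|+\|a\|L_j(b)$) and the inverse inequality in property $2$ (from $L_j(a^{-1})\le \|a^{-1}\|^2 L_j(a)$); and $N(1)=0$ because each $L_j(1)=0$. Property $3$ is trivial: if every $L_j(a)$ is finite then $(L_j(a))_j$ is a genuine point of $\bR^n$, so $N(a)<\i$.

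For property $5$ I would first recall that a norm-continuous seminorm on a normed space is automatically Lipschitz (continuity at $0$ together with homogeneity bounds it by a constant multiple of the norm, and $|L(a)-L(b)|\le L(a-b)$). Taking $c_j$ to be a Lipschitz constant for $L_j$ and using that all norms on $\bR^n$ are equivalent, say $\|\cdot\|_0\le C\|\cdot\|_1$ for the $\ell^1$-norm $\|\cdot\|_1$, one gets $|N(a)-N(b)|\le N(a-b)\le C\sum_j L_j(a-b)\le C\big(\sum_j c_j\big)\|a-b\|$, so $N$ is Lipschitz, hence continuous (and finite-valued).

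Lower semicontinuity (property $6$) is the one case whose proof is not purely coordinatewise, and I expect it to be the main obstacle. One hands-on route: to show $N(a)\le \liminf_k N(a_k)$ whenever $a_k\to a$, pass to a subsequence along which $N(a_k)$ converges to this $\liminf$ (assumed finite); equivalence of norms on $\bR^n$ then forces the vectors $(L_j(a_k))_j$ to be bounded, so a further subsequence has $(L_j(a_k))_j\to (m_j)_j$ in $\bR^n$, whence $\|(m_j)_j\|_0=\liminf_k N(a_k)$ by continuity of $\|\cdot\|_0$ on $\bR^n$; lower semicontinuity of each $L_j$ gives $L_j(a)\le m_j$, and then monotonicity of $\|\cdot\|_0$ gives $N(a)=\|(L_j(a))_j\|_0\le\|(m_j)_j\|_0$. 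A more conceptual route, which simultaneously re-proves properties $1$ and $2$: the monotonicity of $\|\cdot\|_0$ is precisely what one needs to represent it on the nonnegative orthant as $\|r\|_0=\sup\{\<\l,r\>:\l\in\L_+\}$, where $\L_+$ is the set of \emph{nonnegative} covectors $\l$ satisfying $\<\l,s\>\le\|s\|_0$ for all $s$ in the nonnegative orthant; one produces such a $\l$ by taking a Hahn--Banach covector for a given $r\ge 0$ and replacing it by its coordinatewise positive part, monotonicity of $\|\cdot\|_0$ guaranteeing that this positive part still lies in $\L_+$. Then $N=\bigvee_{\l\in\L_+}\big(\sum_j \l_j L_j\big)$; each finite combination $\sum_j\l_j L_j$ inherits the relevant property from the $L_j$ by Proposition~\ref{prop1.2}(i)--(ii), and Proposition~\ref{prop1.2}(iii) then transfers being Leibniz, strongly Leibniz, or lower semicontinuous to the supremum $N$. (Properties $3$ and $5$ would still have to be handled separately, as suprema need not preserve them.)
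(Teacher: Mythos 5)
Your proposal is correct. For properties $1$, $2$, $3$ the argument (apply the defining coordinatewise inequality, then monotonicity of $\|\cdot\|_0$, then its triangle inequality and homogeneity) is exactly the paper's, and for property $5$ you simply give more detail than the paper's ``it is clear'' by passing through the standard observation that a continuous seminorm on a normed space is Lipschitz. Your hands-on route for property $6$ is also essentially the paper's: the paper works directly with the closed-sublevel-set formulation, taking a sequence $a_m\to a$ with $N(a_m)\le K$, extracting a convergent subsequence of $(L_j(a_m))_j$ in the compact $K$-ball of $(\bR^n,\|\cdot\|_0)$, and pushing the limit through the lower semicontinuity of each $L_j$ and the monotonicity of $\|\cdot\|_0$; your $\liminf$ phrasing is an equivalent repackaging of the same compactness step, and your note that each $L_j(a)\le m_j$ needs the sublevel-set characterization at level $m_j+\e$ is the same small point the paper handles by introducing $\e$.

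Your second, ``conceptual'' route is a genuinely different argument and worth highlighting: by Hahn--Banach and a positive-part truncation (justified exactly by the monotonicity hypothesis on $\|\cdot\|_0$), one represents $\|\cdot\|_0$ on the nonnegative orthant as a supremum of nonnegative linear functionals $\l$, so that $N=\bigvee_{\l}\sum_j\l_j L_j$. This reduces the Leibniz, strongly Leibniz, and lower semicontinuity cases to Proposition~\ref{prop1.2} parts (i), (ii), (iii), making all three follow from one structural observation and avoiding the compactness argument entirely; what it buys is uniformity and reusability (any property preserved by nonnegative scalar multiples, finite sums, and suprema comes for free), at the cost of having to handle finiteness and continuity separately, as you correctly note. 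The paper does not take this route, so this is a legitimate alternative proof of the nontrivial cases.
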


\begin{proof}
If each $L_j$ is Leibniz, then
\begin{align*}
N(ab) &= \|(L_j(ab))\|_0 \le \|(L_j(a)\|b\| + \|a\|L_j(b))\|_0 \\
&\le \|(L_j(a))\|_0\|b\| + \|a\|\|(L_j(b))\|_0 = N(a)\|b\| +
\|a\|N(b),
\end{align*}
and if each $L_j$ is strongly Leibniz, then also
\[
N(a^{-1}) = \|(L_j(a^{-1}))\|_0 \le \|(\|a^{-1}\|^2L_j(a))\|_0 =
\|a^{-1}\|^2N(a).
\]
It is clear that if each $L_j$ is finite, or continuous, then so is
$N$.

Suppose instead that each $L_j$ is lower semicontinuous.  Let $(a_m)$
be a sequence in $A$ which converges in norm to $a \in A$, and suppose
that there is a constant, $K$, such that $N(a_m) \le K$ for each $m$.
For each $m$ let $p^m = (L_j(a_m)) \in \bR^n$, so that $\|p^m\|_0 \le
K$.  Since the $K$-ball of $\bR^n$ for $\|\cdot\|_0$ is compact, we
can pass to a convergent subsequence, so we can assume that the
sequence $\{p^m\}$ converges to a vector, $p$, in $\bR^n$ such 
that $\|p\|_0 \leq K$ and whose entries are non-negative.  Let $\e >
0$ be given.  Then there is an integer $m_{\e}$ such that if $m \ge
m_{\e}$ then $L_j(a_m) \le p_j + \e$ for each $j$.  Since each $L_j$
is lower semicontinuous, it follows that $L_j(a) \le p_j + \e$ for
each $j$.  Then $N(a) = \|(L_j(a))\|_0 \le \|(p_j+\e)\|_0 \le \|p\|_0
+ \e\|(1,\dots,1)\|_0$.  Thus $N(a) \le K$ since $\|p\|_0 \le K$ and
$\e$ is arbitrary.
\end{proof}


\section{General sources of strongly Leibniz seminorms}
\label{sec2}

We will now examine general methods for constructing strongly Leibniz
seminorms.  We recall first \cite{GVF} that a {\em first-order
differential calculus} over a unital algebra $A$ is a pair $(\O,d)$
consisting of a bimodule $\O$ over $A$ and a derivation $d$ from $A$
into $\O$, that is, a linear map from $A$ into $\O$ such that
\[
d(ab) = (da)b + a(db)
\]
for all $a,b \in \O$.  (We will always assume that our bimodules are
such that $1_A$ acts as the identity operator on both left and right.)
It is common to assume that $\O$ is generated as a bimodule by the
range of $d$, but we will not need to impose this requirement, though
it can always be arranged by replacing $\O$ by its sub-bimodule
generated by the range of $d$.

Suppose now that $A$ is a normed unital algebra (with $\|1_A\| = 1$),
and that $(\O,d)$ is a first-order differential calculus for $A$.
Assume further that $\O$ is equipped with a norm that makes it into a
normed $A$-bimodule, that is,
\[
\|a\o b\|_{\O} \le \|a\|\|\o\|_{\O}\|b\|
\]
for all $a,b \in A$ and $\o \in \O$.  We will then say that
$(\O,d,\|\cdot\|_{\O})$ is a {\em normed first-order differential
calculus}.  We do not require that $d$ be continuous for the norms on
$A$ and $\O$.  We define $L$ on $A$ by
\[
L(a) = \|da\|_{\O}.
\]
Notice that $L$ is finite, and that $L$ is continuous if $d$ is.

\begin{proposition}
\label{prop2.1}
Let $L$ on $A$ be defined as above for a normed first-order
differential calculus.  Then $L$ is strongly Leibniz.
\end{proposition}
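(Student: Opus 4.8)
The plan is to verify the two defining inequalities of the strong Leibniz property directly from the axioms of a normed first-order differential calculus, plus the elementary observation that $d(1_A) = 0$. The latter follows by applying the derivation identity to $1_A = 1_A \cdot 1_A$, which gives $d(1_A) = (d1_A)1_A + 1_A(d1_A) = 2\,d(1_A)$, whence $d(1_A)=0$ and $L(1_A)=0$.

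For the ordinary Leibniz inequality, I would simply expand $L(ab) = \|d(ab)\|_{\O} = \|(da)b + a(db)\|_{\O}$, apply the triangle inequality for $\|\cdot\|_{\O}$, and then use the bimodule-norm estimate $\|(da)b\|_{\O} \le \|da\|_{\O}\|b\| = L(a)\|b\|$ and similarly $\|a(db)\|_{\O} \le \|a\|L(b)$. This gives $L(ab) \le L(a)\|b\| + \|a\|L(b)$ with no difficulty.

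For the inverse inequality, suppose $a$ is invertible in $A$. The key identity is obtained by differentiating $a a^{-1} = 1_A$: using $d(1_A)=0$ we get $(da)a^{-1} + a\,d(a^{-1}) = 0$, so $d(a^{-1}) = -a^{-1}(da)a^{-1}$. Taking norms and applying the bimodule estimate twice yields $L(a^{-1}) = \|a^{-1}(da)a^{-1}\|_{\O} \le \|a^{-1}\|\,\|da\|_{\O}\,\|a^{-1}\| = \|a^{-1}\|^2 L(a)$, which is exactly the required bound.

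The main (and only mild) obstacle is the bookkeeping around the possibility that $L$ takes the value $+\infty$; but since $L$ defined from a normed differential calculus is finite (as remarked just before the proposition), this issue does not arise here, and the proof is purely formal. I would also note that $L$ is a seminorm because $d$ is linear and $\|\cdot\|_{\O}$ is a norm, so the verification is complete once the three estimates above are recorded.
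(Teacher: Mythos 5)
Your proof is correct and follows essentially the same route as the paper: observe $d(1_A)=0$, differentiate $aa^{-1}=1_A$ to obtain $d(a^{-1})=-a^{-1}(da)a^{-1}$, and then apply the bimodule norm estimate. The only difference is that you spell out the standard one-line derivations that the paper leaves implicit (the proof that $d(1_A)=0$ and the expansion for the Leibniz inequality), which is fine but not a different argument.
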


\begin{proof}
That $L$ is Leibniz follows immediately from the definitions of a
derivation and of a normed bimodule.  To see that $L$ is strongly
Leibniz, notice first that from the definition of a derivation we
obtain $d(1_A) = 0$, so that $L(1_A) = 0$.  Suppose now that $a$ is an
invertible element of $A$.  Then
\[
0 = d(1_A) = d(aa^{-1}) = (da)a^{-1} + a(d(a^{-1})).
\]
Thus
\[
d(a^{-1}) = -a^{-1}(da)a^{-1}.
\]
On taking the norm we see that $L(a^{-1}) \le \|a^{-1}\|^2L(a)$.
\end{proof}

We remark that no effective characterization seems to be known as
to which Leibniz
seminorms come from normed first-order differential calculi (or
of inner ones) as above.
They fall within the scope of the ``flat'' differential seminorms defined
in definition 4.3 of \cite{BlC}, and for which equivalent conditions are
given in theorem 4.4 of \cite{BlC}. Necessary conditions for a
differential seminorm to be flat are given immediately after
definition 4.3 and in proposition 4.7 of \cite{BlC}. For Leibniz
seminorms we see above that a further necessary condition
is that of being strongly Leibniz.

Let us now give some examples.

\begin{example}
\label{exam2.2}
Let $(X,\rho)$ be a compact metric space.  For given $x_0,x_1 \in X$
with $x_0 \ne x_1$ let $\O_{x_0,x_1}$ be $\bR$ or $\bC$ according to
whether $A = C(X)$ is over $\bR$ or $\bC$, and define actions of $A$
on $\O_{x_0,x_1}$ by
\[
f \cdot \o = f(x_0)\o, \quad w \cdot f = \o f(x_1).
\]
Define $d$ by
\[
df = (f(x_1) - f(x_0))/\rho(x_1,x_0).
\]
It is easily checked that $(\O_{x_0,x_1},d)$ is a first-order
differential calculus over $A$.  Give $A = C(X)$ its supremum norm,
$\|\cdot\|_{\i}$, and give $\O_{x_0,x_1}$ the usual norm on $\bR$ or
$\bC$.  Then $\O_{x_0,x_1}$ is a normed $A$-bimodule.  Clearly $d$ is
continuous.  We set
\[
L_{x_0,x_1}(f) = \|df\| = |f(x_1) - f(x_0)|/\rho(x_1,x_0).
\]
Then from Proposition~\ref{prop2.1} it follows that $L_{x_0,x_1}$ is
strongly Leibniz (and continuous).

Now let $L$ be the supremum of the $L_{x_0,x_1}$ over all pairs
$(x_0,x_1)$ with $x_1 \ne x_0$.  We obtain in this way the usual
Lipschitz seminorm, $L^\rho$, on $C(X)$.  
From Proposition~{\ref{prop1.2}} it follows that $L^\rho$ is strongly
Leibniz and lower semicontinuous.  Of course $L^\rho$ is not
continuous in general.  But $L^\rho$ is semifinite, since it is finite
on the functions $f_{x_0}(x) = \rho(x,x_0)$, and these already
generate a dense subalgebra, as seen by means of the
Stone--Weierstrass theorem.

This example can be recast in a quite familiar form as follows.  Let
$Z = (X \x X)\setminus \D$ where $\D$ is the diagonal of $X \x X$.
Thus $Z$ is a locally compact space.  Let $\O = C_b(Z)$, the linear
space of bounded continuous functions on $Z$ with the supremum norm.
Then $\O$ is a normed $C(X)$-bimodule for the actions
\[
(f\o)(x_0,x_1) = f(x_0)\o(x_0,x_1),\quad (\o f)(x_0,x_1) =
\o(x_0,x_1)f(x_1).
\]
Let $A$ denote the subalgebra of $C(X)$ consisting of the Lipschitz
functions, and define a derivation $d$ from $A$ to $C(Z)$ by
\[
(df)(x_0,x_1) = (f(x_1) - f(x_0))/\rho(x_1,x_0).
\]
Then the usual Lipschitz seminorm is given by $L^\rho(f) =
\|df\|_{\i}$.  Alternatively, let $\O = C(Z)$, the space of all
continuous, possibly unbounded, functions on $Z$, as a $C(X)$-bimodule
in the above way.  Then $d$ can be defined on all of $C(X)$ by the
above formula.  We can now consider the supremum norm on $C(Z)$,
taking value $+\i$ on unbounded functions (so a bit beyond our
definitions above), and again set $L^\rho(f) = \|df\|_{\i}$.
\end{example}

\begin{example}
\label{exam2.3}
Let us now consider some examples in which the normed unital algebra
$A$ may be non-commutative.  If $\O$ is an $A$-bimodule, one always
has the corresponding inner derivations.  That is, if $\o \in \O$ we
can set $d^{\o}(a) = \o a - a\o$.  If $\O$ is a normed $A$-bimodule
then $d^{\o}$ is continuous, with $\|d^{\o}\| \le 2\|\o\|$.  The
corresponding seminorm, $L^{\o}$, defined by $L^{\o}(a) =
\|d^{\o}(a)\|$, is then a continuous strongly Leibniz seminorm.

Suppose now that $B$ is a unital normed algebra and that $\pi$ is a
unital homomorphism from $A$ into $B$.  Then we can view $B$ as a
bimodule over $A$ in the evident way, and obtain inner derivations and
corresponding strongly Leibniz seminorms, which are continuous if
$\pi$ is.
\end{example}

\begin{example}
\label{exam2.4}
Suppose now that $\pi$ is a non-degenerate representation of $A$ as
operators on a normed space $X$, so that $\pi$ can be viewed as a
unital homomorphism from $A$ into $B(X)$, the algebra of bounded
operators on $X$.  Then $B(X)$ can be viewed in the evident way as a
bimodule over $A$, and any element, $D$, of $B(X)$ determines an inner
derivation, and corresponding seminorm
\[
L(a) = \|D\pi(a) - \pi(a)D\| = \|[D,\pi(a)]\|.
\]

More generally, if one has two representations, $\pi^1$ and $\pi^2$ of
$A$ on $X$, then one can view $B(X)$ as an $A$-bimodule via
\[
a \cdot T \cdot b = \pi^1(a)T\pi^2(a),
\]
and again any element, $D$, of $B(X)$ will determine an inner
derivation.  (The twisted commutators in equation $2.4$ and lemma
$2.2$ of \cite{CnM} fit into this view, except that there $D$ is
usually an unbounded operator.)  Alternately one can assemble $\pi^1$
and $\pi^2$ into one representation on $X \oplus X$, and use the
operator $\begin{pmatrix} 0 & D \\ D & 0 \end{pmatrix}$ on $X \oplus
X$.

As an important particular case, for $X$ we can take $A$ itself and
let $\pi$ be the left-regular representation of $A$ on itself.  As
element of $B(X)$ we can take an isometric algebra automorphism, $\a$,
of $A$.  Then
\begin{align*}
(\a \circ \pi(a) - \pi(a) \circ \a)(b) &= \a(ab) - a\a(b) \\
&= (\a(a) - a)\a(b).
\end{align*}
From this we see that
\[
\|\a \circ \pi(a) - \pi(a) \circ \a\| = \|\a(a) - a\|,
\]
so that if we set $L(a) = \|\a(a)-a\|$ then $L$ will be a continuous
strongly Leibniz seminorm.  We can view this in another way.  View $A$
as a bimodule over $A$ by
\[
a \cdot b \cdot c = ab\a(c),
\]
and set $d(a) = \a(a) - a$.  It is easily checked that $d$ is a
(continuous) derivation, and so from Proposition~{\ref{prop2.1}} we
see again that $L$ is strongly Leibniz. (This does not require
that $\a$ be isometric.)
\end{example}

\begin{example}
\label{exam2.5}
Now let $G$ be a group, and let $\a$ be an action of $G$ on $A$, that
is, a homomorphism from $G$ into $\Aut(A)$.  Let $\ell$ be a
length-function on $G$.  For each $x \in G$ with $x \ne e_G$ the map
$a \mapsto \|\a_x(a)-a\|/\ell(x)$ is a continuous strongly Leibniz
seminorm.  Let $L$ be the supremum over $G$ of all of these seminorms,
so that
\[
L(a) = \sup\{\|\a_x(a)-a\|/\ell(x): x \ne e_G\}.
\]
By Proposition~\ref{prop1.2} we see that $L$ is a
lower-semicontinuous strongly-Leibniz seminorm.  Of course $L$ may not
be semifinite.  But if $G$ is a locally compact group, if $A$ is
complete, so a Banach algebra, if $\a$ is a strongly continuous action
by isometric automorphisms of $A$, and if $\ell$ is a continuous
length-function, then the discussion before theorem~$2.2$ of
\cite{R19} shows that $L$ is semifinite.  The discussion there is
stated just for $C^*$-algebras, but it applies without change to
Banach algebras.
\end{example}

\begin{example}
\label{exam2.6}
Suppose now that $G$ is a connected Lie group, and that $\a$ is a
strongly continuous action of $G$ on $A$ by isometric automorphisms.
Let $\fG$ denote the Lie algebra of $G$, and let $A^{\i}$ denote the
dense subalgebra of smooth elements of $A$ for the action $\a$.  We
let $\a$ also denote the corresponding infinitesimal action of $\fG$
on $A^{\i}$, defined by
\[
\a_X(a) = \left. \frac {d}{dt}\right|_{t=0} \a_{\exp(tX)}(a)
\]
for $X \in \fG$ and $a \in A^{\i}$.  The argument in the proof of
lemma~$3.1$ of \cite{R19} works here, and shows that
\[
\|\a_X(a)\| = \sup\{\|\a_{\exp(tX)}(a) - a\|/|t|: t \ne 0\}.
\]
It follows from Proposition~\ref{prop1.2} that the map $a \mapsto
\|\a_X(a)\|$ is a finite lower-semicontinuous strongly-Leibniz
seminorm on $A^{\i}$.  Suppose further that we are given a norm on
$\fG$, and that we set
\[
L(a) = \sup\{\|\a_X(a)\|: \|X\| \le 1\}.
\]
It follows again from Proposition~\ref{prop1.2} that $L$ is a
lower-semicontinuous strongly-Leibniz seminorm on $A^{\i}$, which is
easily seen to be finite, but which may well not be norm-continuous.
\end{example}

\begin{example}
\label{exam2.7}
Suppose now that $G$ is a Lie group and that $(U,\cH)$ is a strongly
continuous representation of $G$ on a Hilbert space $\cH$.  As
discussed in section~$3$ of \cite{R19} we can define an action, $\a$,
of $G$ on $\cB(\cH)$ by $\a_x(T) = U_xTU_x^*$, and we can let $B$ be
the largest subalgebra of $\cB(\cH)$ on which this action is strongly
continuous.  We can then apply the discussion of the previous example
to obtain a seminorm $L$ on $B^{\i}$.  If $A$ is a unital
$*$-subalgebra of $B^{\i}$ (which need not be carried into itself by
$\a$), then according to Proposition~\ref{prop1.4} the restriction of
$L$ to $A$ is a lower-semicontinuous strongly-Leibniz $*$-seminorm
which is clearly finite.
\end{example}

\begin{example}
\label{exam2.8}
Let us consider the above situation for the special case in which $\fG
= \bR$.  Then $U$ is generated by a self-adjoint (often unbounded)
operator, $D$, on $\cH$, that is, $U_t = e^{itD}$ for all $t \in \bR$.
Then it follows easily that for $T \in B^{\i}$
\[
L(T) = \|[D,T]\|,
\]
and in particular that the commutator $[D,T]$ is a bounded operator.
All of this will then be true for any $T \in A \subset B^{\i}$.  This
applies in particular to the ``Dirac'' operators on which Connes
\cite{C2,GVF} bases his approach to metric non-commutative
differential geometry.
\end{example}


\section{Closed seminorms}
\label{sec3}

We adapt here some definitions from section~$4$ of \cite{R5}.  Let $A$
be a normed unital algebra, and let ${\bar A}$ denote its completion.
Let $L$ be a seminorm on $A$ (value $+\i$ allowed) and let
\[
\cL_1 = \{a \in A: L(a) \le 1\}.
\]
Let ${\bar \cL}_1$ be the closure of $\cL_1$ in ${\bar A}$, and let
${\bar L}$ denote the corresponding ``Minkowski functional'' on ${\bar
A}$, defined by setting, for $c \in {\bar A}$,
\[
{\bar L}(c) = \inf\{r \in \bR^+: c \in r{\bar \cL}_1\}.
\]
The value $+\i$ must be allowed.  Then ${\bar L}$ is a seminorm on
${\bar A}$, and the proof of proposition~$4.4$ of \cite{R5} tells us
that if $L$ is lower semicontinuous, then ${\bar L}$ is an extension
of $L$.  We call ${\bar L}$ the {\em closure} of $L$.  We see
that the set $\{c \in {\bar A}: {\bar L}(c) \le 1\}$ is closed in
${\bar A}$.  We say that the original seminorm $L$ on $A$ is {\em
closed} if $\cL_1$ is closed in ${\bar A}$, or, equivalently, is
complete for the norm on $A$.  Clearly if $L$ is closed, then it is
lower semicontinuous.  If $L$ is closed and is not defined on all of
${\bar A}$, then ${\bar L}$ is obtained simply by giving it value
$+\i$ on all the elements of ${\bar A}$ that are not in $A$.  It is
clear that if $L$ is semifinite then so is ${\bar L}$. We recall that
a unital subalgebra $B$ of a unital algebra $A$ is said to be spectrally
stable in $A$ if for any $b \in B$ the spectrum of $b$ as an element
of $B$ is the same as its spectrum as an element of $A$, or equivalently,
that any $b$ that is invertible in $A$ is invertible in $B$.

\begin{proposition}
\label{prop3.1}
Let $L$ be a Leibniz seminorm on a normed unital
algebra $A$.  Then ${\bar L}$ is Leibniz. Set
\[
{\bar A}^f = \{c \in {\bar A}: {\bar L}(c) < \i\}.
\]
If $L(1) < \infty$, then ${\bar A}^f$ is a unital spectrally-stable subalgebra of the
norm closure of $\bar A^f$ in ${\bar
A}$. If $A$ is defined over $\bC$, then $\bar A^f$ is stable under the 
holomorphic-function calculus of its closure.
\end{proposition}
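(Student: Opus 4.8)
The plan is to prove the three assertions of Proposition~\ref{prop3.1} in the order stated, leaning on the extension/closure machinery already set up and on Proposition~\ref{prop1.3}. First I would show $\bar L$ is Leibniz. The natural approach is to pass through the closure process directly: for $c,c' \in \bar A$ with $\bar L(c), \bar L(c') < \i$, pick nets $a_i \to c$ and $a_i' \to c'$ in $A$ with $L(a_i) \to \bar L(c)$ and $L(a_i') \to \bar L(c')$ (possible since $\bar L$ is the Minkowski functional of $\bar{\cL}_1$, so sublevel sets of $\bar L$ are closures of sublevel sets of $L$, up to scaling). Then $a_i a_i' \to cc'$ in $\bar A$, and since $L$ is Leibniz, $L(a_i a_i') \le L(a_i)\|a_i'\| + \|a_i\| L(a_i')$. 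Taking limits, using norm-continuity of multiplication and $\|a_i\| \to \|c\|$, $\|a_i'\| \to \|c'\|$, and the fact that $\bar L(cc') \le \liminf_i L(a_i a_i')$ (because $\bar L$ is lower semicontinuous, being the Minkowski functional of a closed set), we get $\bar L(cc') \le \bar L(c)\|c'\| + \|c\|\bar L(c')$. When either $\bar L(c)$ or $\bar L(c')$ is $+\i$ the inequality is trivial. This simultaneously shows $\bar A^f$ is closed under multiplication.

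Next, assume $L(1) < \i$. Then $\bar L(1) \le L(1) < \i$ (the closure extends $L$ on the sublevel sets, or simply $1 \in A \subset$ the domain), so $1 \in \bar A^f$, i.e.\ $\bar A^f$ is a unital subalgebra of $\bar A$. Let $C$ denote the norm-closure of $\bar A^f$ in $\bar A$; this is a unital Banach algebra. I want $\bar A^f$ spectrally stable in $C$, equivalently: every $c \in \bar A^f$ invertible in $C$ is invertible in $\bar A^f$. The key observation is that $\bar L$ is a Leibniz seminorm on the unital normed algebra $C$ (Leibniz-ness just proven, restricted to $C$), that $L(1) < \infty$ gives $\bar L(1) < \infty$, and crucially that $\bar L$ arises as the closure of a \emph{strongly} Leibniz seminorm only if we know more — but in fact we do not need strong Leibniz here. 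Instead I would invoke the standard spectral-permanence argument: for a Leibniz seminorm with $\bar L(1) < \i$ on a Banach algebra $C$, if $c \in \bar A^f$ and $c$ is invertible in $C$, then the Neumann-series / holomorphic functional calculus representation of $c^{-1}$ lies in $\bar A^f$. Concretely, $\bar A^f$ is complete in the norm $\|\cdot\| + \bar L(\cdot)$ (this is where closedness of $\bar L$'s sublevel sets enters), hence $(\bar A^f, \|\cdot\| + \bar L(\cdot))$ is a Banach algebra continuously embedded in $C$, and a continuously-and-densely embedded Banach subalgebra that is a $Q$-algebra... — more carefully, I would use that $c^{-1}$ can be written via the contour integral $\frac{1}{2\pi i}\oint (\zeta - c)^{-1}\, d\zeta$ over a contour in the resolvent set \emph{of $C$}, and show the integrand stays in $\bar A^f$ with locally bounded $\bar L$, so the integral converges in $(\bar A^f, \|\cdot\|+\bar L)$. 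This is exactly the content of spectral stability, and it simultaneously delivers the third assertion about the holomorphic functional calculus over $\bC$: for $f$ holomorphic on a neighborhood of $\mathrm{sp}_C(c)$, $f(c) = \frac{1}{2\pi i}\oint f(\zeta)(\zeta - c)^{-1}\,d\zeta$ lies in $\bar A^f$ by the same convergence argument.

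The main obstacle is the completeness claim: that $\bar A^f$ is complete for the norm $\|\cdot\| + \bar L(\cdot)$, i.e.\ that $\bar L$ is a \emph{closed} seminorm on $\bar A$. This is precisely the payoff of passing to the closure: by construction $\{c \in \bar A : \bar L(c) \le 1\}$ is closed in $\bar A$ (it is $\bar{\cL}_1$, a closure), hence $\bar L$ is lower semicontinuous, and a lower-semicontinuous seminorm whose defining space $\bar A$ is complete yields a complete graph norm on $\bar A^f$ by a routine Cauchy-sequence argument (a $\|\cdot\|+\bar L$-Cauchy sequence is $\|\cdot\|$-Cauchy, converges in $\bar A$, and lower semicontinuity pins down the $\bar L$-value of the limit and forces $\bar L$-convergence). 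Once completeness is in hand, the resolvent-integral argument for spectral stability and holomorphic functional calculus is standard (e.g.\ the Arens–Royden / smooth-subalgebra style argument, or one can cite proposition~$4.4$ and the surrounding discussion of \cite{R5} together with Proposition~\ref{prop1.3} applied to $\bar L$ on $C$). I would be careful to state that ``spectrally stable in the norm closure of $\bar A^f$'' is the correct formulation since $\bar A^f$ need not be dense in all of $\bar A$, and that all the spectra in question are therefore computed in $C = \overline{\bar A^f}$, not in $\bar A$.
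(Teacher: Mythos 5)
Your proposal follows the same overall architecture as the paper's proof: approximate $c,d$ by sequences in $A$ with controlled $L$-values to get the Leibniz inequality for $\bar L$, then show $\bar A^f$ is complete for the graph norm $M = \|\cdot\| + \bar L$, observe $M$ is an algebra norm since $\bar L$ is Leibniz, and finally deduce spectral stability and holomorphic functional-calculus stability. The Leibniz step and the completeness step are essentially identical to the paper's.

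However, your treatment of the spectral stability step has a circularity. You propose to establish $c^{-1} \in \bar A^f$ by writing
$c^{-1} = \frac{1}{2\pi i}\oint_\gamma \zeta^{-1}(\zeta - c)^{-1}\, d\zeta$
and arguing that ``the integrand stays in $\bar A^f$''. But for $\zeta$ on the contour, $\zeta - c$ is itself an element of $\bar A^f$ that is invertible in $C$, and the claim that $(\zeta - c)^{-1} \in \bar A^f$ is precisely an instance of the spectral-stability assertion you are trying to prove. The paper avoids this by first establishing a weaker, noncircular fact via the Leibniz estimate $\bar L(c^n) \le n\|c\|^{n-1}\bar L(c)$: if $\|c\| < 1$ then $\sum c^n$ converges in $M$-norm, so $1-c$ is invertible in $\bar A^f$, hence any element with $\|1-c\| < 1$ is invertible in $\bar A^f$ (this makes $\bar A^f$ a so-called $Q$-algebra for $M$). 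Only then does the standard permanence lemma (lemma 3.38 of \cite{GVF}) upgrade near-$1$ invertibility to full spectral stability, and only after spectral stability is in hand is the contour-integral argument used to handle general holomorphic $f$. Your sketch mentions the Neumann series and the Q-algebra idea, so you clearly know the right tools are in the vicinity; but as written, the contour integral is being asked to do the spectral-stability work it cannot do by itself. You should split the argument as the paper does: Neumann series for the near-$1$ case first, then the permanence lemma, then the contour integral only for the holomorphic functional calculus.

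One further small point: you write ``the Neumann-series / holomorphic functional calculus representation of $c^{-1}$ lies in $\bar A^f$'' for $c$ merely invertible in $C$. The Neumann series $\sum_{n\ge 0}(1-c)^n$ only converges when $\|1-c\| < 1$, so it cannot represent $c^{-1}$ for general invertible $c$; this conflation is what leads into the circularity above.
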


\begin{proof}
Let $c,d \in {\bar A}$.  If ${\bar L}(c) = \i$ or ${\bar L}(d) = \i$
there is nothing to show for the Leibniz condition.  Otherwise, we can find sequences $\{a_n\}$
and $\{b_n\}$ in $A$ such that $\{a_n\}$ converge to $c$ while
$\{L(a_n)\}$ converges to $\bar L(c)$ and $L(a_n) \le \bar L(c)$ for all $n$,
and similarly for $\{b_n\}$ and $d$.  Then $a_nb_n$ converges to $cd$
and 
\[
L(a_nb_n) \le L(a_n)\|b_n\| + \|a_n\|L(b_n) \le \bar L(c)\|b_n\| +
\|a_n\| \bar L(d),
\]
and the right-hand side converges to $\bar L(c)\|d\| + \|c\| \bar L(d)$.
Thus $\bar L$ is Leibniz. 

If $L(1) < \infty$ so that $\bar A^f$ is a unital subalgebra of $\bar A$, 
it follows from proposition 3.12 of \cite{BlC}
(or proposition 1.7 and theorem 1.17 of \cite{Sch}
or lemma 1.6.1 of \cite{Cu}) that $\bar A^f$ is
spectrally stable in its closure in $\bar A$, and in fact is stable under the
holomorphic-function calculus there. We sketch the proof in our
simpler setting. Define a new norm, $M$, on $\bar A^f$ by
\[
M(c) = \|c\| + \bar L(c).
\]
Then, as mentioned after definition~$4.5$ of \cite{R5}, $\bar A^f$ will be
complete for the norm $M$ because $\bar L$ is closed.  
(See the proof of proposition~$1.6.2$ of
\cite{Wvr2}.)  Because $\bar L$ is Leibniz, $M$ is easily seen to be
an algebra norm, so that $\bar A^f$ becomes a Banach algebra.
Let $c \in \bar A^f$. From the Leibniz rule we find that $\bar L(c^n) \leq n\|c\|^{n-1} \bar L(c)$,
so that
\[
M(c^n) \leq \|c\|^n + n\|c\|^{n-1}\bar L(c).
\]
From this it follows that if $\|c\| < 1$ then the series 
$\sum_{n = 0}^\infty c^n$ converges for $M$ to an element of $\bar A^f$. 
Thus $1-c$ is invertible in $\bar A^f$. It follows that if instead $\|1-c\| < 1$ then
$c$ is invertible in $\bar A^f$. From this it is then easily seen (e.g. lemma 3.38 of \cite{GVF})
that if $c \in \bar A^f$ and if $c$ is invertible in the norm-closure of $\bar A^f$ in $\bar A$, 
then $c$ is invertible in $\bar A^f$. Consequently $\bar A^f$ is spectrally stable in its closure
in $\bar A$.

Assume now that $A$ is defined over $\bC$.
For the definition and properties of the holomorphic-function (or
``symbolic'') calculus see \cite{KdR, Rdn}. It is well-known that a
dense subalgebra that is spectrally stable and is a Banach algebra
for a norm stronger that the norm of the bigger algebra, is stable
under the holomorphic-function calculus. (See the comments after
definition 3.25 of \cite{GVF}.) We briefly recall the reason, for our context.
For notational simplicity we assume that $\bar A^f$ is dense in $\bar A$. 
Let $c \in {\bar A}^f$, and let $f$ be a $\bC$-valued
function defined and holomorphic on an open neighborhood $\cO$ of the
spectrum $\s_{\bar A}(c)$.  Let $\g$ be a union of a finite number of
curves in $\cO$ that surrounds $\s_{\bar A}(c)$ in the usual way such that
the Cauchy integral formula using $\g$ gives $f$ on a neighborhood of
$\s_{\bar A}(c)$.  Since ${\bar A}^f$ is spectrally stable in ${\bar A}$, 
the function $z \mapsto (z-c)^{-1}$,
well-defined on $\g$, has values in ${\bar A}^f$.
Since $\bar A^f$ is a Banach algebra for $M$, this function is
continuous for $M$, and the integral
\[
f(c) = \frac {1}{2\pi i} \int_{\g} f(z)(z-c)^{-1}dz
\]
is well defined in ${\bar A}^f$.  Since the homomorphism from ${\bar
A}^f$ with norm $M$ to $\bar A$ with its original norm is clearly
continuous, the image of $f(c)$ in $\bar A$ will be expressed by the
same integral but now interpreted in $\bar A$.  But $f(c) \in {\bar
A}^f$.  So the above integral, but interpreted in $\bar A$, gives an
element of ${\bar A}^f$ as was to be shown.
\end{proof}

For the use of the holomorphic-function calculus when dealing with
algebras over $\bR$ see proposition 2.4 of \cite{R17}.

One reason that the property of being closed under the
holomorphic-function calculus is important is that it implies that
${\bar A}^f$ and its closure, say $B$, in ${\bar A}$ have essentially the
same finitely-generated projective modules (``vector bundles'') in the
sense that any such right module $V$ for $B$ is of the form $V = W
\otimes_{{\bar A}^f} B$ for such a right module $W$ over ${\bar A}^f$,
unique up to isomorphism. This is crucial to
\cite{R17}, and to our proposed discussion of projective modules and
quantum Gromov--Hausdorff distance for non-commutative $C^*$-algebras. 
The inclusion of ${\bar A}^f$ into $B$ also
gives an isomorphism of their $K$-groups.  (See appendix~IIIC of
\cite{C2} and theorem~$3.44$ of \cite{GVF}.)  

\begin{proposition}
\label{prop3.2}
Let $L$ be a strongly-Leibniz seminorm on a
normed algebra $A$. Assume that $A^f$ is dense and spectrally stable in $\bar A$.
Then the closure, ${\bar L}$, of $L$ is strongly Leibniz.
\end{proposition}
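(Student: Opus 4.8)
The plan is to check the two conditions beyond the plain Leibniz inequality, since Leibniz-ness of $\bar L$ is already supplied by Proposition~\ref{prop3.1}. First I would record the elementary fact that $\bar L(b) \le L(b)$ for every $b \in A$, which is immediate from the construction of the closure (if $L(b) = t > 0$ then $b/t \in \cL_1 \subseteq \bar{\cL}_1$; if $L(b) = 0$ then $b \in r\bar{\cL}_1$ for all $r > 0$). Since $L$ is strongly Leibniz, $L(1) = 0$, so this gives $\bar L(1) = 0$ at once. It then remains only to prove, for $c \in \bar A$ that is invertible in $\bar A$ and has $\bar L(c) < \i$, that $\bar L(c^{-1}) \le \|c^{-1}\|^2 \bar L(c)$ (there being nothing to show when $\bar L(c) = \i$).

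The heart of the argument is approximation by \emph{invertible} elements of $A$. Set $t = \bar L(c)$. For each $m$, the definition of $\bar L$ gives $c \in (t+1/m)\bar{\cL}_1$, so by picking an element of $\cL_1$ within $1/m$ of $c/(t+1/m)$ and rescaling I obtain $a_m \in A$ with $L(a_m) \le t+1/m$ and $\|a_m - c\| < (t+1/m)/m$; in particular $a_m \to c$ in $\bar A$, and since $L(a_m) < \i$ each $a_m$ lies in $A^f$. Because inversion is continuous on the invertibles of the Banach algebra $\bar A$, for all large $m$ the $a_m$ are invertible in $\bar A$ with $a_m^{-1} \to c^{-1}$ (and hence $\|a_m^{-1}\| \to \|c^{-1}\|$). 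This is the point at which I use the hypothesis: since $A^f$ is spectrally stable in $\bar A$ and $a_m \in A^f$, the element $a_m$ is actually invertible in $A^f$, hence in $A$, with $a_m^{-1} \in A^f$. The strong Leibniz inequality for $L$ on $A$ therefore yields $L(a_m^{-1}) \le \|a_m^{-1}\|^2 L(a_m)$ for all large $m$.

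To conclude I would pass to the limit using that $\bar L$ is lower semicontinuous (each of its sublevel sets being closed in $\bar A$ by construction). Applying lower semicontinuity to $a_m^{-1} \to c^{-1}$, then $\bar L \le L$ on $A$, and then the displayed inequality,
\[
\bar L(c^{-1}) \le \liminf_m \bar L(a_m^{-1}) \le \liminf_m L(a_m^{-1}) \le \liminf_m \|a_m^{-1}\|^2 L(a_m) \le \liminf_m \|a_m^{-1}\|^2 (t+1/m) = \|c^{-1}\|^2 \bar L(c),
\]
which is exactly what is wanted. I expect the only genuinely delicate step to be the middle one: to apply the strong Leibniz property of $L$ I need the approximants $a_m$ to be invertible in $A$ itself and not merely in the completion, and it is precisely spectral stability of $A^f$ in $\bar A$ --- together with the automatic membership $a_m \in A^f$ coming from $L(a_m) < \i$ --- that delivers this; everything else is routine lower-semicontinuity bookkeeping. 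As a side remark, density of $A^f$ in $\bar A$ together with Proposition~\ref{prop3.1} already makes $\bar A^f$ spectrally stable in $\bar A$, so that $\bar L(c^{-1})$ is finite a priori, but that is in any case subsumed by the inequality just established.
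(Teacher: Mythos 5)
Your proof is correct and follows essentially the same route as the paper's: approximate $c$ by a sequence $a_m \in A^f$ with controlled $L$-values, use openness of the invertibles in $\bar A$ together with spectral stability of $A^f$ to make the $a_m$ invertible in $A^f$, apply the strong Leibniz inequality for $L$, and pass to the limit via lower semicontinuity of $\bar L$. The only cosmetic difference is that the paper arranges $L(a_n) \le \bar L(c)$ exactly (the Minkowski ball $\bar{\cL}_1$ is closed, so the infimum is attained), whereas you settle for $L(a_m) \le \bar L(c) + 1/m$; both versions pass to the limit without trouble, and your explicit $\liminf$ chain is a clean way to make the final step precise.
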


\begin{proof}
It is clear that ${\bar L}(1) = 0$.  From Proposition~\ref{prop3.1} we
know that ${\bar L}$ is Leibniz. Thus we only need to verify the
condition on inverses. Suppose now that $c \in {\bar A}$
and that $c$ is invertible in ${\bar A}$. If ${\bar L}(c) = \i$ there
is nothing to show, so assume that $c \in {\bar A}^f$.  Then there is
a sequence $\{a_n\}$ in $A$ that converges to $c$ while $\{L(a_n)\}$
converges to ${\bar L}(c)$ with $L(a_n) \le {\bar L}(c)$ for all $n$ (so $a_n \in A^f$).
Since $c$ is invertible in ${\bar A}$, and the set of invertible
elements of a unital Banach algebra is open, the elements $a_n$ are
eventually invertible in ${\bar A}$.  
Since $A^f$ is assumed to be 
spectrally stable in $\bar A$ 
the elements $a_n$ are eventually
invertible in $A^f$.  Thus we can adjust the sequence $\{a_n\}$ so that
each element is invertible in $A^f$.  Then the sequence $\{a_n^{-1}\}$
converges to $c^{-1}$, while for each $n$
\[
L(a_n^{-1}) \le \|a_n^{-1}\|^2L(a_n) \le \|a_n^{-1}\|^2{\bar L}(c).
\]
It follows easily that ${\bar L}(c^{-1}) \le \|c^{-1}\|^2{\bar L}(c)$.
Thus ${\bar L}$ is strongly Leibniz.
\end{proof}


\section{$C^*$-metrics}
\label{sec4}

Up to this point we have ignored the crucial analytic property of the
seminorms that define quantum metric spaces, i.e., $\Lip$-norms.  We
recall this property here, for our special context of unital
$C^*$-algebras.  Let $A$ be a unital $*$-algebra equipped with a
$C^*$-norm (but not assumed to be complete).  
Let $L$ be a seminorm on
$A$ such that $L(1) = 0$.  Define a metric, $\rho_L$, on the state
space, $S(A)$, of $A$ by
\[
\rho_L(\mu,\nu) = \sup\{|\mu(a)-\nu(a)|: a = a^* \mbox{ and } L(a) \le
1\}.
\]
(Without further hypotheses $\rho_L$ might take the value $+\i$.)  We
will say that $L$ is a $\Lip$-norm if the topology on $S(A)$ from
$\rho_L$ coincides with the weak-$*$ topology on $S(A)$.  In our
definition of $\Lip$-norms in definition~$2.1$ of \cite{R6} we, in
effect, assumed that our seminorms $L$ were defined only on the
self-adjoint part of $A$, but still defined $\rho_L$ as above.  The
comments before definition~$2.1$ of \cite{R6} show that if $L$ is a
$*$-seminorm then $\rho_L$ would not change if the condition ``$a =
a^*$'' above were omitted.

We now come to the definition that seems to be dictated by our
investigation of vector bundles and Gromov--Hausdorff distance, both
for ordinary metric spaces \cite{R17} and for quantum ones. It
should be viewed as tentative, since future experience may require
additional hypotheses.

\begin{definition}
\label{def4.1}
Let $A$ be a unital $C^*$-normed algebra and let $L$ be a seminorm on
$A$ (possibly taking value $+\i$).  We will say that $L$ is a {\em
$C^*$-metric} on $A$ if
\begin{itemize}
\item[{\em a)}] $L$ is a lower-semicontinuous strongly-Leibniz
$*$-seminorm,
\item[{\em b)}] $L$ (restricted to $A^{sa}$) is a $\Lip$-norm,
\item[{\em c)}] $A^f$ is spectrally stable in the completion, ${\bar
A}$, of $A$.
\end{itemize}
By a {\em compact $C^*$-metric space} we mean a pair $(A,L)$
consisting of a unital $C^*$-normed algebra $A$ and a $C^*$-metric $L$
on $A$.
\end{definition}

In using the word ``space'' above, we should logically be referring to
objects in the dual to the category of unital $C^*$-algebras.  But we
will not make this distinction explicit during our discussions in this
paper.

We need condition c) in Definition \ref{def4.1} so that we can apply
Proposition \ref{prop3.2} to conclude that the closure of a $C^*$-metric 
is strongly Leibniz and itself satisfies condition c). Hanfeng Li has
pointed out to me that the subalgebra of polynomials in the algebra of 
continuous functions on the unit interval with the usual Lipschitz 
seminorm shows that condition c) is independent of conditions a) and b).

At this time it is not clear to me how best to define $C^*$-metric
spaces that are locally compact but not compact, though some
substantial indications can be gleaned from the results in \cite{Ltr}.

Recall \cite{BlC} that a $*$-subalgebra $B$ of a $C^*$-algebra $A$ is said
to be stable under the $C^2$-function calculus for self-adjoint elements
if for any $b \in B$ with $b^* = b$ and any twice continuously
differentiable function $f$ on $\bR$, the element $f(b)$ of
$A$, defined by the continuous-function calculus on self-adjoint
elements of $A$, is in fact again in $B$.

\begin{proposition}
\label{prop4.2}
Every $C^*$-metric on a unital $C^*$-normed algebra is semifinite.
Let $L$ be a $C^*$-metric on a unital $C^*$-normed algebra $A$, and
let $\bar L$ be its closure on the completion $\bar A$ of $A$ (so
$\bar L$ is an extension of $L$). Then $\bar L$ is a $C^*$-metric. Let
$\bar A^f$ be defined as earlier (so now $\bar A^f$ is dense in
$\bar A$). Then $\bar A^f$ is stable both under the holomorphic-function
calculus of $\bar A$ and the $C^2$-calculus on self-adjoint
elements of $\bar A$. 
\end{proposition}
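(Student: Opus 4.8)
The plan is to deduce the four assertions in order, leaning on Propositions~\ref{prop3.1} and~\ref{prop3.2}, on the behaviour of $\Lip$-norms under passage to the closure, and on the theory of differential seminorms of \cite{BlC}; the only step needing a genuinely new argument is semifiniteness, so I would prove that first. Recall that for a $C^*$-metric $L$ the set $A^f = \{a : L(a) < \i\}$ is a unital $*$-subalgebra of $A$ (since $L$ is a Leibniz $*$-seminorm), and that every self-adjoint $a$ with $L(a) \le 1$ lies in $A^f$. Suppose the norm-closure of $A^f$ were a proper subspace of $A$. By Hahn--Banach there is a nonzero bounded functional annihilating $A^f$; as $A^f$ is a $*$-subalgebra it may be taken self-adjoint, and since it annihilates $1 \in A^f$ its Jordan decomposition $\var = \var_+ - \var_-$ has $\var_+(1) = \var_-(1)$. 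Neither part is zero (a positive functional annihilating $1$ is $0$), so after normalization we obtain two distinct states $\mu,\nu$ of $A$ agreeing on $A^f$, hence on every self-adjoint $a$ with $L(a) \le 1$, so that $\rho_L(\mu,\nu) = 0$. But $\rho_L$ is a genuine metric because $L$ restricted to $A^{sa}$ is a $\Lip$-norm, forcing $\mu = \nu$, a contradiction; therefore $A^f$ is norm-dense and $L$ is semifinite. (The real-scalar case is identical.)

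Next I would check the three conditions for $\bar L$ to be a $C^*$-metric on $\bar A$. Condition~a): $\bar L$ is closed by construction and hence lower semicontinuous; its unit ball ${\bar \cL}_1$ is the closure of the $*$-invariant set $\cL_1$, so $\bar L$ is a $*$-seminorm; and $\bar L$ is strongly Leibniz by Proposition~\ref{prop3.2}, whose two hypotheses are now available, namely that $A^f$ is dense in $\bar A$ (the previous paragraph) and that $A^f$ is spectrally stable in $\bar A$ (condition~c) for $L$). Condition~b): the states of $\bar A$ restrict to the states of $A$ bijectively and weak-$*$-homeomorphically, and the self-adjoint unit ball $\{c = c^* : \bar L(c) \le 1\}$ is the norm-closure in $\bar A$ of $\{a = a^* : L(a) \le 1\}$ (using that $L$ is a $*$-seminorm), so by continuity the supremum defining $\rho_{\bar L}$ coincides with the one defining $\rho_L$; hence $\rho_{\bar L}$ metrizes the weak-$*$ topology on $S(\bar A)$, which says that $\bar L$ is a $\Lip$-norm (the invariance of the $\Lip$-norm property under closure, as in \cite{R5}). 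Condition~c): since $L$ is lower semicontinuous, $\bar L$ extends $L$, so $A^f \subseteq \bar A^f$ and $\bar A^f$ is dense in $\bar A$; then Proposition~\ref{prop3.1}, applied with $\bar L(1) = 0 < \i$, shows that $\bar A^f$ is spectrally stable in its norm-closure in $\bar A$, which is all of $\bar A$.

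Finally I would treat the two functional calculi on $\bar A^f$. Stability under the holomorphic-function calculus of $\bar A$ is the last conclusion of Proposition~\ref{prop3.1} (its ``closure'' being $\bar A$ itself), with the real-scalar case handled as in proposition~2.4 of \cite{R17}. For the $C^2$-calculus on self-adjoint elements, I would equip $\bar A^f$ with $M(c) = \|c\| + \bar L(c)$; by the proof of Proposition~\ref{prop3.1} this makes $\bar A^f$ complete, hence a Banach algebra, dense in $\bar A$, and since $\bar A$ is a $C^*$-algebra and $\bar L$ a $*$-seminorm, $M$ is a $*$-norm. Thus $(\bar A^f, M)$ is a dense Banach $*$-subalgebra of the $C^*$-algebra $\bar A$, complete under a norm of the form $\|\cdot\| + \bar L$ with $\bar L$ a strongly Leibniz $*$-seminorm; such subalgebras fall under the theory of (first-order) differential seminorms of \cite{BlC}, and the relevant result there yields stability under the $C^2$-function calculus for self-adjoint elements.

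The step I expect to be the main obstacle is semifiniteness: everything else is an assembly of Propositions~\ref{prop3.1} and~\ref{prop3.2} with standard facts about closures and the cited results of \cite{BlC}, whereas semifiniteness is the place where the purely metric hypothesis~b) must be converted into the density of the algebraic object $A^f$. The Hahn--Banach/Jordan-decomposition argument above is the crux, and the details needing care are the non-vanishing of both Jordan components and the verbatim transfer of the argument to algebras over $\bR$.
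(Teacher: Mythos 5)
Your argument is essentially the same as the paper's: semifiniteness via Hahn--Banach together with the metric-degeneracy contradiction (you reconstruct, via the Jordan decomposition, what the paper obtains from lemma~2.1 of \cite{R5}), and the remaining claims assembled from Propositions~\ref{prop3.1} and \ref{prop3.2}, proposition~4.4 of \cite{R5} for passing the $\Lip$-norm property to the closure, and \cite{BlC} for the $C^2$-calculus. The one place you are vaguer than the paper is the last step: the paper pins down the $C^2$-calculus result to proposition~6.4 of \cite{BlC}, whereas you only gesture at ``the relevant result there''; otherwise the proof matches.
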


\begin{proof}
Let $L$ be a $C^*$-metric on a unital $C^*$-normed algebra $A$, and
let $A^f$ be defined as above.  Suppose that $A^f$ is not dense in
$A$.  Then it is easily seen that there is an $a \in
A$ with $a^* = a$ that is not in the closure of $A^f$.  By the
Hahn--Banach theorem there is a linear functional of norm~$1$ on the
self-adjoint part of $A$ that has value $0$ on all of the self-adjoint
part of $A^f$.  From lemma~$2.1$ of \cite{R5} it then follows that
there are two distinct states of $A$ which agree on $A^f$.  Then the
distance between these two states for the metric $\rho_L$ determined
by $L$ is $0$, which contradicts the requirement that the topology on
$S(A)$ determined by $\rho_L$ coincides with the weak-$*$ topology.

The fact that $\bar L$ is a $C^*$-metric is seen as follows. By
definition, $\bar L$ is closed so lower semi-continuous. As remarked
above, $\bar L$ is strongly Leibniz by condition c) and
Proposition \ref{prop3.2}. The closure of a Lip-norm is again a
Lip-norm, giving the same metric on the state-space, as seen
in proposition 4.4 of \cite{R5}. That $\bar L$ satisfies condition c)
follows from Proposition \ref{prop3.1}.

The fact that $\bar A^f$ is stable under the holomorphic-function
calculus of $\bar A$ follows immediately from the semifiniteness of
$\bar L$ and Proposition \ref{prop3.1}. The fact that $\bar A^f$ is
stable for the $C^2$-function calculus on self-adjoint elements
of $\bar A$ follows quickly from proposition 6.4 of \cite{BlC}, which
actually gives a slightly stronger fact.
\end{proof}

The condition that $L$ be a $\Lip$-norm is often a difficult one to
verify for various specific examples.  But most of the $\Lip$-norms
that have been constructed on $C^*$-algebras so far are in fact
$C^*$-metrics.  We explain this now for several of the classes of
examples described in sections~$2$ and $3$ of \cite{R19}.

\begin{example}
\label{exam4.3}
Let $A$ be a unital $C^*$-algebra, let $G$ be a compact group, and let
$\a$ be an action of $G$ on $A$ that is ergodic in the sense that if
an $a \in A$ satisfies $\a_x(a) = a$ for all $x \in G$ then $a \in \bC
1_A$.  Let $\ell$ be a continuous length function on $G$, and define a
seminorm $L$ on $A$, as in Example~\ref{exam2.5}, by
\[
L(a) = \sup\{\|\a_x(a)-a\|/\ell(x): x \notin e_G\}.
\]
It is shown in \cite{R19} that $L$ is a $\Lip$-norm.  But we saw in
Example~\ref{exam2.5} that $L$ is lower semicontinuous and strongly
Leibniz. Since $L$ is defined on all of $A$, the spectral stability
of $\bar A^f$ in $A$ follows from Proposition \ref{prop3.1}.
\end{example}

The next several examples involve ``Dirac'' operators in various
settings.

\begin{example}
\label{exam4.4}
This class of examples is the main class discussed in Connes's first
paper \cite{Cn7} on metric aspects of non-commutative geometry.  It is
discussed briefly as example~$3.6$ of \cite{R19}.  Let $G$ be a
discrete group and let $A = C_r^*(G)$ be its reduced group
$C^*$-algebra acting on $\ell^2(G)$.  Let $\ell$ be a length function
on $G$.  As Dirac operator take the operator $D = M_{\ell}$ of
pointwise multiplication by $\ell$ on $\ell^2(G)$.  The one-parameter
unitary group generated by $D$ simply sends $t$ to the operator of
pointwise multiplication by the function $e^{it\ell}$.  We are then in
the context of Examples~\ref{exam2.7} and \ref{exam2.8}.  It is easily
seen that the dense subalgebra $C_c(G)$ of functions of finite support
is in the smooth algebra $B^{\i}$ for the action of $G$ on
$\cL(\ell^2(G))$.  As in Example~\ref{exam2.8} we thus obtain a
lower-semicontinuous strongly-Leibniz semifinite $*$-seminorm on $A$,
which for any $f \in C_c(G)$ is given by
\[
L(f) = \|[D,f]\|.
\]
From Proposition~\ref{prop3.1} it follows that $A^f$ (for the closure
of $L$) is stable for the
holomorphic-function calculus.  But for stupid length functions $L$
can fail to be a $\Lip$-norm, and it is not easy to see when it is a
$\Lip$-norm, and thus a $C^*$-metric.  In \cite{R18}, by means of a
long and interesting argument, it is shown that $L$ is a $\Lip$-norm,
and thus a $C^*$-metric,
for $G = \bZ^d$ (and even for the twisted group algebra
$C^*(\bZ^d,\g)$ where $\g$ is a bicharacter on $\bZ^d$) when $\ell$ is
either a word-length function or the restriction to $\bZ^d$ of a norm
on $\bR^d$.  In \cite{OzR} it is shown, by techniques entirely
different from those used for the case of $\bZ^d$, that if $G$ is a
hyperbolic group and $\ell$ is a word-length function on $G$ then $L$
is a $\Lip$-norm, and thus a $C^*$-metric.  For other classes of
infinite discrete groups, e.g., nilpotent ones, it remains a mystery
as to whether $L$ is a $\Lip$-norm if $\ell$ is a word-length
function.  Some related examples can be found in \cite{AnC}.
\end{example}

\begin{example}
\label{exam4.5}
Let $\a$ be an action of the $d$-dimensional torus $\bT^d$, $d \ge 2$,
on a unital $C^*$-algebra $A$.  In \cite{R9} it is shown that for any
skew-symmetric real $d \x d$ matrix $\th$ one can deform the product
on $A$ to get a new $C^*$-algebra, $A_{\th}$.  Connes and Landi
\cite{CnL} show that when $M$ is a compact spin Riemannian manifold
and $\a$ is a smooth action of $\bT^d$ on $M$, so on $A = C(M)$,
leaving the Riemannian metric invariant, and lifting to the spin
bundle, then there is a natural Dirac operator for the (usually
non-commutative) deformed algebra $A_{\th}$.  As in
Examples~\ref{exam2.8} and \ref{exam4.3}, this Dirac operator
determines a $*$-seminorm, $L$, on $A_{\th}$ which is lower
semicontinuous, strongly Leibniz, and semifinite. 
Hanfeng Li \cite{Lih1} showed that $L$ is a $\Lip^*$-norm.
Thus $L$ is a $C^*$-metric.
\end{example}


\section{Quotient seminorms and proximity}
\label{sec5}

We now try to modify the definition of quantum Gromov--Hausdorff
distance so as to use the above definition of $C^*$-metrics.  This
involves quotient seminorms, so we begin by exploring them.  There are
at least three difficulties that confront us, namely that the quotient
of a Leibniz seminorm may not be Leibniz, that the quotient of a
strongly Leibniz seminorm, even if it is Leibniz, may not be strongly
Leibniz, and that reasonable $*$-seminorms can agree on self-adjoint
elements but still be distinct.  We begin by considering the first
difficulty.

Let $L$ be a Leibniz seminorm on a unital normed algebra $C$, and let
$\pi: C \twoheadrightarrow  A$ be a unital homomorphism from $C$ onto
a unital normed algebra $A$.  Let ${\tilde L}^A$ be the quotient
seminorm on $A$, defined by
\[
{\tilde L}^A(a) = \inf\{L(c): c \in C \mbox{ and } \pi(c) = a\}.
\]
It is known \cite{BlC} that ${\tilde L}^A$ need not be Leibniz.  (See
also lemma~$4.3$ of \cite{Lih3} and the comments just before it.)  But
the situation can be partly rescued by the following definition.

\begin{definition}
\label{def5.1}
Let $C$, $A$, $\pi$ and $L$ be as above, and assume that $\pi$ is norm
non-increasing.  We say that $L$ is {\em $\pi$-compatible} if for
every $a \in A$ and every $\e > 0$ there is a $c \in C$ such that
$\pi(c) = a$ and simultaneously
\[
L(c) \le {\tilde L}^A(a) + \e \qquad \mbox{ and } \qquad \|c\| \le
\|a\| + \e.
\]
\end{definition}

\begin{proposition}
\label{prop5.2}
Let $C$, $A$, $\pi$ and $L$ be as above.  If $L$ is $\pi$-compatible
then the norm on $A$ coincides with the quotient norm from $C$, and
${\tilde L}^A$ is Leibniz.
\end{proposition}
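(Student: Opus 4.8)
The plan is to establish the two assertions of Proposition~\ref{prop5.2} in turn, both as direct consequences of the $\pi$-compatibility hypothesis. First I would show that the norm on $A$ agrees with the quotient norm. Since $\pi$ is assumed norm non-increasing, we automatically have $\|a\| \le \inf\{\|c\|: \pi(c) = a\}$ for every $a \in A$. For the reverse inequality, fix $a \in A$ and $\e > 0$; by $\pi$-compatibility there is a $c \in C$ with $\pi(c) = a$ and $\|c\| \le \|a\| + \e$, so the quotient norm of $a$ is at most $\|a\| + \e$. Letting $\e \to 0$ gives that the quotient norm does not exceed $\|a\|$, and the two norms coincide.

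Next I would prove that $\tilde L^A$ is Leibniz. Let $a, b \in A$ and let $\e > 0$ be given. Using $\pi$-compatibility, choose $c, c' \in C$ with $\pi(c) = a$, $\pi(c') = b$, and simultaneously $L(c) \le \tilde L^A(a) + \e$, $\|c\| \le \|a\| + \e$, $L(c') \le \tilde L^A(b) + \e$, $\|c'\| \le \|b\| + \e$. Since $\pi$ is a homomorphism, $\pi(cc') = ab$, so $\tilde L^A(ab) \le L(cc')$. Now apply the Leibniz property of $L$ on $C$:
\[
\tilde L^A(ab) \le L(cc') \le L(c)\|c'\| + \|c\|L(c') \le (\tilde L^A(a) + \e)(\|b\| + \e) + (\|a\| + \e)(\tilde L^A(b) + \e).
\]
As $\e \to 0$ the right-hand side tends to $\tilde L^A(a)\|b\| + \|a\|\tilde L^A(b)$, which yields the Leibniz inequality for $\tilde L^A$. (One should note that if either $\tilde L^A(a)$ or $\tilde L^A(b)$ is infinite there is nothing to prove, and that the $\e$-perturbations in the norm factors cause no difficulty since all quantities involved are finite once we are in the nontrivial case; the product $(\tilde L^A(a)+\e)(\|b\|+\e)$ expands to $\tilde L^A(a)\|b\|$ plus terms each carrying a factor of $\e$.)

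I expect no serious obstacle here: the whole point of Definition~\ref{def5.1} is that it supplies, for a single fixed $a$ (and independently for $b$), one preimage that is near-optimal for $L$ and for $\|\cdot\|$ \emph{at the same time}, which is exactly what is needed to push a product of preimages through the Leibniz inequality for $L$. The only mild subtlety worth a sentence in the write-up is that $\pi$-compatibility is a statement about each element separately, so we are free to apply it once to $a$ and once to $b$ and then multiply the chosen preimages; no joint or uniform choice is required. The argument for the norm equality is even more immediate. Thus the proof is short and the hypothesis is tailored precisely to make it work.
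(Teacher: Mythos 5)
Your proof is correct and follows essentially the same route as the paper: pick $\pi$-compatible preimages $c,d$ of $a,b$, apply the Leibniz inequality for $L$ to $cd$, and let $\e\to 0$. The paper merely says the norm statement is "easily verified," and your spelled-out argument for it is the intended one.
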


\begin{proof}
The statement about the norms is easily verified.  Suppose now that
$a,b \in A$ and $\e > 0$ are given.  Since $L$ is $\pi$-compatible, we
can find $c,d \in C$ such that $\pi(c) = a$ and $\pi(d) = b$ and the
conditions of Definition~\ref{def5.1} are satisfied.  Then $\pi(cd) =
\pi(ab)$, and so
\begin{align*}
{\tilde L}^A(ab) &\le L(cd) \le L(c)\|d\| + \|c\|L(d) \\
&\le ({\tilde L}^A(a) + \e)(\|b\| + \e) + (\|a\| + \e)({\tilde L}^A(b)
+ \e).
\end{align*}
Since $\e$ is arbitrary, we see that ${\tilde L}^A$ is Leibniz.
\end{proof}

However, I do not know of a useful way to partly rescue the difficulty
that if $L$ is strongly Leibniz and ${\tilde L}^A$ is Leibniz there
seems to be no reason that ${\tilde L}^A$ need be strongly Leibniz
(though I do not have an example showing this difficulty).

We now consider the third difficulty.  It is quite instructive to
first consider ordinary metric spaces.  For this purpose
$\pi$-compatibility is useful.

\begin{proposition}
\label{prop5.3}
Let $(Z,\rho)$ be a compact metric space, and let $C = C(Z)$ be its
$C^*$-algebra of continuous complex-valued functions.  Let $X$ be a
closed subset of $Z$, let $A = C(X)$, and let $\pi: C \to A$ be the
usual restriction homomorphism.  Then the Leibniz seminorm $L^{\rho}$
for $\rho$ is $\pi$-compatible.
\end{proposition}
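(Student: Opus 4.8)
The plan is to verify $\pi$-compatibility directly: given $f \in A = C(X)$ and $\e > 0$, we must produce $g \in C = C(Z)$ with $g|_X = f$, $L^\rho(g) \le \tilde L^A(f) + \e$, and $\|g\|_\infty \le \|f\|_\infty + \e$. The natural candidate is an optimal Lipschitz extension of $f$ from $X$ to $Z$ — but one must extend with control on \emph{both} the Lipschitz constant and the supremum norm simultaneously. First I would dispose of the trivial cases: if $\tilde L^A(f) = \i$ there is nothing to prove (indeed one can take any continuous extension, e.g.\ by Tietze), and if $f$ is constant, that constant extends to $Z$ with the same (zero) Lipschitz seminorm and the same norm. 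So assume $0 < \tilde L^A(f) < \i$, i.e.\ $f$ is genuinely Lipschitz on $X$ with some finite Lipschitz constant; note that since $\pi$ is onto and $L^\rho$ restricted to $X$ is just the $\rho|_X$-Lipschitz seminorm, $\tilde L^A(f)$ equals the Lipschitz constant $\Lip_\rho(f)$ of $f$ on $(X,\rho|_X)$.

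The key step is the classical McShane--Whitney extension: for $f$ Lipschitz on $X$ with constant $\l = \Lip_\rho(f)$, the function
\[
\hat f(z) = \inf_{x \in X}\bigl( f(x) + \l\,\rho(z,x) \bigr)
\]
is a continuous extension of $f$ to all of $Z$ with $\Lip_\rho(\hat f) = \l$. This handles the seminorm requirement exactly, with no $\e$ to spare; the only remaining issue is the supremum norm, since $\hat f$ could a priori overshoot $\|f\|_\infty$. To fix this, compose with a truncation: let $T$ be the retraction of $\bR$ onto the interval $[-\|f\|_\infty, \|f\|_\infty]$ (i.e.\ $T(t) = \max(-\|f\|_\infty, \min(\|f\|_\infty, t))$), which is $1$-Lipschitz and fixes $[-\|f\|_\infty, \|f\|_\infty]$ pointwise. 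Then $g = T \circ \hat f$ is continuous on $Z$, agrees with $f$ on $X$ (because $|f| \le \|f\|_\infty$ there, so $T$ acts as the identity), satisfies $\|g\|_\infty \le \|f\|_\infty$, and has $\Lip_\rho(g) \le \Lip_\rho(\hat f) = \l = \tilde L^A(f)$ since composing with a $1$-Lipschitz function does not increase the Lipschitz constant. Thus $L^\rho(g) = \Lip_\rho(g) \le \tilde L^A(f)$ and $\|g\|_\infty \le \|f\|_\infty$, which is even better than the $\e$-relaxed inequalities demanded, so $L^\rho$ is $\pi$-compatible.

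For complex-valued functions one applies this argument to the real and imaginary parts separately, or, more cleanly, replaces the scalar truncation $T$ by the nearest-point projection of $\bC$ onto the closed disc of radius $\|f\|_\infty$ (again $1$-Lipschitz and fixing the disc), so that the same one-line composition argument goes through verbatim; one should check that $\hat f$ in the complex case can still be built, e.g.\ by $\hat f = \widehat{\mathrm{Re}\,f} + i\,\widehat{\mathrm{Im}\,f}$ followed by the disc retraction, noting $\Lip_\rho(\widehat{\mathrm{Re}\,f} + i\,\widehat{\mathrm{Im}\,f}) \le \sqrt 2\,\Lip_\rho(f)$ — here a small constant creeps in, which is exactly where the $\e$ of Definition~\ref{def5.1} earns its keep, or one avoids even that by the sharper observation that $z \mapsto \inf_{x}(f(x) + \l|z - x|)$ makes no sense for complex $f$, so the real/imaginary decomposition with the $\sqrt 2$ loss (absorbed into $\e$) is the honest route. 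The main (and only real) obstacle is this interplay between extending with optimal Lipschitz constant and simultaneously not blowing up the sup norm; the truncation trick resolves it, and I expect the author's proof to be a variant of exactly this McShane-extension-plus-truncation argument.
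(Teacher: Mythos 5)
Your truncation idea — compose an extension with the $1$-Lipschitz nearest-point (radial) retraction of $\bC$ onto the disc of radius $\|f\|_\infty$ — is exactly the key step in the paper's proof. But you've introduced an unforced error by insisting on an \emph{explicit} Lipschitz extension via McShane--Whitney. The resulting $\sqrt2$ factor in the complex case cannot be absorbed into the $\e$ of Definition~\ref{def5.1}: that $\e$ is additive, and the definition must hold for \emph{every} $\e>0$. If $\tilde L^A(f)=1$, you need extensions with Lipschitz seminorm arbitrarily close to $1$, but your real/imaginary McShane construction only guarantees seminorm $\le \sqrt2$. A multiplicative constant strictly greater than $1$ is a genuine obstruction here, not a nuisance term.

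The fix is to drop McShane entirely. By the very definition of the quotient seminorm, $\tilde L^A(f)=\inf\{L^\rho(h): h\in C(Z),\ h|_X=f\}$, so for each $\e>0$ there already exists some continuous extension $h$ with $L^\rho(h)\le \tilde L^A(f)+\e$ — no construction required, and no dichotomy between real and complex values. Now set $g=Q\circ h$ where $Q$ is the radial retraction of $\bC$ onto $\{|z|\le\|f\|_\infty\}$. Since $Q$ is $1$-Lipschitz (it is the nearest-point projection onto a convex subset of the Hilbert space $\bR^2$) and fixes the disc, $g|_X=f$, $\|g\|_\infty\le\|f\|_\infty$, and $L^\rho(g)\le L^\rho(h)\le\tilde L^A(f)+\e$. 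This is the paper's one-line argument. The moral: the $\e$ in $\pi$-compatibility is there precisely because the infimum defining $\tilde L^A$ may not be attained, not to mop up multiplicative losses from a particular choice of extension.
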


\begin{proof}
Let $f \in A$.  Let $Q$ be the radial retraction of $\bC$ onto its
ball of radius $\|f\|_{\i}$ centered at $0$.  It is easily seen that
the Lipschitz constant of $Q$ is $1$.  Then for any $h \in C$ with
$\pi(h) = f$ we can set $g = Q \circ h$ and we will have $\pi(g) = f$
and $L(g) \
\le L(h)$ while $\|g\| = \|f\|$.  This quickly gives the desired
result.
\end{proof}

We remark that the above argument does not work for matrix-valued
functions, as employed in \cite{R17}, since the radial retraction no
longer has Lipschitz constant $1$ \cite{R20}.

While Proposition~\ref{prop5.3} appears favorable, the difficulty is
that the quotient of $L^{\rho}$ on $A$ need not agree with the
Lipschitz seminorm from the metric $\rho_X$ on $X$ coming from
restricting $\rho$:

\begin{example}
\label{exam5.4}
(See \cite{Wvr2,R20}.)  Let $(X,\rho_X)$ be the metric space 
containing exactly $3$
points, at distance $2$ from each other.  We can ask what the
Gromov--Hausdorff distance is from $(X,\rho_X)$ to a metric space
consisting of one point, say $p$.  It is easily seen that the answer
is $1$, with the metric $\rho$ on $Z = X \cup \{p\}$ that extends
$\rho_X$ giving $p$ distance $1$ to each point of $X$.  Now let $f$ be
the function on $X$ which sends the three points of $X$ to the three
different cube roots of $1$ in $\bC$.  It is not difficult to see that
the extension of $f$ to $Z$ that has the smallest Lipschitz norm is
the extension $g$ that sends $p$ to $0$.  But $L^{\rho}(g)$ is easily
seen to be substantially larger than $L^{\rho_X}(f)$.  As remarked in
\cite{R17,R20}, this is possible because the metric on $Z$ is somewhat
hyperbolic.
\end{example}

On the other hand, for any compact metric space $(Z,\rho)$, any closed
subset $X$ of $Z$, and for any $f \in C_{\bR}(X)$, there is a $g \in
C_{\bR}(Z)$ with $g|_X = f$, $\|g\| = \|f\|$ and $L^{\rho}(g) =
L^{\rho_X}(f)$  \cite{R20}.  This shows in particular that here
$L^{\rho_X}$ does coincide with the quotient seminorm from $L^{\rho}$.
It also means that for the situation of Example~\ref{exam5.4} we have
two Leibniz seminorms on $C(X)$ which agree on real-valued functions
but are nevertheless distinct. (For a related phenomenon see \cite{PlT}.)  
From the comments at the end of the
first paragraph of Section~\ref{sec4} we see that these two 
seminorms will give the
same metrics on the set of probability measures on $X$, and in
particular the same metrics on $X$.

We now turn our attention to Gromov--Hausdorff distance.  Let
$(A,L_A)$ and $(B,L_B)$ be $C^*$-metric spaces.  The evident way to
adapt the definition of quantum Gromov--Hausdorff distance given in
definition~$4.2$ of \cite{R6} is to require that the seminorms $L$
considered on $A \oplus B$ be $C^*$-metrics.  Example~\ref{exam5.4}
shows that we can not require the quotient of $L$ on $A$ to agree with
$L_A$, except on self-adjoint elements (though for the main class of
examples considered in later sections they will agree, so those
examples are better-behaved than Example~\ref{exam5.4}).  Then we do
not know whether the quotient is Leibniz.  We could impose
$\pi$-compatibility to ensure this, but then we still may not have the
strong Leibniz property, so it is not clear that it is useful to
impose this.

Perhaps as our topic develops in the future it will become clearer
what are the best conditions to impose.  Anyway, guided by the above
observations, we set, parallel to notation~$4.1$ of \cite{R6}:

\begin{notation}
\label{note5.5}
Let $(A,L_A)$ and $(B,L_B)$ be compact $C^*$-metric spaces.  We let
$\cM_C(L_A,L_B)$ denote the collection of all $C^*$-metrics, $L$, on
$A \oplus B$ such that the quotient of $L$ on $A$ agrees with $L_A$ on
self-adjoint elements of $A$, and similarly for the quotient of $L$ on
$B$.
\end{notation}

We want to modify the definition of quantum Gromov--Hausdorff
distance, $\dist_q$, given in definition~$4.2$ of \cite{R6} by
requiring that the seminorms involved there are in $\cM_C(L_A,L_B)$.
But I am not able to show that the resulting notion satisfies the
triangle inequality.  When one tries to imitate the proof of the
triangle inequality for $\dist_q$ given in theorem~$4.3$ of \cite{R6},
one of the main obstacles is in showing that the $\Lip$-norm $L_{AC}$
of lemma~$4.6$, which is defined as a quotient seminorm, is a
$C^*$-metric.  I would not be surprised if the triangle inequality
fails.  So the term ``distance'' should not be used.  I will use
instead the term ``proximity''.  Thus:

\begin{definition}
\label{def5.6}
Let $(A,L_A)$ and $(B,L_B)$ be compact $C^*$-metric spaces.  We define their
proximity by 
\[
\prox(A,B) = \inf\{\dist_H^{\rho_L}(S(A),S(B)): L \in
\cM_C(L_A,L_B)\}.
\]
\end{definition}

This definition makes sense in the following way.  Both $S(A)$ and
$S(B)$ are closed subsets of $S(A \oplus B)$. Much as at the beginning 
of Section \ref{sec4}, $\rho_L$ is a metric on $S(A\oplus B)$, and
$\dist_H^{\rho_L}$  is ordinary Hausdorff distance with respect to $\rho_L$.
We note that the hypotheses in the
definition of $\cM_C(L_A,L_B)$ are such that proposition~$3.1$ of
\cite{R6} applies, so that for any $L \in \cM_C(L_A,L_B)$ the
restrictions of $\rho_L$ to $S(A)$ and $S(B)$ coincide with
$\rho_{L_A}$ and $\rho_{L_B}$.  Put another way, when we associate to
each $L \in \cM_C(L_A,L_B)$ its restriction to the self-adjoint part
of $A \oplus B$ we obtain a map from $\cM_C(L_A,L_B)$ to
$\cM(L_A^s,L_B^s)$, where $L_A^s$ denotes the restriction of $L_A$ to
the self-adjoint part of $A$, and similarly for $L_B^s$.  This map
need not be either injective or surjective.

It is clear that
\[
\dist_q(A,B) \le \prox(A,B),
\]
since $\prox(A,B)$ is an infimum over a subset of the seminorms used
to define $\dist_q(A,B)$.  Thus if we have a sequence $(B^n,L_{B^n})$
of $C^*$-metric spaces for which the sequence $\prox(A,B^n)$ converges
to $0$, then it follows that $(B^n,L_{B^n})$ converges to
$(A,L_A)$ for quantum Gromov--Hausdorff distance.  For this reason the
absence of the triangle inequality will not be too serious a problem.
The advantage of $\prox$, as mentioned earlier, is that the use of
seminorms $L$ on $A \oplus B$ that are $C^*$-metrics permits one to
try to generalize to $C^*$-metric spaces the results about vector
bundles obtained in \cite{R17} for ordinary metric spaces.  (We plan
to discuss this in a future paper.)


\section{Bimodule-bridges}
\label{sec6}

In the development of quantum Gromov--Hausdorff distance given in
\cite{R6} and used in \cite{R7}, a very convenient method for
constructing suitable seminorms $L$ on $A \oplus B$ involved suitable
continuous seminorms $N$ on $A \oplus B$ that we called ``bridges'',
with $L$ then defined as
\[
L(a,b) = L_A(a) \vee L_B(b) \vee N(a,b).
\]
Within the context of the present paper it is natural to require that $N$ satisfy
a suitable Leibniz condition. There is an evident condition to consider, coming from
viewing $N$ as a seminorm on the algebra $A \oplus B$. But it
seems more appropriate to require the stronger condition
\[
N((a,b)(a',b')) \leq N(a,b)\|b'\| \ + \ \|a\|N(a',b').
\]
Examples show that this condition can be interpreted as indicating that $N$ only provides
metric data between $A$ and $B$, and not within $A$ or within $B$.

We will find  it very useful to use bridges that come from normed
bimodules. Such bridges will satisfy the Leibniz condition stated
above. Let $A$ and $B$ be unital $C^*$-algebras, and let $\O$ be
an $A$-$B$-bimodule.  We say that $\O$ is a normed bimodule if it is
equipped with a norm that satisfies, much as in Section~\ref{sec2},
\[
\|a\o b\| \le \|a\|\|\o\|\|b\|
\]
for all $a \in A$, $b \in B$ and $\o \in \O$.  We assume that the
identity elements of $A$ and  $B$ both act as the identity operator on
$\O$.

\begin{definition}
\label{def6.2}
Let $(A,L_A)$ and $(B,L_B)$ be $C^*$-metric spaces.  By a {\em
bimodule bridge} for $(A,L_A)$ and $(B,L_B)$ we mean a normed
$A$-$B$-bimodule $\O$ together with a distinguished element $\o_0 \ne
0$ such that when we form the seminorm $N$ on $A \oplus B$ defined by
\[
N(a,b) = \|a\o_0 - \o_0b\|,
\]
it has the property that for any $a \in A$ with $a = a^*$ and any $\e
> 0$ there is a $b \in B$ with $b^* = b$ such that
\[
L_B(b) \vee N(a,b) \le L_A(a) + \e,
\]
and similarly for $A$ and $B$ interchanged.
\end{definition}

\begin{theorem}
\label{th6.3}
Let $(\O,\o_0)$ be a bimodule bridge for the $C^*$-metric spaces
$(A,L_A)$ and $(B,L_B)$, and let $N$ be defined as above in terms of
$(\O,\o_0)$.  Define $L$ on $A \oplus B$ by
\[
L(a,b) = L_A(a) \vee L_B(b)  \vee N(a,b) \vee N(a^*,b^*).
\]
Then $L \in \cM_C(L_A,L_B)$.
\end{theorem}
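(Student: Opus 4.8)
\noindent\emph{Proof plan.}
The plan is to check directly the two requirements defining membership in $\cM_C(L_A,L_B)$: that $L$ is a $C^{*}$-metric on $A\oplus B$ in the sense of Definition~\ref{def4.1}, and that the quotient of $L$ on $A$ (resp.\ on $B$) agrees with $L_A$ (resp.\ $L_B$) on self-adjoint elements. Throughout I use that the norm on $A\oplus B$ is the maximum norm and that the completion of $A\oplus B$ is $\bar A\oplus\bar B$.

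For condition~a) of Definition~\ref{def4.1} I would write $L$ as the supremum of the four seminorms $L_A\circ\pi_A$, $L_B\circ\pi_B$, $N$, and $N^{*}$, where $\pi_A,\pi_B$ are the coordinate projections and $N^{*}(a,b)=N(a^{*},b^{*})$, and show each is a lower-semicontinuous strongly-Leibniz seminorm; parts (iii) and (iv) of Proposition~\ref{prop1.2} then transfer this to $L$. That $L=L^{*}$ is then immediate, since $L_A,L_B$ are $*$-seminorms, so passing from $(a,b)$ to $(a^{*},b^{*})$ only interchanges the last two terms of $L$ and fixes the first two. The pullback $L_A\circ\pi_A$ of a strongly-Leibniz seminorm along $\pi_A$ is again strongly Leibniz, by a routine computation using that $\pi_A$ is a unital $*$-homomorphism with $\|\pi_A(a,b)\|=\|a\|\le\|(a,b)\|$ carrying invertibles of $A\oplus B$ to invertibles of $A$; and it is lower semicontinuous because $\{(a,b):L_A(\pi_A(a,b))\le r\}$ is the $\pi_A$-preimage of $\{a:L_A(a)\le r\}$, which is norm-closed since $L_A$ is lower semicontinuous. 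For $N$ I would observe that $\O$ becomes a normed $(A\oplus B)$-bimodule under $(a,b)\cdot\o\cdot(a',b')=a\o b'$, that $d(a,b)=a\o_0-\o_0 b$ is then a derivation of $A\oplus B$ into $\O$, and hence, by Proposition~\ref{prop2.1}, that $N(a,b)=\|d(a,b)\|_{\O}$ is strongly Leibniz (in particular $N(1_A,1_B)=0$); $N$ is norm-continuous, hence lower semicontinuous, and $N^{*}$ inherits both properties by part (iv) of Proposition~\ref{prop1.2}.

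Condition~c) is short: since $N,N^{*}$ are finite one has $(A\oplus B)^f=A^f\oplus B^f$, and as $A^f$ is spectrally stable in $\bar A$ and $B^f$ in $\bar B$, the spectrum of $(a,b)$ in $A^f\oplus B^f$, namely the union of the spectra of $a$ and $b$, equals its spectrum in $\bar A\oplus\bar B$, so $(A\oplus B)^f$ is spectrally stable in $\overline{A\oplus B}$. The quotient conditions are also short: the quotient of $L$ on $A$ is $\tilde L^A(a)=\inf_{b\in B}L(a,b)\ge L_A(a)$, with equality trivial if $L_A(a)=\i$; and if $a=a^{*}$ with $L_A(a)<\i$, the bimodule-bridge property of Definition~\ref{def6.2} yields, for each $\e>0$, a self-adjoint $b$ with $L_B(b)\vee N(a,b)\le L_A(a)+\e$, whence $N(a^{*},b^{*})=N(a,b)\le L_A(a)+\e$ as well, so $L(a,b)\le L_A(a)+\e$ and thus $\tilde L^A(a)=L_A(a)$; the case of $B$ is symmetric.

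The main obstacle is condition~b): that the restriction of $L$ to $(A\oplus B)^{sa}$ is a $\Lip$-norm. On self-adjoint elements $N(a^{*},b^{*})=N(a,b)$, so $L$ there equals $L_A(a)\vee L_B(b)\vee N(a,b)$, the ``bridge'' form of \cite{R6}. I would verify that $(\O,\o_0)$ provides a bridge in the sense of \cite{R6}: $N$ is a continuous seminorm on $A\oplus B$ with $N(1_A,1_B)=0$ and $N(1_A,0)=N(0,1_B)=\|\o_0\|\neq0$, and the ``seesaw'' approximation conditions demanded of a bridge are precisely those built into Definition~\ref{def6.2}. Granting this, the bridge theorem of \cite{R6} shows $L_A\vee L_B\vee N$ is a $\Lip$-norm on $(A\oplus B)^{sa}$; the one subtlety is that \cite{R6} is framed for complete $C^{*}$-algebras, which I would handle by observing that the state spaces and weak-$*$ topologies of $A,B$ coincide with those of $\bar A,\bar B$, so the bridge argument is insensitive to completeness, or else by first passing to $\bar A\oplus\bar B$. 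The heart of that argument, which I would reproduce here, is that after fixing a state $\mu$ of $A$ the set $\{(a,b)\in(A\oplus B)^{sa}:L(a,b)\le1,\ \mu(a)=0\}$ is totally bounded: total boundedness of $\{a:L_A(a)\le1,\ \mu(a)=0\}$ and of the image of $\{b:L_B(b)\le1\}$ in $B^{sa}/\bR1_B$ comes from $L_A,L_B$ being $\Lip$-norms, and writing such a $b$ as $b_0+s1_B$ with $b_0$ in the totally bounded set, the inequality $N(a,b)\le1$ together with boundedness of $\|a\o_0-\o_0b_0\|$ forces $|s|\,\|\o_0\|$ to remain bounded, so $s$ ranges over a bounded set; these combine to give total boundedness, and since $L(a,b)=0$ forces $(a,b)\in\bR(1_A,1_B)$ (again using $\o_0\neq0$), the standard criterion for $\Lip$-norms applies.
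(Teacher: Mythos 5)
Your proposal is correct and follows essentially the same route as the paper: show $N$ is strongly Leibniz via the $(A\oplus B)$-bimodule structure on $\O$ and Proposition~\ref{prop2.1}, invoke Proposition~\ref{prop1.2} to pass to the supremum, cite the bridge theorem (theorem~5.2 of \cite{R6}) for condition~b), note that finiteness of $N$ gives $(A\oplus B)^f=A^f\oplus B^f$ for condition~c), and use Definition~\ref{def6.2} together with the fact that $N$ and $N^*$ agree on self-adjoint elements to identify the quotients. You add two pieces of exposition the paper leaves implicit --- unpacking the total-boundedness argument behind the bridge theorem and flagging the completeness issue --- but the logical skeleton is the same.
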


\begin{proof}
One can show directly that $N$ is strongly Leibniz, or view $\O$ as an
$(A \oplus B)$-bimodule in the evident way and apply
Proposition~\ref{prop2.1}.  Since $N$ is also continuous, it follows
from Proposition~\ref{prop1.2} that $L$ is lower semicontinuous and
strongly Leibniz.  Clearly $L$ is a $*$-seminorm.  Thus condition a)
of Definition~\ref{def4.1} is satisfied.

We now want to apply theorem~$5.2$ of \cite{R6} to show that $L$ is a
$\Lip^*$-norm.  We must thus show that $N \vee N^*$, restricted to the
self-adjoint part of $A \oplus B$ is a bridge as defined in
definition~$5.1$ of \cite{R6}.  From its bimodule source it is clear
that $N(1_A,1_B) = 0$, while $N(1_A,0) \ne 0$ since $\o_0 \ne 0$.
Since also $N$ is continuous, it follows that the first two conditions
of definition~$5.1$ are satisfied.  The main technical condition of
Definition~\ref{def6.2} directly implies that condition $3$ of
definition~$5.1$ of \cite{R6} is satisfied, so that $N \vee N^*$ is
indeed a bridge, and so $L$, restricted to self-adjoint elements, is a
$\Lip$-norm.  Thus $L$ is a $\Lip^*$-norm, and so condition b) of
Definition~\ref{def4.1} is satisfied.  

Because $N$ is clearly finite, $(A \oplus B)^f$, as defined for $L$, 
coincides with $A^f \oplus B^f$.  From the fact that $A^f$ and $B^f$ 
are by assumption spectrally stable in their completion it follows easily 
that $(A \oplus B)^f$ is spectrally stable in its completion.  
Thus $L$ satisfies condition c) of Definition~\ref{def4.1}, 
so that $L$ is a $C^*$-metric.

Suppose now that we are given $a \in A$ with $a = a^*$.  From the
formula for $L$ it is clear that $L(a,b) \ge L_A(a)$ for all $b \in
B$.  Let $\e > 0$ be given.  Then by Definition~\ref{def6.2} there is
a $b \in B$ with $b = b^*$ such that
\[
L_B(b) \vee N(a,b) \le L_A(a) + \e.
\]
Since $N$ and $N^*$ agree on self-adjoint elements, it follows that
$L(a,b) \le L_A(a) + \e$.  Since $\e$ is arbitrary, it follows that
the quotient of $L$ on $A$ applied to $a$ gives $L_A(a)$.  In the same
way the quotient of $L$ on $B$, restricted to self-adjoint elements,
gives $L_B$ on self-adjoint elements. Thus $L \in \cM_C(L_A,L_B)$.
\end{proof}

In the next sections we will see how to construct useful bimodule
bridges for ``matrix algebras converging to the sphere''.

Hanfeng Li has pointed out to me that  prox  is dominated by the ``nuclear 
Gromov-Hausdorff distance'' $\mathrm{dist}_{nu}$ that he defined in remark 5.5 of \cite{Lih3}
and studied further in section 5 of \cite{KrL}. He gives a proof of this in the appendix
of \cite{Lih4}. (He uses the term ``nuclear'' because this
distance has favorable properties for nuclear $C^*$-algebras.) We sketch here how this
works, so that it can be easily compared with what we have done above. 
The crux of Li's approach is that he restricts attention to bimodules of a
quite special kind. Specifically, for unital $C^*$-algebras $A$ and $B$ let $\cH(A,B)$
denote the collection of all triples $(D, \iota_A, \iota_B)$ consisting of a unital $C^*$-algebra
$D$ and injective (so isometric) 
unital homomorphisms $\iota_A$ and $\iota_B$ from $A$ and $B$
into $D$. We can then view $D$ as an $A$-$B$-bimodule in the evident way. For a $C^*$-metric $L_A$ on $A$ Li sets
\[
\cE(L_A) \ = \ \{a \in A^{sa}:\ L_A(a) \leq 1\},
\]
the $L_A$-unit-ball in $A^{sa}$. Then for any $(D, \iota_A, \iota_B) \in \cH(A, B)$
he considers
\[
\mathrm{dist}_H(\iota_A(\cE(L_A)), \iota_B(\cE(L_B))),
\]
 the ordinary Hausdorff distance in $D$ for the norm of $D$. Even though $\cE(L_A)$
 and $\cE(L_B)$ are unbounded, this distance is finite, for the following reason. Let
 $r_A$ be the radius of $(A, L_A)$, as defined in section 2 of \cite{R5}, so 
 that $\|\tilde a\| \ \tilde {} \ \leq \ r_A L_A(a)$ for any $a \in A^{sa}$, where \ $\tilde{}$ \ denotes
 image in the quotient $A^{sa}/\bR 1_A$, with the quotient norm. Then if
 $a \in \cE(L_A)$ so that $L_A(a) \leq 1$, it follows that $a = a' + t1_A$
 for some $t \in \bR$ and $a' \in A^{sa}$ with $\|a'\| \leq r_A$. Let $b = t1_B$,
 so that $b \in \cE(L_B)$. Then
 \[
 \|\iota_A(a) - \iota_B(b)\| = \|a'\| \leq r_A .
 \]
 Thus $\iota_A(\cE(L_A))$ is in the $r_A$-neighborhood of 
 $\iota_B(\cE(L_B))$.
By also interchanging the roles of $a$ and $b$ we see that
\[
\mathrm{dist}_H(\iota_A(\cE(L_A)), \iota_B(\cE(L_B)) \ \leq \ \max(r_A, r_B).
\] 
Then Li defines $\mathrm{dist}_{nu}(A, B)$ (or, more precisely, $\mathrm{dist}_{nu}(L_A, L_B)$) to be
\[
 \ \inf\{\mathrm{dist}_H(\iota_A(\cE(L_A)), \iota_B(\cE(L_B))):
(D, \iota_A, \iota_B) \in \cH(A, B)\} .
\]

Li shows as follows that $\mathrm{dist}_{nu}$ satisfies the triangle inequality. Suppose that a 
third compact $C^*$-metric space $(C, L_C)$ is given. Let 
$d_{AB} = \mathrm{dist}_{nu}(A,B)$, and similarly for $d_{BC}$ and $d_{AC}$. 
Given $\e > 0$ we can find $(D, \iota_A, \iota_B) \in \cH(A, B)$ and
 $(E, \rho_B, \rho_C) \in \cH(B, C)$ such that
 \[
 \mathrm{dist}_H(\iota_A(\cE(L_A)), \iota_B(\cE(L_B)) \ \leq \
 d_{AB} \ + \ \e ,
 \]
 and similarly for $d_{BC}$.
 Let $F = D*_B E$ be \emph{an} amalgamated product of $D$ and $E$ over $B$
 (using the inclusions $\iota_B$ and $\rho_B$). This means that there are unital injective
 homomorphisms $\s_D$ and $\s_E$ of $D$ and $E$ into $F$ such that
 $\s_D \circ \iota_B \ = \ \s_E \circ \rho_B$. (It is natural to cut down to the subalgebra
 generated by the images of $D$ and $E$ in $F$.) 
 
 Before continuing, we remark that it is easy to construct a universal 
 amalgamated free product, $A*_C B$, if one does
 not insist that the homomorphisms into it from $A$ and $B$ are injective. One takes
 the quotient of the universal (i.e. full) free product $A*B$ by the ideal generated by 
 the desired relations from $C$. See \cite{Lrn}. What is not as simple is to show
 that the evident homomorphisms of $A$ and $B$ into the universal 
 $A*_C B$ are injective.
 This was first shown by Blackadar in \cite{Blk}. In a comment added in proof in
 that paper, Blackadar says that John Phillips has shown him a preferable proof. 
 Blackadar has shown me this proof of John Phillips, and since it seems
 not to have appeared in print up to now, we sketch it here. Hanfeng Li
 has pointed out to me that a version of the
 argument in a substantially more complicated situation appears in the proof
 of proposition 2.2 of \cite{ADE}.
 
 To simplify notation we simply view $C$ as a unital subalgebra of each of $A$
 and $B$. The crux of the matter is to show that there are faithful (non-degenerate)
 representations of $A$ and $B$ on the same Hilbert space whose restrictions
 to $C$ are equal. We construct such representations as follows.
 
 \begin{enumerate}
 \item Let $(\pi_1, \cH_1)$ be a faithful representation of $A$. Form the restricted
 representation $(\pi_1 |_C, \cH_1)$ of $C$, and extend it to a representation
 $(\rho_1, \cH_1 \oplus \cK_1)$ of $B$. (This can be done by decomposing
 into cyclic representations and extending their states -- see lemma 2.1 of \cite{ADE} .)
 \item Notice that $\rho_1|_C$ carries $\cH_1$ into itself and so carries $\cK_1$
 into itself. Extend $(\rho_1|_C, \cK_1)$ to a representation $(\pi_2, \cK_1\oplus \cH_2)$
 of $A$.
 \item Extend $(\pi_2|_C, \cH_2)$ to a representation $(\rho_2, \cH_2 \oplus \cK_2)$
 of $B$.
 \item Continue this process through all the positive integers, and form
 $\cH = \bigoplus_1^\infty (\cH_j \oplus \cK_j)$. The $\pi_j$'s and $\rho_j$'s
 combine to give representations $\pi$ and $\rho$ of $A$ and $B$ on $\cH$
 which can be checked to agree on $C$. Since $\pi_1$ was chosen to be
 a faithful representation of $A$, so is $\pi$. Thus the homomorphism
 from $A$ into $A*_C B$ must be injective. The situation is symmetric for $A$
 and $B$, so the homomorphism
 from $B$ into $A*_C B$ must also be injective.
 \end{enumerate}

We return to demonstrating the triangle inequality for $\mathrm{dist}_{nu}$. 
Let $\tau_A = \s_D\circ \iota_A$ and $\tau_C = \s_E \circ \rho_C$.
 Then $(F, \tau_A, \tau_C) \in \cH(A,C)$. Furthermore, if $a \in \cE(L_A)$ then there
 is a $b \in \cE(L_B)$ such that $\|\iota_A(a) - \iota_B(b)\| \leq d_{AB} + \e$, so that
$ \|\tau_A(a) - \s_D(\iota_B(b))\| \leq d_{AB} + \e$. In the same way there exists
$c \in \cE(L_C)$ such that $\|\s_E(\rho_B(b)) - \tau_C(c))\| \leq d_{BC} + \e$.
But $ \s_D(\iota_B(b)) = \s_E(\rho_B(b))$, and so
\[
\|\tau_A(a) - \tau_C(c)\| \leq d_{AB} + d_{BC} + 2\e.
\]
In this way we find that
\[
\mathrm{dist}_{nu}(L_A, L_C) \ \leq \ \mathrm{dist}_{nu}(L_A, L_B) \ + \ \mathrm{dist}_{nu}(L_B, L_C) .
\]
Further favorable properties of $\mathrm{dist}_{nu}$ are presented in \cite{Lih3, KrL} that we
will not discuss here.

Given $(D, \iota_A, \iota_B) \in \cH(A, B)$, we can view $D$ as a normed $A$-$B$-bimodule
in the evident way, and as special element we can choose
$\omega_0 = 1_D$. The corresponding bounded seminorm $N_D$ on $A \oplus B$
is then simply defined by
\[
N_D(a,b) = \|\iota_A(a) - \iota_B(b)\| .
\] 
Given $C^*$-metrics $L_A$ and $L_B$ on $A$ and $B$, we can seek constants $\g$
such that $\g^{-1}N_D$ is a bimodule bridge for $L_A$ and $L_B$. Let 
$\d = \mathrm{dist}_H(\iota_A(\cE(L_A)), \iota_B(\cE(L_B)))$. Given any $\e > 0$ 
we show that $\d+\e$ is such a constant. Let $a \in A^{sa}$
with $L_A(a) = 1$.  
Then there is a $b \in B^{sa}$ such that $L_B(b) \leq 1$ and
$\|\iota_A(a) - \iota_B(b)\| \leq \d + \e$, so that
\[L_B(b) \vee (\d+\e)^{-1}N_D(a,b) \leq 1 = L_A(a).
\]
We can interchange the roles of $A$ and $B$. Thus we see that $(\d+\e)^{-1}N_D$
is indeed a bimodule bridge. Notice that for any $a \in A$ and $b \in B$ we have
$N(a^*, b^*) = N(a,b)$. Thus when we define $L$ on $A \oplus B$ by
\[
L(a,b) = L_A(a) \vee L_B(b) \vee (\d+\e)^{-1}N_D(a,b)
\]
it follows from Theorem \ref{th6.3} that $L \in \cM_C(L_A, L_B)$.

But even more is true. As suggested by Li, we will follow the argument in the last 
paragraph of the proof of proposition 4.7 of \cite{Lih2}. Let $\mu \in S(A)$. 
View $A$ and $B$ as subalgebras of $D$ via $\iota_A$ and $\iota_B$.
By the Hahn-Banach theorem, extend $\mu$ to a state $\tilde \nu$ of $D$,
and then restrict $\tilde \nu$ to $B$ to get $\nu \in S(B)$. Then for $a \in A^{sa}$
and $b \in B^{sa}$ we have
\[
|\mu(a) - \nu(b)| = |\tilde \nu(a) - \tilde \nu(b)| \leq \|\iota_A(a) - \iota_B(b)\|
\leq (\d+\e) L(a,b),
\]
where $L$ is defined as above. Consequently if $L(a,b) \leq 1$ then we have
$|\mu(a) - \nu(b)| \leq \d+\e$. Thus $\mu$ is in the $\d+\e$-neighborhood
of $S(B)$ for the metric $\rho_L$ on $S(A \oplus B)$. The same argument
works with the roles of $A$ and $B$ reversed. Since $\e$ is arbitrary, we see 
from this that
\[
\mathrm{prox}(A,B) \leq \mathrm{dist}_{nu}(A,B),
\]
as asserted.

In \cite{Lih3, KrL} Li indicates that $\mathrm{dist}_{nu}$ works very 
well with many of the classes of specific examples whose
metric aspects have been studied. In particular, he 
pointed out to me that $\mathrm{dist}_{nu}$
can be used to give an alternate proof of our Main Theorem (in a qualitative
way). This alternate proof is attractive because of its quite general approach. 
However, a proof via
$\mathrm{dist}_{nu}$ appears to me to be less concrete 
and quantitative than that which we
give in the next sections, both because the proof via $\mathrm{dist}_{nu}$
uses a somewhat deep theorem of Blanchard on the subtrivialization of
continuous fields of nuclear $C^*$-algebras (as discussed in remark
5.5 of \cite{Lih3}), and because of its use of the Hahn-Banach theorem
seen just above. The proof we will give provides specific estimates for the
approximation, and provides a constructive way
of finding a state for one of the algebras that is close to a given state
of the other algebra.
 

\section{Matrix algebras and homogeneous spaces}
\label{sec7}

In this section we begin the study of our main example.  Our
discussion will be fairly parallel to that in \cite{R7} but with some
important differences.  For the reader's convenience we will include
here some fragments of \cite{R7} in order to make precise our setting.
We will usually use the notation used in \cite{R7}.

Let $G$ be a compact group (perhaps even finite at first).  Let $U$ be
an irreducible unitary representation of $G$ on a Hilbert space $\cH$.
Let $B = \cL(\cH)$ denote the $C^*$-algebra of linear operators on
$\cH$ (a ``full matrix algebra'').  There is a natural action, $\a$,
of $G$ on $B$ by conjugation by $U$.  That is, $\a_x(T) = U_xTU_x^*$
for $x \in G$ and $T \in B$.  We introduce metric data into the
picture by choosing a continuous length-function, $\ell$, on $G$.  We
require that $\ell$ satisfy the additional condition that
$\ell(xyx^{-1}) = \ell(y)$ for $x,y \in G$.  This ensures that the
metric on $G$ defined by $\ell$ is invariant under both left and right
translations.  As in Example~\ref{exam2.5} we define a seminorm,
$L_B$, on $B$ by
\[
L_B(T) = \sup\{\|\a_x(T) - T\|/\ell(x): x \ne e_G\}.
\]
Then $L_B$ is a $C^*$-metric on $B$ for the reasons given in
Example~\ref{exam4.3}.

Let $P$ be a rank-one projection in $B$.  Let $H = \{x \in G: \a_x(P)
= P\}$, the stability subgroup for $P$.  Let $A = C(G/H)$, the
$C^*$-algebra of continuous complex-valued functions on $G/H$.  We let
$\l$ denote the usual action of $G$ on $G/H$, and so on $A$, by
translation.  We define a seminorm, $L_A$, on $A$ as in
Example~\ref{exam2.5} by
\[
L_A(f) = \sup\{\|\l_x(f) - f\|/\ell(x): x \ne e_G\}.
\]
Again, $L_A$ is a $C^*$-metric for the reasons given in
Example~\ref{exam4.3}.

We can then ask for estimates of $\prox(A,B)$.  To obtain such an
estimate we need to construct a suitable $C^*$-metric on $A \oplus B$.
We do this as follows.  For any $T \in B$ its Berezin covariant
symbol, $\s_T$, is defined by
\[
\s_T(x) = \tr(T\a_x(P)),
\]
for $x \in G$.  Here $\tr$ is the usual unnormalized trace on $B$.
Because of the definition of $H$ we see that $\s_T \in C(G/H) = A$.
When the $\a_x(P)$'s are viewed as giving states of $B$ via $\tr$ as
above, they form a ``coherent state'', assigning a pure state of $B$
to each pure state of $A$.  Once we note that $\tr$ is $\a$-invariant,
it is easy to see that $\s$ is a unital, positive, norm-non-increasing
$\a$-$\l$-equivariant operator from $B$ to $A$. However eventually one
really wants also the property that if $\s_T = 0$ then $T = 0$.  This
is equivalent to the linear span of the $\a_x(P)$'s in $B$ being all
of $B$.  It is an interesting question as to which representations $U$
admit such a $P$, and how many such $P$'s, even for finite groups.  

We let $\O = \cL(B,A)$, the Banach space of linear operators from $B$
to $A$, equipped with the operator norm corresponding to the
$C^*$-norms on $A$ and $B$.  (Perhaps we should be using the space of
completely bounded operators here.)  We let $M$ and $\L$ denote the
left regular representations of $A$ and $B$.  Then $\O$ is an 
$A$-$B$-bimodule for the operations
\[
f\o = M_f \circ \o \qquad \mbox{ and } \qquad \o T = \o \circ \L_T.
\]
It is easily checked that $\O$ is a normed $A$-$B$-bimodule.  Of
course $\s \in \O$.  We will take our bimodule bridge for $(A,L_A)$
and $(B,L_B)$ to be of the form $(\O,\g^{-1}\s)$ where $\g$ is a
positive real number that is yet to be determined.  Set
\[
N_{\s}(f,T) = \|M_f \circ \s - \s \circ \L_T\|.
\]
Then the seminorm $N$ from $(\O,\g^{-1}\s)$ is defined by
\[
N(f,T) = \g^{-1}N_{\s}(f,T).
\]
We need to determine the values of $\g$ for which $(\O,\g^{-1}\s)$ is
a bimodule bridge so that, in particular, the corresponding seminorm
$L$ has $L_A$ and $L_B$ as quotients for self-adjoint elements.  But,
as a first step in showing what the implication for proximity will be,
we have:

\begin{proposition}
\label{prop7.1}
Suppose that $\g$ is such that $(\O,\g^{-1}\s)$ is a bimodule bridge
for $L_A$ and $L_B$, and let $N$ be the seminorm it determines.  Let
$L = L_A \vee L_B \vee N \vee N^*$ and let $\rho_L$ be the metric on
$S(A \oplus B)$ that $L$ determines.  Then $S(A)$ is in the
$\g$-neighborhood of $S(B)$ for $\rho_L$.
\end{proposition}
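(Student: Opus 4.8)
The plan is to exhibit, for each state $\mu \in S(A)$, an explicit state $\nu \in S(B)$ with $\rho_L(\mu,\nu) \le \g$. We view $\mu$ and $\nu$ as states of $A \oplus B$ in the usual way (evaluation on the first, resp.\ the second, coordinate), so that $\rho_L(\mu,\nu)$ is by definition the supremum of $|\mu(a) - \nu(b)|$ over self-adjoint $(a,b) \in A \oplus B$ with $L(a,b) \le 1$; producing such a $\nu$ is exactly what the assertion that $S(A)$ lies in the $\g$-neighborhood of $S(B)$ for $\rho_L$ demands.

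First I would take $\nu = \mu \circ \s$, where $\s\colon B \to A$ is the Berezin covariant symbol map. Since $\s$ was noted above to be unital and positive, $\nu$ is a positive linear functional on $B$ with $\nu(1_B) = \mu(\s(1_B)) = \mu(1_A) = 1$, hence a state of $B$ (equivalently, $\nu = \s^*(\mu)$ for the adjoint map, which carries states to states). The only estimate the argument then needs comes from the bimodule structure: for $f \in A$ and $T \in B$ the operator $M_f \circ \s - \s \circ \L_T \in \cL(B,A)$, evaluated at $1_B$, equals $M_f(\s(1_B)) - \s(\L_T(1_B)) = f\cdot 1_A - \s(T) = f - \s(T)$, because $\s(1_B) = 1_A$ and $\L_T(1_B) = T$. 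As $\|1_B\| = 1$, this gives
\[
\|f - \s(T)\|_A \ \le \ \|M_f \circ \s - \s \circ \L_T\|_{\cL(B,A)} \ = \ N_{\s}(f,T) \ = \ \g\,N(f,T).
\]

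Now let $(a,b) \in A \oplus B$ be self-adjoint with $L(a,b) \le 1$. From $L = L_A \vee L_B \vee N \vee N^*$ we get $N(a,b) \le L(a,b) \le 1$, so the displayed inequality yields $\|a - \s(b)\|_A \le \g$, and therefore
\[
|\mu(a) - \nu(b)| \ = \ |\mu(a - \s(b))| \ \le \ \|a - \s(b)\|_A \ \le \ \g.
\]
Taking the supremum over all such $(a,b)$ gives $\rho_L(\mu,\nu) \le \g$, which is the claim.

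The argument is short, and there is no serious obstacle: the content is the choice $\nu = \mu\circ\s$ together with the realization that the distance between $\mu$ and $\nu$ is controlled by $\|a - \s(b)\|$, which one extracts from the bridge seminorm $N$ simply by feeding the unit $1_B$ to its defining operator. I would remark that this half of the estimate does not use the hypothesis that $\g$ makes $(\O,\g^{-1}\s)$ an actual bimodule bridge; that hypothesis enters, via Theorem~\ref{th6.3}, to ensure $L$ is a $C^*$-metric so that $\rho_L$ is a genuine metric, and it would be needed for the symmetric statement that $S(B)$ lies in a $\g$-neighborhood of $S(A)$, which calls for a map in the opposite direction and is not treated here.
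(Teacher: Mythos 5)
Your proof is correct and follows essentially the same route as the paper: choose $\nu = \mu \circ \s$, then observe that $\|f - \s_T\| = \|(M_f\circ\s - \s\circ\L_T)(1_B)\| \le N_\s(f,T) = \g\,N(f,T)$, and conclude by feeding this into the definition of $\rho_L$. The extra remarks you include (verifying $\nu$ is a state, noting which part of the hypothesis is actually used) are correct and harmless additions.
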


\begin{proof}
Let $\mu \in S(A)$.  We must find a $\nu \in S(B)$ such that
$\rho_L(\mu,\nu) \le \g$.  We choose $\nu = \mu \circ \s$.  Let $(f,T)
\in A \oplus B$ be such that $L(f,T) \le 1$, so that $N(f,T) \le 1$
and thus $N_{\s}(f,T) \le \g$.  Then
\begin{align*}
|\mu(f,T)-\nu(f,T)| &= |\mu(f) - \mu(\s_T)| \le \|f-\s_T\| \\
&= \|(M_f \circ \s - \s \circ \L_T)(I)\| \le N_{\s}(f,T) \le \g ,
\end{align*}
where $I$ is the identity element in $B$.  From the definition of
$\rho_L$ it follows that $\rho_L(\mu,\nu) \le \g$.
\end{proof}

We remark that in our earlier paper on ``matrix algebras converge to
the sphere'' \cite{R7} the bridge $N$ that we had used was $N(f,T) =
\g^{-1}\|f-\s_T\|$.
The above calculation reveals that this old $N$ is related to our new
one just by applying our $M_f \circ \s - \s \circ \L_T$ to the
identity operator. The old $N$ is not Leibniz.

To proceed further we now obtain another expression for $N_{\s}$ which
will be more convenient for some purposes.  We note that for $S,T \in
B$ and $f \in A$ we have
\[
(M_f \circ \s - \s \circ \L_T)(S) = f\s_S - \s_{TS},
\]
and that when this is evaluated at $x \in G/H$ we obtain
\begin{align*}
f(x)\s_S(x) - \s_{TS}(x) &= f(x)\tr(S\a_x(P)) - \tr(TS\a_x(P)) \\
&= \tr(\a_x(P)(f(x)I-T)S).
\end{align*}
The operator norm of $M_f \circ \s - \s \circ \L_T$ is then the
supremum of the absolute value of the above expression taken over all
$x \in G/H$ and $S \in B$ with $\|S\| \le 1$.  But $\tr$ gives a
pairing that expresses the dual of $B$ with its operator norm as $B$
with the trace-class norm, which we denote by $\|\cdot\|_1$.  From
this fact we see that
\[
\|M_f \circ \s - \s \cdot \L_T\| = \sup\{\|\a_x(P)(f(x)I-T)\|_1: x \in
G/H\}.
\]
But if $R$ is a rank-one operator then $R^*R = r^2Q$ for some rank-one
projection $Q$ and some $r \in \bR^+$, so that
\[
\|R\|_1 = \tr((R^*R)^{1/2}) = r  = \|R^*R\|^{1/2} = \|R\|,
\]
where the norm on the right is the operator norm.  In this way we
obtain:

\begin{proposition}
\label{prop7.2}
For $f \in A$ and $T \in B$ we have
\[
N_{\s}(f,T) = \sup\{N_x(f,T): x \in G/H\}
\]
where $N_x(f,T) = \|\a_x(P)(f(x)I-T)\|$.
\end{proposition}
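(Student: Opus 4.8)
The plan is to unwind the operator norm defining $N_\s$ into a pointwise computation on $G/H$ and then to collapse it by a rank-one norm identity.

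First I would evaluate the operator $M_f \circ \s - \s \circ \L_T \in \cL(B,A)$ on a test element $S \in B$. Since $\L_T S = TS$ and $\s$ is linear, $(M_f\circ\s - \s\circ\L_T)(S) = f\s_S - \s_{TS}$, an element of $A = C(G/H)$; evaluating at $x \in G/H$ and using $\s_S(x) = \tr(S\a_x(P))$ together with linearity and cyclicity of $\tr$ gives the value $\tr\big(\a_x(P)(f(x)I - T)S\big)$. Because $A$ carries the supremum norm and $\O = \cL(B,A)$ the corresponding operator norm, this yields
\[
N_\s(f,T) = \sup\{\,|\tr(\a_x(P)(f(x)I-T)S)| : x \in G/H,\ \|S\|\le 1\,\}.
\]
This supremum factors as a supremum over $x$ of a supremum over the unit ball of $B$, so for each fixed $x$ it remains to compute $\sup\{|\tr(RS)| : \|S\|\le 1\}$ with $R = \a_x(P)(f(x)I - T)$.

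Next I would invoke trace duality on the finite-dimensional algebra $B = \cL(\cH)$: the pairing $(R,S) \mapsto \tr(RS)$ identifies the dual Banach space of $(B,\|\cdot\|)$ (operator norm) with $(B,\|\cdot\|_1)$ (trace-class norm), whence $\sup\{|\tr(RS)| : \|S\|\le 1\} = \|R\|_1$. Thus $N_\s(f,T) = \sup\{\|\a_x(P)(f(x)I-T)\|_1 : x \in G/H\}$. Finally, since $\a_x(P)$ is a rank-one projection, $R = \a_x(P)(f(x)I-T)$ has rank at most one; writing $R^*R = r^2 Q$ for a rank-one projection $Q$ and $r \ge 0$ gives $\|R\|_1 = \tr((R^*R)^{1/2}) = r = \|R^*R\|^{1/2} = \|R\|$, the operator norm. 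Substituting $\|R\|_1 = \|R\| = N_x(f,T)$ gives exactly the asserted formula.

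The argument is essentially routine, so I do not expect a serious obstacle; the two points needing a little care are using the trace duality in the right direction — so that maximizing over unit-norm $S$ recovers the trace-class rather than the operator norm of $R$ — and observing that $\a_x(P)M$ has rank at most one for every $M \in B$, which is what lets the trace-class norm collapse back to the operator norm. I would also note that the supremum may be taken over $G/H$ rather than $G$ because $\a_x(P)$ depends only on the coset $xH$, which is already what places $\s_T$ in $C(G/H)$.
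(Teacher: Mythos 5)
Your proposal is correct and follows essentially the same route as the paper: evaluate $(M_f\circ\s - \s\circ\L_T)(S)$ pointwise on $G/H$ to obtain $\tr(\a_x(P)(f(x)I-T)S)$, identify the dual of $(B,\|\cdot\|)$ with $(B,\|\cdot\|_1)$ via the trace pairing to rewrite the inner supremum as a trace-class norm, and then use the rank-one identity $\|R\|_1 = \|R\|$ to collapse back to the operator norm. There is no substantive difference from the paper's argument.
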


We remark that $N_x(f,T)$ can easily be checked to be strongly
Leibniz.


\section{The choice of the constant $\g$}
\label{sec8}

Let us first see what choices of $\g$ ensure that $L$ has $L_A$ as a
quotient.  It suffices to choose $\g$ such that for any $f \in A$ we
can find $T \in B$ such that $L_B(T) \vee N(f,T) \le L_A(f)$.  On
$G/H$ let us momentarily use the $G$-invariant measure of mass $1$ to
give $A$ the norm from $L^2(G/H)$.  Similarly, on $B$ we put the
Hilbert--Schmidt norm from the {\em normalized} trace.  Then $\s$ has
an adjoint operator, which we denote by $\breve \s$.  It is easily
computed \cite{R7} to be defined by
\[
{\breve \s}_f = d \int_{G/H} f(x)\a_x(P)dx,
\]
where $d$ is the dimension of $\cH$.
One can easily verify that $\breve \s$ is a positive and
$\l$-$\a$-equivariant map from $A$ to $B$.  Furthermore, ${\breve
\s}_1 = d \int \a_x(P)dx$, which is clearly $\a$-invariant, and so is
a scalar multiple of $I$ since $U$ is irreducible.  But clearly the
usual trace of $d \int \a_x(P)dx$ is $d$.  Thus ${\breve \s}_1 = I$,
that is, $\breve \s$ is unital.  (This is why we used the normalized
traces in defining $\breve \s$.)  It follows that $\breve \s$ is also
norm non-increasing.

Then, given $f \in A$, we will choose $T$ to be $T = {\breve \s}_f$.
It is easily seen (as in the proof of proposition~$1.1$ of \cite{R7})
that $L_B({\breve \s}_f) \le L_A(f)$.  For any $x \in G/H$ we have by
equivariance of $\breve \s$
\[
N_x(f,{\breve \s}_f) = \|\a_x(P)(f(x)I - {\breve \s}_f)\| =
\|P((\l_x^{-1}f)(e)I - {\breve \s}_{\l_x^{-1}f})\|.
\]
Since $f$ is arbitrary and $L_A$ is $\l$-invariant, it suffices for us
to consider $\|P(f(e)I - {\breve \s}_f)\|$.  But 
\begin{align*}
\|P(f(e)I-{\breve \s}_f)\| &= \left\|P\left(f(e)d \int \a_y(P)dy - d
\int f(y)\a_y(P)dy\right)\right\| \\
&= d\left\| \int (f(e)-f(y))P\a_y(P)dy\right\| \\
&\le L_A(f) \ d \int \rho_{G/H}(e,y)\|P\a_y(P)\|dy,
\end{align*}
where $\rho_{G/H}$ is the ordinary metric on $G/H$ from $L_A$.    From all of this we
obtain:

\begin{proposition}
\label{prop8.1}
Set $\g^A = d \int \rho_{G/H}(e,y)\|P\a_y(P)\|dy$.  Then for any $\g
\ge \g^A$ the seminorm $L = L_A \vee L_B \vee \g^{-1}(N_{\s} \vee
N_{\s}^*)$ on $A \oplus B$ has $L_A$ as its quotient on $A$.
\end{proposition}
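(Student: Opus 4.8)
The strategy is to assemble the estimates developed in the paragraphs immediately preceding the proposition. First I would dispose of the trivial inequality: since $L(f,T) = L_A(f) \vee L_B(T) \vee \g^{-1}N_{\s}(f,T) \vee \g^{-1}N_{\s}^*(f,T) \ge L_A(f)$ for every $T \in B$, the quotient seminorm ${\tilde L}^A$ on $A$ always satisfies ${\tilde L}^A(f) \ge L_A(f)$. The content of the proposition is the reverse inequality, which, since ${\tilde L}^A(f) = \inf\{L(f,T): T \in B\}$, amounts to exhibiting for each $f \in A$ a single $T \in B$ with $L_B(T) \vee \g^{-1}N_{\s}(f,T) \vee \g^{-1}N_{\s}^*(f,T) \le L_A(f)$.

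The candidate, as in \cite{R7}, is $T = {\breve \s}_f = d\int_{G/H} f(x)\a_x(P)\,dx$, and I would check three bounds. (i) $L_B({\breve \s}_f) \le L_A(f)$: by the $\l$-$\a$-equivariance of $\breve \s$ and the fact that it is norm-non-increasing, $\|\a_x({\breve \s}_f) - {\breve \s}_f\| = \|{\breve \s}_{\l_x f - f}\| \le \|\l_x f - f\|$; dividing by $\ell(x)$ and taking the supremum over $x \ne e_G$ gives the claim (this is the argument of proposition~$1.1$ of \cite{R7}). (ii) $N_{\s}(f,{\breve \s}_f) \le \g^A L_A(f)$: by Proposition~\ref{prop7.2}, $N_{\s}(f,{\breve \s}_f) = \sup_{x \in G/H} \|\a_x(P)(f(x)I - {\breve \s}_f)\|$; conjugating the $x$-term by $U_x^*$ and using that conjugation by the unitary $U_x$ preserves norms together with the equivariance identity ${\breve \s}_{\l_x^{-1}f} = \a_x^{-1}({\breve \s}_f)$, one rewrites it as $\|P((\l_x^{-1}f)(e)I - {\breve \s}_{\l_x^{-1}f})\|$, and since $L_A$ is $\l$-invariant it therefore suffices to bound $\|P(f(e)I - {\breve \s}_f)\|$. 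For this last quantity the displayed computation just before the proposition gives
\[
\|P(f(e)I - {\breve \s}_f)\| = d\left\|\int (f(e)-f(y))P\a_y(P)\,dy\right\| \le L_A(f)\,d\int \rho_{G/H}(e,y)\|P\a_y(P)\|\,dy = \g^A L_A(f).
\]
(iii) $N_{\s}^*(f,{\breve \s}_f) \le \g^A L_A(f)$: since ${\breve \s}_f = d\int f(x)\a_x(P)\,dx$ with each $\a_y(P)$ self-adjoint, one has $({\breve \s}_f)^* = {\breve \s}_{f^*}$, so $N_{\s}^*(f,{\breve \s}_f) = N_{\s}(f^*,{\breve \s}_{f^*})$; applying (ii) to $f^*$ and using $L_A(f^*) = L_A(f)$ (since $L_A$ is a $*$-seminorm) gives the bound.

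Dividing (ii) and (iii) by $\g$ and using $\g \ge \g^A$ yields $\g^{-1}N_{\s}(f,{\breve \s}_f) \le L_A(f)$ and likewise for $N_{\s}^*$; combined with (i) this gives $L(f,{\breve \s}_f) = L_A(f)$, so ${\tilde L}^A(f) \le L_A(f)$, and with the trivial direction we conclude ${\tilde L}^A = L_A$ on $A$. The main obstacle is the norm estimate in step (ii), but that has essentially been carried out already in the paragraph preceding the proposition; the only genuinely new bookkeeping is the reduction to the base point $e$ via $\l$-invariance (handled by the isometric conjugation by $U_x$) and the observation that $\breve\s$ preserves adjoints, which is what keeps the $*$-structure — or merely self-adjointness, if one only wants the conclusion on $A^{sa}$ — under control.
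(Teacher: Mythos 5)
Your proposal is correct and follows essentially the same route as the paper: choose $T = \breve\sigma_f$, use the $\l$-$\a$-equivariance of $\breve\sigma$ to reduce the estimate of $N_x(f,\breve\sigma_f)$ to the base point $e$, invoke the displayed integral bound to get $\g^A L_A(f)$, and dispose of the $N_\sigma^*$ term via $\breve\sigma_{\bar f} = (\breve\sigma_f)^*$. The only thing you make more explicit than the text is the routine conjugation-by-$U_x$ step behind the equivariance reduction, which the paper leaves implicit.
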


We remark that in the above proposition we do  not have to restrict
attention to self-adjoint elements, in contrast to the requirement in
Definition~\ref{def6.2}.  Note
that ${\breve \s}_{\bar f} = ({\breve \s}_f)^*$. 
I do not know whether the above condition on
$\g$ is the best that can be obtained in the absence of further
hypotheses on $G$, $U$, $P$ and $\ell$.

We now consider the quotient of $L$ on $B$.  Given $T \in B$ we seek
$f \in A$ such that $L_A(f) \vee N(f,T) \le L_B(T)$.  We choose $f =
\s_T$, and seek what requirement this puts on $\g$.  As above, it is
easy to check that $L_A(\s_T) \le L_B(T)$.  Again by equivariance we
have
\[
N_x(\s_T,T) = \|\a_x(P)(\tr(T \a_x(P))I - T)\| =
\|P(\tr(P\a_x^{-1}(T))I-\a_x^{-1}(T))\|.
\]
Since $T$ is arbitrary and $L_B$ is $\a$-invariant, it suffices to
choose $\g$ large enough that $\|P\tr(PT)-PT\| \le \g L_B(T)$ for all
$T \in B$.  Notice that the left-hand side gives a seminorm (with
value $0$ for $T = P$ or $I$) on the quotient space ${\tilde B} =
B/\bC I$, while $L_B$ gives a norm on ${\tilde B}$.  Since $B$ is
finite-dimensional, there does exist a finite $\g$ such that the above
inequality is satisfied.  Notice also that $\s_{T^*} = (\s_T)^-$.
Thus we obtain:

\begin{proposition}
\label{prop8.2}
Define $\g^B$ by
\[
\g^B = \sup\{\|P\tr(PT) - PT\|: T \in B \mbox{ and } L_B(T) \le 1\}.
\]
Then $\g^B$ is finite, and for any $\g \ge \g^B$ the seminorm 
$L = L_A\vee L_B\vee \g^{-1}(N_{\s}\vee N_{\s}^*)$ on $A \oplus B$ has
$L_B$ as its quotient on $B$.
\end{proposition}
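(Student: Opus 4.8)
The plan is to verify finiteness of $\gamma^B$ first, and then to compute the quotient seminorm of $L$ on $B$ directly using the test element $f=\sigma_T$, essentially extending the computation carried out in the paragraph preceding the statement.

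For finiteness: the assignment $T\mapsto \|P\tr(PT)-PT\|$ is a seminorm on $B$, and it vanishes on $\bC I$ (indeed it vanishes at $T=I$, where $\tr(P)=1$, and also at $T=P$), so it factors through a seminorm on $\tilde B = B/\bC I$. Since $U$ is irreducible, Schur's lemma shows that $\alpha_x(T)=T$ for all $x$ forces $T\in\bC I$; hence $L_B$ also factors through $\tilde B$, where it is in fact a norm. As $B$, and therefore $\tilde B$, is finite-dimensional, every seminorm on $\tilde B$ is dominated by a constant multiple of this norm, which gives $\gamma^B<\i$. Equivalently, by homogeneity, $\|P\tr(PS)-PS\|\le\gamma^B L_B(S)$ for every $S\in B$.

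For the quotient statement, write the quotient seminorm on $B$ as ${\tilde L}^B(T)=\inf\{L(f,T):f\in A\}$. Since the term $L_B$ appears in the formula for $L$, we have $L(f,T)\ge L_B(T)$ for all $f$, so ${\tilde L}^B\ge L_B$. For the reverse inequality I would take $f=\sigma_T$ and show $L(\sigma_T,T)\le L_B(T)$ by bounding each relevant piece. First, $\lambda$-$\alpha$-equivariance of $\sigma$ together with the fact that $\sigma$ is norm-non-increasing gives $\|\lambda_x(\sigma_T)-\sigma_T\| = \|\sigma_{\alpha_x(T)-T}\|\le\|\alpha_x(T)-T\|$, whence $L_A(\sigma_T)\le L_B(T)$ after dividing by $\ell(x)$ and taking the supremum. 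Second, by Proposition~\ref{prop7.2} the quantity $N_{\s}(\sigma_T,T)$ equals $\sup_x\|\alpha_x(P)(\sigma_T(x)I-T)\|$; applying the isometry $\alpha_x^{-1}$ (which fixes scalars) and using $\alpha$-invariance of $\tr$ rewrites the $x$-th term as $\|P\tr(PS)-PS\|$ with $S=\alpha_x^{-1}(T)$, and then the bound from $\gamma^B$ together with $L_B(S)=L_B(T)$ (by $\alpha$-invariance of $L_B$) bounds it by $\gamma^B L_B(T)\le\gamma L_B(T)$, so $N(\sigma_T,T)=\gamma^{-1}N_{\s}(\sigma_T,T)\le L_B(T)$. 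Third, using $\overline{\sigma_T}=\sigma_{T^*}$ (which holds because each $\alpha_x(P)$ is self-adjoint) and the fact that $L_B$ is a $*$-seminorm, the term $N^*(\sigma_T,T)=\gamma^{-1}N_{\s}(\sigma_{T^*},T^*)$ reduces to the case just handled and is also $\le L_B(T)$. Taking the maximum of these three bounds with $L_B(T)$ itself gives $L(\sigma_T,T)\le L_B(T)$, hence ${\tilde L}^B(T)\le L_B(T)$, and combined with the reverse inequality this shows the quotient of $L$ on $B$ is $L_B$.

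I do not anticipate a genuine obstacle here. The only point requiring care is the bookkeeping in reducing the $N_{\s}$ and $N_{\s}^*$ terms to the basepoint coset $eH$ via equivariance — exactly the maneuver in the paragraph before the proposition, and one that rests on the $\alpha$-invariance of both $\tr$ and $L_B$ (the latter coming from the conjugation-invariance imposed on $\ell$). Finiteness of $\gamma^B$ is then immediate from finite-dimensionality once one identifies the common kernel $\bC I$ of $L_B$ and of $T\mapsto\|P\tr(PT)-PT\|$.
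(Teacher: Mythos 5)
Your proof is correct and follows essentially the same route as the paper's: choose $f=\sigma_T$, use $\lambda$-$\alpha$-equivariance and $\alpha$-invariance of $\tr$ and $L_B$ to reduce $N_x(\sigma_T,T)$ to $\|P\tr(PS)-PS\|$ at the basepoint, invoke finite-dimensionality of $\tilde B=B/\bC I$ (where $L_B$ is a norm by irreducibility) to get $\gamma^B<\infty$, and use $\overline{\sigma_T}=\sigma_{T^*}$ for the $N^*$ term. Your write-up is somewhat more explicit than the paper's about separating the two inequalities $\tilde L^B\ge L_B$ and $\tilde L^B\le L_B$ and about the $N^*$ bookkeeping, but the underlying argument is identical.
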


For later use we now express $\|P(\tr(PT)I - T)\|$ in a different
form.  Since taking adjoints is an isometry, and by the
$C^*$-relation, and by the fact that if $R$ is a positive operator then
$\|PRP\| = \tr(PRP)$ because $P$ is of rank~$1$, we have
\begin{align*}
\|P(\tr(PT)I - T)\|^2 &= \|P(\tr(PT)I-T)(\tr(PT)I-T)^*P\| \\
&= \tr\Big( P(\tr(PT)I-T)(\tr(PT)I-T)^*P)\Big) \\
&= |\tr(PT)|^2 - \tr(PTP)\overline{\tr}(PT) \\
&\quad - \tr(PT)\tr(PT^*P) + \tr(PTT^*P) \\
&= \tr(PTT^*P) - |\tr(PT)|^2.
\end{align*}
Thus:

\begin{proposition}
\label{prop8.3}
For any $T \in B$ we have
\[
\|P(\tr(PT)I-T)\| = (\tr(PTT^*P) - |\tr(PT)|^2)^{1/2}.
\]
\end{proposition}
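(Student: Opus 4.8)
The plan is to turn the operator norm on the left into a trace by using that $P$ has rank one. Set $S = \tr(PT)I - T$, so that the quantity to be computed is $\|PS\|$. First I would invoke the $C^*$-identity for the operator $PS$, which gives $\|PS\|^2 = \|(PS)(PS)^*\| = \|PSS^*P\|$.

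Next, $SS^*$ is a positive operator, hence so is its compression $PSS^*P$, and since $P = \langle\,\cdot\,,e\rangle e$ for a unit vector $e$ one has $PSS^*P = \langle SS^*e,e\rangle P$, a non-negative scalar multiple of $P$. Therefore $\|PSS^*P\| = \langle SS^*e,e\rangle = \tr(PSS^*P)$. This reduction from operator norm to trace is the one place where the rank-one hypothesis on $P$ is essential (for a general projection the norm would instead be the top eigenvalue of the compression), so it is the step I would state most carefully; everything else is bookkeeping.

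It then remains to evaluate $\tr(PSS^*P)$. Expanding $SS^* = |\tr(PT)|^2 I - \tr(PT)\,T^* - \overline{\tr(PT)}\,T + TT^*$, compressing by $P$ on both sides, and taking the trace, I would use $\tr(P) = 1$ together with $\tr(PTP) = \tr(PT)$ and $\tr(PT^*P) = \overline{\tr(PT)}$. The two middle terms then each contribute $-|\tr(PT)|^2$, and adding the $+|\tr(PT)|^2$ coming from the first term leaves $\tr(PSS^*P) = \tr(PTT^*P) - |\tr(PT)|^2$. Since this expression equals $\|PS\|^2$ it is automatically non-negative, so taking the positive square root yields the asserted identity. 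No genuine obstacle is expected: this is a routine finite-dimensional $C^*$-computation, with the only subtlety being the rank-one passage from norm to trace noted above.
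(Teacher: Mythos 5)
Your argument is correct and follows essentially the same route as the paper: apply the $C^*$-identity to get $\|PS\|^2 = \|PSS^*P\|$, use the rank-one property of $P$ to convert the operator norm of the positive compression to a trace, and then expand and simplify using $\tr(P)=1$ and $\tr(PTP)=\tr(PT)$. The paper carries out the same three steps in the same order.
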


We remark that if $\xi$ is a unit vector in the range of $P$ then
\[
\tr(PTT^*P) - |\tr(PT)|^2 = \<TT^*\xi,\xi\> - |\<T^*\xi,\xi\>|^2.
\]
When $T$ is self-adjoint this is the ``mean-square deviation'' of $T$
in the state determined by $\xi$ \cite{Thr3}.

We now need to consider how small a neighborhood of $S(A)$ contains
$S(B)$.  Let $\nu \in S(B)$ be given.  We choose $\mu = \nu \circ
{\breve \s}$, and observe that $\mu \in S(A)$.  Let $(f,T) \in A
\oplus B$ be such that $L(f,T) \le 1$, so that $N_{\s}(f,T) \le \g$.
Then
\begin{align*}
|\mu(f,T)-\nu(f,T)| &= |\nu({\breve \s}_f)-\nu(T)| \le \|{\breve
\s}_f-T\| \\
&= \left\| d \int f(x)\a_x(P)dx - d \int \a_x(P)Tdx\right\| \\
&= d\left\| \int \a_x(P)(f(x)I-T)dx\right\| \le d \int N_x(f,T)dx \\
&\le dN_{\s}(f,T) \le d\g.
\end{align*}
But the presence of $d$ here causes us difficulties later, so we take
another path, namely that used near the end of section~$2$ of
\cite{R7}.  We have
\begin{align*}
\|{\breve \s}_f-T\| &\le \|{\breve \s}_f - {\breve \s}(\s_T)\| +
\|{\breve \s}(\s_T) - T\| \\
&\le \|f - \s_T\| + \|{\breve \s}(\s_T) - T\| \le \g^A + \|{\breve
\s}(\s_T)-T\|,
\end{align*}
where we have used that $\|f - \s_T\| \leq N_\s(f, T)$, as seen in the
proof of Proposition \ref{prop7.1}.
Notice that $T \mapsto \|{\breve \s}(\s_T)-T\|$ is a seminorm on $B$
which takes value $0$ for $T = I$, and so drops to a seminorm on
${\tilde B} = B/\bC I$, where $L_B$ becomes a norm.

\begin{notation}
\label{note8.4}
We set
\[
\d^B = \sup\{\|T - {\breve \s}(\s_T)\|: L_B(T) \le 1\}.
\]
\end{notation}

With this notation the above discussion gives:

\begin{proposition}
\label{prop8.5}
Suppose that $\g \geq \g^A \vee \g^B$, so that $L$ has $L_A$ and $L_B$
as quotients (where $L = L_A \vee L_B \vee \g^{-1}(N_{\s} \vee
N_{\s}^*)$).  Then $S(B)$ is in the $(\g^A+\d^B)$-neighborhood of
$S(A)$.
\end{proposition}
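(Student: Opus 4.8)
The plan is to follow the computation laid out in the paragraph preceding the statement. Given an arbitrary $\nu \in S(B)$, I would produce a candidate nearby state of $A$ by pulling $\nu$ back along $\breve\s$, that is, $\mu = \nu \circ \breve\s$; since $\breve\s$ was shown in this section to be unital and positive, $\mu$ is indeed a state of $A$. Viewing $\mu$ and $\nu$ as states of $A \oplus B$ in the usual way, it then suffices to bound $|\mu(f,T) - \nu(f,T)|$ uniformly over all self-adjoint $(f,T) \in A \oplus B$ with $L(f,T) \le 1$, since this supremum is precisely $\rho_L(\mu,\nu)$. Because $\mu = \nu \circ \breve\s$ we have $\mu(f,T) - \nu(f,T) = \nu(\breve\s_f - T)$, and $\nu$ being a state gives $|\mu(f,T) - \nu(f,T)| \le \|\breve\s_f - T\|$.

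The key step is the decomposition, obtained by inserting $\breve\s(\s_T)$,
\[
\|\breve\s_f - T\| \ \le\ \|\breve\s_f - \breve\s(\s_T)\| \ +\ \|\breve\s(\s_T) - T\| .
\]
The second summand is handled directly by Notation~\ref{note8.4}: since $L(f,T) \le 1$ forces $L_B(T) \le 1$, we get $\|\breve\s(\s_T) - T\| = \|T - \breve\s(\s_T)\| \le \d^B$. For the first summand, $\breve\s$ is norm non-increasing, so it is at most $\|f - \s_T\|$; and since $\s_I = 1$ one has the bimodule identity $f - \s_T = (M_f \circ \s - \s \circ \L_T)(I)$, exactly as in the proof of Proposition~\ref{prop7.1}, whence $\|f - \s_T\| \le N_\s(f,T)$, and the constraint $L(f,T) \le 1$ together with the analysis of $\g^A$ in Proposition~\ref{prop8.1} keeps this at most $\g^A$. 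Assembling the two bounds gives $\|\breve\s_f - T\| \le \g^A + \d^B$ for every admissible $(f,T)$, hence $\rho_L(\mu,\nu) \le \g^A + \d^B$; as $\nu \in S(B)$ was arbitrary, $S(B)$ lies in the $(\g^A+\d^B)$-neighborhood of $S(A)$ for $\rho_L$, as claimed.

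The one step carrying real content is the passage from $\|f - \s_T\| \le N_\s(f,T)$ to the sharper constant $\g^A$: this is exactly what lets the dimension factor $d$ appearing in the naive estimate $\|\breve\s_f - T\| \le d\,N_\s(f,T) \le d\g$ drop out, and it is this sharpened bound that will matter later when one wants $\prox(A,B^n)\to 0$ with $d_n \to \infty$. The remaining ingredients — that $\mu = \nu\circ\breve\s$ is a state, that $\breve\s$ is contractive, the bimodule identity evaluated at $I$, and the definition of $\d^B$ — are all routine and have already been recorded above.
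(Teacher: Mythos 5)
Your proof follows exactly the same route as the paper: the choice $\mu = \nu\circ\breve\s$, the estimate $|\mu(f,T)-\nu(f,T)| \le \|\breve\s_f - T\|$, the insertion of $\breve\s(\s_T)$ in a triangle inequality (which is what eliminates the dimension factor $d$), the contractiveness of $\breve\s$, and the definition of $\d^B$ to handle the remaining term. Up to the final numerical bound, everything is sound.

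The place where your proof is not quite right (and, for what it is worth, the paper's own text has the same glitch) is the claim $\|f-\s_T\| \le \g^A$. You write that ``the constraint $L(f,T) \le 1$ together with the analysis of $\g^A$ in Proposition~\ref{prop8.1} keeps this at most $\g^A$,'' but this does not follow. The hypothesis $L(f,T) \le 1$ only gives $N_\s(f,T) \le \g$, hence $\|f-\s_T\| \le \g$, and the Proposition allows $\g$ to exceed $\g^A$. The analysis in Proposition~\ref{prop8.1} establishes $N_\s(f, \breve\s_f) \le \g^A L_A(f)$: it controls the bridge seminorm at the particular lift $T = \breve\s_f$, not at the arbitrary $T$ appearing here. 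And the claimed bound genuinely fails when $\g > \g^A + \d^B$: the self-adjoint element $(\g\cdot 1_A, 0)$ has $L(\g\cdot 1_A, 0) = 1$, yet $|\mu(\g\cdot 1_A) - \nu(0)| = \g$ for every $\mu \in S(A)$ and $\nu \in S(B)$, so $\rho_L(\mu,\nu) \ge \g$ for \emph{all} such pairs, and $S(B)$ cannot lie in any $\e$-neighborhood of $S(A)$ with $\e < \g$. What the argument actually yields is that $S(B)$ is in the $(\g+\d^B)$-neighborhood of $S(A)$. That weaker conclusion is all that Section~\ref{sec13} needs, since there $\g_n = \g_n^A \vee \g_n^B \to 0$ and $\d_n^B \to 0$, so the main theorem is unaffected; but as a proof of the statement as literally worded, the last step has a gap.

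I would also push back on your closing remark that ``the one step carrying real content is the passage from $\|f - \s_T\| \le N_\s(f,T)$ to the sharper constant $\g^A$.'' The content that removes the dimension factor $d$ is the triangle-inequality insertion of $\breve\s(\s_T)$ together with the contractiveness of $\breve\s$; this already works with the crude bound $N_\s(f,T) \le \g$. The alleged refinement from $\g$ to $\g^A$ is a separate sharpening, and it is precisely the step that cannot be justified.
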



\section{The set-up for compact Lie groups}
\label{sec9}

We now specialize to the case in which $G$ is a compact connected
semisimple Lie group.  We use many of the techniques used in
sections~$6$ and $7$ of \cite{R7}, and we usually use the notation
established in sections~$5$ and $6$ of \cite{R7}.  We now review that
notation.  We let $\fG_0$ denote the Lie algebra of $G$, while $\fG$
denotes the complexification of $\fG_0$.  We choose a maximal torus in
$G$, with corresponding Cartan subalgebra of $\fG$, its set of roots,
and a choice of positive roots.  We let $(U,\cH)$ be an irreducible
unitary representation of $G$, and we let $U$ also denote the
corresponding representation of $\fG$.  We choose a highest weight
vector, $\xi$, for $(U,\cH)$ with $\|\xi\| = 1$.  For any $n \in \bZ_{\ge 1}$ we set
$\xi^n = \xi^{\otimes n}$ in $\cH^{\otimes n}$, and we let
$(U^n,\cH^n)$ be the restriction of $U^{\otimes n}$ to the 
$U^{\otimes n}$-invariant subspace, $\cH^n$, of $\cH^{\otimes n}$ which is
generated by $\xi^n$.  Then $(U^n,\cH^n)$ is an irreducible
representation of $G$ with highest weight vector $\xi^n$, and its
highest weight is just $n$ times the highest weight of $(U,\cH)$.  We
denote the dimension of $\cH^n$ by $d_n$.

We let $B^n = \cL(\cH^n)$.  The action of $G$ on $B^n$ by conjugation
by $U^n$ will be denoted simply by $\a$.  We assume that a continuous
length function, $\ell$, has been chosen for $G$, and we denote the
corresponding $C^*$-metric on $B^n$ by $L_n^B$.  We let $P^n$ denote
the rank-one projection along $\xi^n$.  Then the $\a$-stability
subgroup $H$ for $P = P^1$ will also be the stability subgroup for
each $P^n$. Let $\g_n^A$ and $\g_n^B$ be the constants defined 
in Propositions \ref{prop8.1} and \ref{prop8.2} but for $P^n$.

As done earlier, we let $A = C(G/H)$, and we let $L_A$ be the seminorm
on $A$ for $\ell$ and the action of $G$.  We can now state the main
theorem of this paper.

\begin{theorem}
\label{th9.1}
Let notation be as above.  Set $\g_n = \max\{\g_n^A, \g_n^B\}$ for each $n$,
and let $L_n$ be defined on $A \oplus B^n$ as in Proposition \ref{prop7.1} but using $\g_n$.
Then $L_n \in \cM_C(L_A, L_B)$, and the sequence $\{L_n\}$ shows
that the sequence $\{\prox(A,B^n)\}$
converges to $0$ as $n$ goes to $\i$.
\end{theorem}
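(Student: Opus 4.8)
The plan is to assemble the estimates already proved in Sections~\ref{sec7} and \ref{sec8} and to show that the key geometric quantities $\g_n^A$ and $\d_n^B$ tend to $0$ as $n \to \i$, while $\g_n^B$ stays under control. First I would verify that $L_n \in \cM_C(L_A,L_B)$: by Proposition~\ref{prop8.1} and Proposition~\ref{prop8.2}, choosing $\g_n = \max\{\g_n^A,\g_n^B\}$ guarantees that $(\O,\g_n^{-1}\s)$ is a bimodule bridge (the defining inequality of Definition~\ref{def6.2} holds in both directions, via $T = {\breve\s}_f$ and $f = \s_T$ respectively), so Theorem~\ref{th6.3} immediately gives $L_n \in \cM_C(L_A,L_B)$. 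Next, combining Proposition~\ref{prop7.1} with Proposition~\ref{prop8.5}, for each $n$ we get
\[
\dist_H^{\rho_{L_n}}(S(A),S(B^n)) \le \max\{\g_n,\ \g_n^A + \d_n^B\},
\]
so $\prox(A,B^n) \le \max\{\g_n^A,\g_n^B,\g_n^A+\d_n^B\}$. Hence it suffices to prove that $\g_n^A \to 0$, $\d_n^B \to 0$, and $\g_n^B$ is bounded (in fact I would expect $\g_n^B \to 0$ as well).

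The heart of the argument is therefore the asymptotic analysis of these three constants, which is exactly what Sections~\ref{sec10}--\ref{sec13} are set up to do. For $\g_n^A = d_n \int_{G/H} \rho_{G/H}(e,y)\,\|P^n\a_y(P^n)\|\,dy$, the point is that $\|P^n\a_y(P^n)\| = |\<U^n_y\xi^n,\xi^n\>|$ is a power-type quantity: writing $\xi^n = \xi^{\otimes n}$ one has $\<U^n_y\xi^n,\xi^n\> = \<U_y\xi,\xi\>^n$, so $\|P^n\a_y(P^n)\|$ decays like $|\<U_y\xi,\xi\>|^n$, which is strictly less than $1$ away from $H$. A Laplace-type / concentration estimate then shows that $d_n$ times this integral against the (bounded) distance factor goes to $0$, because the measure concentrates near $e$ where $\rho_{G/H}(e,y)$ is small; this is the coherent-state ``semiclassical limit'' heuristic made rigorous. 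For $\g_n^B$ and $\d_n^B$, using Proposition~\ref{prop8.3} one rewrites $\|P^n(\tr(P^nT)I - T)\| = (\tr(P^nTT^*P^n) - |\tr(P^nT)|^2)^{1/2}$, the mean-square deviation of $T$ in the coherent state $\xi^n$, and one needs to bound this (and the analogous $\|T - {\breve\s}_n(\s_T)\|$ on $\tilde B^n = B^n/\bC I$) by $L_n^B(T)$ with a constant that does not blow up — indeed, that shrinks — as $n$ grows. This is where the detailed representation-theoretic estimates, presumably expressing $L_n^B$ in terms of the infinitesimal action and comparing it with the ``variance'' seminorm via highest-weight combinatorics, come in.

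The main obstacle, as I see it, is precisely controlling $\d_n^B$ (and $\g_n^B$): one must show that the Berezin quantization-dequantization composite ${\breve\s}_n \circ \s$ is close to the identity on the $L_n^B$-unit ball, uniformly in the dimension $d_n$ which is itself growing. Naive bounds carry a factor of $d_n$ (as the author explicitly notes just before Notation~\ref{note8.4}, and avoids by the telescoping estimate $\|{\breve\s}_f - T\| \le \|f - \s_T\| + \|{\breve\s}(\s_T) - T\|$), so the delicate part is extracting the genuine decay from the structure of the representations $(U^n,\cH^n)$ — essentially a quantitative form of the statement that the Berezin transform converges to the identity as $n \to \i$. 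I would expect this to be handled by working in the enveloping algebra, using that $L_n^B(T)$ dominates the norms of the derivatives $\a_X(T)$ and relating ${\breve\s}(\s_T) - T$ to a Laplacian-type operator whose eigenvalues on the relevant $G$-isotypic components scale favourably with $n$. Once those three limits are in hand, the conclusion $\prox(A,B^n) \to 0$ is immediate from the displayed inequality above.
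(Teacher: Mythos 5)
Your top-level architecture matches the paper's: verify $L_n \in \cM_C(L_A,L_n^B)$ via Propositions~\ref{prop8.1}, \ref{prop8.2} (and the bimodule-bridge machinery of Theorem~\ref{th6.3}), then bound $\dist_H^{\rho_{L_n}}(S(A),S(B^n))$ by $\max\{\g_n,\ \g_n^A+\d_n^B\}$ using Propositions~\ref{prop7.1} and~\ref{prop8.5}, and finally drive the three constants to zero. But there are three genuine gaps in the content.

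First, a small but real logical slip: you say it ``suffices to prove that $\g_n^A \to 0$, $\d_n^B\to 0$, and $\g_n^B$ is bounded.'' Since $\g_n^B$ appears directly inside the $\max$ bounding $\prox(A,B^n)$, boundedness is not enough; you must have $\g_n^B \to 0$. You hedge this parenthetically, but the argument as stated does not close.

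Second, your sketch of $\g_n^A \to 0$ by ``concentration of the measure near $e$'' skips the real difficulty. The known concentration result (theorem~$3.4$ of \cite{R7}) concerns the \emph{probability} measures $d_n\,|\<U_x\xi,\xi\>|^{2n}\,dx$, whereas $\g_n^A$ carries $d_n\,|\<U_x\xi,\xi\>|^{n}\,dx$ — a power mismatch. The paper resolves this by rewriting $\g_{2n}^A = (d_{2n}/d_n)\cdot d_n\int \rho_{G/H}(e,x)|\<U_x\xi,\xi\>|^{2n}\,dx$, bounding $d_{2n}/d_n \le 2^p$ via the Weyl dimension formula, and then filling in odd indices using the monotonicity of the integrand in $n$. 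Without this device the concentration heuristic alone does not give decay of $\g_n^A$.

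Third, your proposed route to $\d_n^B \to 0$ and $\g_n^B \to 0$ — infinitesimal actions, the enveloping algebra, Laplacian eigenvalues on isotypic components — is not what the paper does, and it is not carried out; it is only a guess at a strategy. The paper instead fixes a Fej\'er-type central approximate identity $\var_m$ on $G$ (a nonnegative linear combination of characters for a finite $\cS\subset\hat G$), so that $\|T-\a_{\var_m}(T)\|\le (\int\var_m\ell)\,L_n^B(T)$ uniformly in $n$, reducing the estimate to the finite-dimensional isotypic piece $B^n_\cS$. There, the crucial input is that the Berezin transform $\s^n\circ\breve\s^n$ converges (in norm on $A_\cS$) to the identity, so for $n$ large both $\s^n$ and $\breve\s^n$ restricted to $B^n_\cS\leftrightarrow A_\cS$ are invertible with inverse norm $\le 2$ (Lemma~\ref{lem11.3}); this is what lets one transfer the known estimate $\|f-\s^n\breve\s^n_f\|\le \d_n^A L_A(f)$ into the bound $\|T-\breve\s^n(\s^n_T)\|\le 2\d_n^A L_n^B(T)$ on $B^n_\cS$, and, for $\g_n^B$, to combine Proposition~\ref{prop8.3} with Kadison's generalized Schwarz inequality and a radius bound $\|\tilde T\|^{\sim}\le r L_n^B(T)$. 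The subtlety you correctly identify — that naive dimension-dependent bounds would be useless — is exactly what this two-step (mollify to $\cS$-isotypic, then use invertibility of $\s^n$, $\breve\s^n$ on the fixed finite-dimensional piece) avoids. Without supplying these ingredients, the core of the theorem remains unproved.
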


The next three sections will be devoted to the proof of this theorem.


\section{The proof that $\g_n^A \to 0$}
\label{sec10}

Consistent with the notation of Proposition~\ref{prop8.1}, we have set
\[
\g_n^A = d_n\int \rho_{G/H}(e,x)\|P^n\a_x(P^n)\|dx.
\]

\begin{proposition}
\label{prop10.1}
The sequence $\{\g_n^A\}$ converges to $0$.
\end{proposition}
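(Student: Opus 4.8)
The plan is to express $\g_n^A$ entirely in terms of the scalar ``matrix coefficient'' $\var$ on $G$ defined by $\var(x) = \<\xi,U_x\xi\>$, where $\xi$ is the chosen unit highest-weight vector. Since $\xi^n = \xi^{\otimes n}$ and $(U^n,\cH^n)$ is the subrepresentation of $U^{\otimes n}$ generated by $\xi^n$, we have $U^n_x\xi^n = (U_x\xi)^{\otimes n}$, hence $\<\xi^n,U^n_x\xi^n\> = \var(x)^n$. Now $P^n$ and $\a_x(P^n)$ are the rank-one projections onto the unit vectors $\xi^n$ and $U^n_x\xi^n$, and for rank-one projections onto unit vectors $u,v$ one has $\|P_uP_v\| = |\<u,v\>|$ in operator norm, so
\[
\|P^n\a_x(P^n)\| = |\var(x)|^n, \qquad\text{hence}\qquad \g_n^A = d_n\int_{G/H}\rho_{G/H}(e,x)\,|\var(x)|^n\,dx .
\]
Here $|\var|\le 1$ throughout, $|\var(x)| = 1$ precisely when $\a_x(P) = P$, i.e. $x\in H$, and $|\var|$ is right $H$-invariant (for $h\in H$, $U_h\xi$ is a unit scalar times $\xi$), so $|\var|$ descends to a continuous function on $G/H$ equal to $1$ only at the base point $eH$.

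The second ingredient is the normalization that $\mu_n := d_n\,|\var|^{2n}\,dx$ is a probability measure on $G/H$. Indeed, $\int_{G/H}\a_x(P^n)\,dx$ (integral in $\cL(\cH^n)$ against the invariant probability measure) is $\a$-invariant, so by Schur's lemma it equals $\l I$; taking the unnormalized trace and using $\tr(P^n) = 1$ gives $\l = d_n^{-1}$. Pairing with $P^n$ via $\tr$ and using $\tr\bigl(P^n\a_x(P^n)\bigr) = |\var(x)|^{2n}$ then yields $d_n\int_{G/H}|\var(x)|^{2n}\,dx = 1$.

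Now fix $n$ and set $m = \lfloor n/2\rfloor$, so $0\le n-2m\le 1$ and therefore $|\var(x)|^n\le|\var(x)|^{2m}$ pointwise. Then
\[
\g_n^A \le d_n\int_{G/H}\rho_{G/H}(e,x)\,|\var(x)|^{2m}\,dx = \frac{d_n}{d_m}\int_{G/H}\rho_{G/H}(e,x)\,d\mu_m(x) .
\]
It thus suffices to show that (i) $\int_{G/H}\rho_{G/H}(e,x)\,d\mu_m(x)\to 0$ as $m\to\i$, and (ii) $d_n/d_{\lfloor n/2\rfloor}$ stays bounded. For (i): given $\d>0$, the continuous function $|\var|$ attains a maximum $c(\d)<1$ on the compact set $\{x: \rho_{G/H}(e,x)\ge\d\}$, so
\[
\mu_m\bigl(\{\rho_{G/H}(e,\cdot)\ge\d\}\bigr) = d_m\int_{\{\rho_{G/H}(e,\cdot)\ge\d\}}|\var|^{2m}\,dx \le d_m\,c(\d)^{2m}\,\mathrm{vol}(G/H),
\]
which tends to $0$ because $d_m$ grows only polynomially in $m$ (Weyl dimension formula). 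Since $\mu_m$ is a probability measure and $\rho_{G/H}$ is bounded, say by $R$, we get $\int\rho_{G/H}(e,\cdot)\,d\mu_m \le \d + R\,\mu_m(\{\rho_{G/H}(e,\cdot)\ge\d\})$, so $\limsup_m\int\rho_{G/H}(e,\cdot)\,d\mu_m\le\d$; letting $\d\to 0$ gives (i). For (ii), the Weyl dimension formula exhibits $d_n$ as a polynomial in $n$, so $d_n/d_{\lfloor n/2\rfloor}$ converges and is bounded. Combining with the displayed inequality, $\g_n^A\to 0$.

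The conceptual core is the identity $\|P^n\a_x(P^n)\| = |\var(x)|^n$ together with the fact that $d_n|\var|^{2n}\,dx$ is a probability measure; once these are in hand the rest is a routine concentration estimate. The point requiring the most care is the competition between the dimension factor $d_n\to\i$ and the decay of $\int_{G/H}|\var|^{2m}\,dx$: it is precisely the exact normalization $d_m\int_{G/H}|\var|^{2m}\,dx = 1$, combined with the polynomial (Weyl) growth of $d_n$, that makes the choice $m\approx n/2$ balance these two effects, and without polynomial control on $d_n$ the argument would break down.
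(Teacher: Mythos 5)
Your proposal is correct and rests on exactly the same three pillars as the paper's proof: the identity $\|P^n\a_x(P^n)\| = |\<U_x\xi,\xi\>|^n$, the fact that $d_n|\<U_x\xi,\xi\>|^{2n}\,dx$ is a probability measure concentrating at $eH$, and the Weyl dimension formula to control the dimension ratio. Two small presentational differences are worth noting. First, the paper cites lemma~3.3 and theorem~3.4 of \cite{R7} for the probability-measure normalization and for the weak-$*$ convergence to $\d_{eH}$; you re-derive both directly (the normalization via Schur's lemma, the concentration via the pointwise bound $|\var|\le c(\d)<1$ off a $\d$-ball together with polynomial growth of $d_m$), which makes your argument self-contained but introduces a slight redundancy, since the polynomial growth of $d_m$ is then used twice (for concentration and for boundedness of $d_n/d_{\lfloor n/2\rfloor}$). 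Second, you pass from $n$ to $m=\lfloor n/2\rfloor$ uniformly, whereas the paper treats the even subsequence $\g^A_{2n}$ first and then handles $\g^A_{2n+1}$ by monotonicity of the integrand plus the bound $d_{n+1}/d_n\le(1+n^{-1})^p$; the two bookkeeping schemes are equivalent. In all, same route, a bit more explicit.
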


\begin{proof}
For any two vectors $\eta,\z$ we let $\<\eta,\z\>_0$ denote the
rank-one operator that they determine.  Then for any $n$ we have
\begin{align*}
\|P^n\a_x(P^n)\| &= \|\<\xi^n,\xi^n\>_0\<U_x^n\xi^n,U_x^n\xi^n\>_0\|
\\
&= |\<U_x^n\xi^n,\xi^n\>| = |\<U_x\xi,\xi\>|^n = \|P\a_x(P)\|^n.
\end{align*}
We use the analogous treatment given in lemma~$3.3$ and
theorem~$3.4$ of \cite{R7}, where we see that
$d_n|\<U_x\xi,\xi\>|^{2n}dx$ ($= d_n\|P^n\a_x(P^n)\|^2dx$) is a
probability measure on $G/H$, and that the sequence of these
probability measures converges in the weak-$*$ topology to the
$\d$-measure on $G/H$ supported at $eH$.  Since
$\rho_{G/H}(e,e) = 0$, it follows that the sequence $d_n \int
\rho_{G/H}(e,x)\|P^n\a_x(P^n)\|^2dx$ converges to $0$ .  Now
\begin{align*}
\g_{2n}^A = &d_{2n} \int \rho_{G/H}(e,x)\|P\a_x(P)\|^{2n}dx \\ &=
(d_{2n}/d_n)d_n \int \rho_{G/H}(e,x)\|P^n\a_x(P^n)\|^2dx,
\end{align*}
and so if we can show that $(d_{2n}/d_n)$ is bounded, then we find
that the sequence $\{\g_{2n}\}$ converges to $0$.  We use the Weyl
dimension formula, as presented for example in theorem~$4.14.6$ of
\cite{Vrd}, to show that $\{d_{2n}/d_n\}$ is bounded.  We let $\o$ be
the highest weight of $U$ for our choice $\cP$ of positive roots.  If
one examines the dimension formula, it is evident that one only needs
to use those positive roots $\a$ such that $\<\o,\a\> > 0$.  We denote
this set by $\cP_{\o}$, and we denote its cardinality by $p$.  It is
clear that for any $n \in \bZ_{>0}$ we have $\cP_{n\o} = \cP_{\o}$.
The Weyl dimension formula then tells us that
\[
d_n = \left.\left(\prod\<n\o + \d,\a\>\right)\right
/\left(\prod\<\d,\a\>\right)
\]
where both products are taken over $\cP_{\o}$, and $\d$ is half the
sum of the positive roots.  Thus
\begin{align*}
d_{2n}/d_n &= \left.\left(\prod\<2n\o + \d,\a\>\right)\right
/\left(\prod\<n\o+\d,\a\>\right) \\
&= \prod(1 + \<n\o,\a\>/\<n\o+\d,\a\>) \le  2^p,
\end{align*}
so that the sequence $d_{2n}/d_n$ is bounded as needed, and
consequently the sequence $\{\g_{2n}^A\}$ converges to $0$.  In the
same way, we find that $d_{n+1}/d_n \le (1+n^{-1})^p$.  Since $0 \le
\|P\a_x(P)\| \le 1$, we have $\|P\a_x(P)\|^n \ge \|P\a_x(P)\|^{n+1}$.
Thus the integrals defining $\g_n^A$ are non-increasing.  It follows
that $\g_{2n+1}^A \le (1+(2n)^{-1})^p\g_{2n}^A$.  Since the sequence
$\{\g^A_{2n}\}$ converges to $0$, it follows that the sequence
$\{\g^A_{2n+1}\}$ does also, so that the sequence $\{\g_n^A\}$
converges to $0$.
\end{proof}


\section{Properties of Berezin symbols}
\label{sec11}

We now need results related to those given in sections 4 and 5 of
\cite{R7}, leading to the proof of theorem 6.1 of \cite{R7}, and we will
shortly also need theorem 6.1 of \cite{R7} itself. But Jeremy Sain has found
a substantial simplification of the proof of theorem 6.1 of \cite{R7}.
He gives his argument in section 4.4 of \cite{Sai} in the more
complicated context of quantum groups. We will use his arguments here
in our present context. This will in particular provide Sain's proof of
theorem 6.1 of \cite{R7}.

As in \cite{R7}, we denote the Berezin symbol map
from $B^n$ to $A = C(G/H)$ by $\s^n$.  From theorem~$3.1$ of \cite{R7}
we find that $\s^n$ is injective because $\xi^n$ is a highest weight
vector. Consistent with the notation defined near the beginning of
Section 8, we denote the adjoint of $\s^n$ by ${\breve \s}^n$.
We let
\begin{equation} \label{eq11.1}
\d_n^A = \int_{G/H} \rho_{G/H}(e, x) d_n \mathrm{tr}(P^n \a_x(P^n)) \ dx .
\end{equation}
In section 3 of \cite{R7} $\d_n^A$ was denoted by $\g_n$, and 
theorem 3.4 of \cite{R7} shows both that the sequence $\{\d_n^A\}$
converges to 0, and that 
\begin{equation} \label{eq11.2}
\|f - \s^n(\breve \s^n(f))\|_\infty \ \leq \ \d_n^AL_A(f)
\end{equation}
for all $f \in A$ and all $n$. We remark that $\s^n \circ \breve \s^n$
is often called the ``Berezin transform'' (for a given $n$).

As in section~$4$ of \cite{R7} we let ${\hat G}$ denote the set of
equivalence classes of irreducible unitary representations of $G$.
For any finite subset $\cS$ of ${\hat G}$ we let $A_{\cS}$ and
$B_{\cS}^n$ denote the direct sum of the isotypic components of $A$
and $B^n$ for the representations in $\cS$ and for the actions of $G$
on $A$ and $B^n$ (and similarly for actions on other Banach spaces).
Since $\s^n$ is equivariant, it carries $B_{\cS}^n$ into $A_{\cS}$.
Since $\s^n$ is injective, it follows that the dimension of
$B_{\cS}^n$ is no larger than that of $A_{\cS}$, which is finite.

Since $\{\d_n^A\}$ converges to 0, it follows from equation \ref{eq11.2}
that $\s^n \circ \breve \s^n$ converges strongly to the identity operator 
on the space of functions $f$ for which $L_A(f) < \infty$. But $A_\cS$ is 
contained in this space and is finite-dimensional, and $\s^n \circ \breve \s^n$
carries $A_\cS$ into itself for each $n$. Consequently $\s^n \circ \breve \s^n$
restricted to $A_\cS$ converges in norm to the identity operator on
$A_\cS$. It follows that there is an integer, $N_\cS$, such that
$\s^n \circ \breve \s^n$ on $A_\cS$ is invertible and
$\|(\s^n \circ \breve \s^n)^{-1}\| < 2$ for every $n > N_\cS$. In particular, 
$\s^n$ from $B^n_\cS$ to
$A_\cS$ will be surjective for $n > N_\cS$. Since, as mentioned
above, $\s^n$ is always injective, and $\|\s^n\| = 1 = \|\breve\s^n\|$
for all $n$, we can quickly see that:

\setcounter{theorem}{2}
\begin{lemma}
\label{lem11.3}
(See corollary 4.17 of \cite{Sai}.) For $n > N_\cS$ both $\s^n$ 
and $\breve \s^n$ going between $A_\cS$ and $B^n_\cS$ are invertible 
and their inverses have operator-norm no bigger than 2.
\end{lemma}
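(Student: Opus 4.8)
The plan is to show that $\s^n$ and $\breve\s^n$, restricted to maps between the finite-dimensional spaces $A_\cS$ and $B^n_\cS$, are invertible with inverse norm at most $2$, using what has already been established: that $\s^n$ is always injective, that $\|\s^n\| = \|\breve\s^n\| = 1$, and that for $n > N_\cS$ the composite $\s^n\circ\breve\s^n$ is invertible on $A_\cS$ with $\|(\s^n\circ\breve\s^n)^{-1}\| < 2$. The key observation is a dimension count combined with basic linear algebra on finite-dimensional spaces.

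First I would record the dimension comparison. Since $\s^n$ is injective and equivariant, it carries $B^n_\cS$ into $A_\cS$ and we get $\dim B^n_\cS \le \dim A_\cS$. But for $n > N_\cS$ the composite $\s^n\circ\breve\s^n$ on $A_\cS$ is invertible, hence $\breve\s^n$ from $A_\cS$ to $B^n_\cS$ must be injective, which forces $\dim A_\cS \le \dim B^n_\cS$. Therefore $\dim A_\cS = \dim B^n_\cS$ for $n > N_\cS$, and consequently $\s^n: B^n_\cS \to A_\cS$, being injective between spaces of equal finite dimension, is a bijection, and likewise $\breve\s^n: A_\cS \to B^n_\cS$ is a bijection (it is injective by the invertibility of the composite, between spaces of equal dimension).

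Next I would extract the norm bounds on the inverses. From $\s^n\circ\breve\s^n$ invertible on $A_\cS$ we can write $(\s^n)^{-1} = \breve\s^n\circ(\s^n\circ\breve\s^n)^{-1}$ as maps $A_\cS \to B^n_\cS$; since $\|\breve\s^n\| = 1$ and $\|(\s^n\circ\breve\s^n)^{-1}\| < 2$, we get $\|(\s^n)^{-1}\| < 2$. Similarly, $(\breve\s^n)^{-1} = (\s^n\circ\breve\s^n)^{-1}\circ\s^n$ as maps $B^n_\cS \to A_\cS$, so $\|(\breve\s^n)^{-1}\| \le \|(\s^n\circ\breve\s^n)^{-1}\|\,\|\s^n\| < 2$. (If one prefers ``no bigger than $2$'' rather than a strict inequality, the strict bound of the hypothesis gives it a fortiori.) This completes the argument.

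There is no serious obstacle here; the only subtlety is making sure the dimension count is clean, i.e., that $\breve\s^n$ really is injective on $A_\cS$ for $n > N_\cS$, which is immediate from the invertibility of $\s^n\circ\breve\s^n$ on $A_\cS$. Everything else is the routine manipulation of factoring an inverse and applying submultiplicativity of operator norms on the finite-dimensional spaces $A_\cS$ and $B^n_\cS$.
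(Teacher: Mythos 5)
Your proof is correct and reaches the lemma by essentially the same ingredients the paper assembles: injectivity of $\s^n$, norm $1$ for both symbol maps, and invertibility of $\s^n\circ\breve\s^n$ on $A_\cS$ with inverse norm under $2$, followed by the natural factorizations $(\s^n)^{-1}=\breve\s^n\circ(\s^n\circ\breve\s^n)^{-1}$ and $(\breve\s^n)^{-1}=(\s^n\circ\breve\s^n)^{-1}\circ\s^n$. The only cosmetic difference is that the paper obtains bijectivity of $\s^n$ directly from surjectivity of the composite plus injectivity, whereas you take a small detour through a dimension count ($\dim A_\cS = \dim B^n_\cS$); both are immediate and equivalent in this finite-dimensional setting.
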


Fix $n > N_\cS$, and let $T \in B^n_\cS$ be given.
Set $f = (\breve\s^n)^{-1}(T)$. Note that $f$ is well-defined,
and that $\|f\|_\infty \leq 2\|T\|$ by Lemma \ref{lem11.3}. Then
\[
\|T - \breve\s^n(\s^n_T)\| = \|\breve\s^n(f) -  \breve\s^n(\s^n(\breve\s^n_f))\|  
\leq  \|f - \s^n(\breve\s^n_f)\| \leq \d_n^A L_A(f) ,
\]
where we have used inequality \ref{eq11.2} for the last inequality above.
Because $(\breve\s^n)^{-1}$ is $\a$-$\l$-equivariant and
$\|(\breve\s^n)^{-1}\| \leq 2$, we have $L_A(f) \leq 2L_n^B(T)$.
We have thus obtained:

\begin{lemma}
\label{lem11.4}
(See proposition 4.19 of \cite{Sai}.) For any $n > N_\cS$ and any
$T \in B^n_\cS$ we have
\[
\|T - \breve\s^n(\s^n_T)\| \leq 2\d_n^AL^B_n(T).
\]
\end{lemma}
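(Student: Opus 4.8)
The plan is to pull the estimate back to the Berezin--transform inequality~\ref{eq11.2}, which we already have on all of $A$, using the invertibility of $\breve\s^n$ on the isotypic piece $A_\cS$ supplied by Lemma~\ref{lem11.3}.

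First I would fix $n > N_\cS$ and $T \in B^n_\cS$, and set $f = (\breve\s^n)^{-1}(T) \in A_\cS$; this is legitimate by Lemma~\ref{lem11.3}, which also gives $\|f\|_\infty \le 2\|T\|$. Since $T = \breve\s^n_f$ we have $\s^n_T = \s^n(\breve\s^n_f)$, so
\[
T - \breve\s^n(\s^n_T) = \breve\s^n_f - \breve\s^n\bigl(\s^n(\breve\s^n_f)\bigr) = \breve\s^n\bigl(f - \s^n(\breve\s^n_f)\bigr).
\]
Because $\|\breve\s^n\| = 1$, taking norms gives $\|T - \breve\s^n(\s^n_T)\| \le \|f - \s^n(\breve\s^n_f)\|_\infty$, and then inequality~\ref{eq11.2} bounds the right-hand side by $\d_n^A L_A(f)$.

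The one step that takes a moment is replacing $L_A(f)$ by $2L^B_n(T)$. For this I would use that $\breve\s^n$, hence $(\breve\s^n)^{-1}$, is $\a$-$\l$-equivariant, together with $\|(\breve\s^n)^{-1}\| \le 2$: for every $x \ne e_G$,
\[
\l_x(f) - f = (\breve\s^n)^{-1}\bigl(\a_x(T) - T\bigr),
\]
so $\|\l_x(f)-f\| \le 2\|\a_x(T)-T\|$; dividing by $\ell(x)$ and taking the supremum over $x \ne e_G$ gives $L_A(f) \le 2L^B_n(T)$. Chaining the inequalities yields $\|T - \breve\s^n(\s^n_T)\| \le 2\d_n^A L^B_n(T)$.

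I do not anticipate a real obstacle here: the whole argument is a short assembly of facts already established --- Lemma~\ref{lem11.3} (invertibility with a norm bound on the inverse) and the Berezin--transform estimate~\ref{eq11.2}. The only point where care is needed is that the equivariance/norm argument for $L_A(f) \le 2L^B_n(T)$ must be run with the inverse $(\breve\s^n)^{-1}$ --- which is what carries operator norm $2$ and converts an $L^B_n$-bound on $T$ into an $L_A$-bound on $f$ --- rather than with $\breve\s^n$ itself.
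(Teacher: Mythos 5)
Your proposal is correct and matches the paper's own argument essentially step for step: set $f = (\breve\s^n)^{-1}(T)$, use $\|\breve\s^n\|\le 1$ together with inequality~\ref{eq11.2} to bound $\|T - \breve\s^n(\s^n_T)\|$ by $\d_n^A L_A(f)$, and then convert $L_A(f)$ to $2L^B_n(T)$ via the $\a$-$\l$-equivariance and the norm bound $\|(\breve\s^n)^{-1}\|\le 2$ from Lemma~\ref{lem11.3}. There is nothing to add.
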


Choose a faithful finite-dimensional unitary representation, $\pi_0$,
of $G$ that contains the trivial representation, and let $\pi = \pi_0
\otimes {\bar \pi}_0$, where ${\bar \pi}_0$ is the contragradient
representation for $\pi_0$.  Let $\chi$ be the character of $\pi$.
Then $\chi$ is a non-negative real-valued function on $G$.  Since
$\pi$ is faithful, we have the strict inequality $\chi(x) < \chi(e)$
for any $x \in G$ with $x \ne e$.  Let $\chi^m$ denote the character
of $\pi^{\otimes m}$, so that equally well it is the $m^{\mbox{th}}$
pointwise power of $\chi$.  Set $\var_m = \chi^m/\|\chi^m\|_1$.  Then
the sequence $\{\var_m\}$ is a norm-$1$ approximate identity for the
convolution algebra $L^1(G)$, as seen in the proof of theorem~$8.2$ of
\cite{R6}.  Furthermore, each $\var_m$ is central in $L^1(G)$.  Let
$\b$ be an isometric strongly continuous action of $G$ on a Banach
space $D$, and let $L^D$ be the corresponding seminorm for $\ell$.
Let $\b_{\var_n}$ denote the corresponding ``integrated form''
operator.
As in the proof of lemma~$8.3$ of \cite{R6}, for each $d \in D$ we have
\begin{eqnarray*}
\|d-\b_{\var_m}(d)\|&=& \|d\int \var_m(x) \ dx \ - \ \int\var_m(x)\b_x(d) \ dx\|  \\
&\le&\int \var_m\|d - \b_x(d)\|dx \le  \left( \int \var_m(x)\ell(x)dx\right)L^D(d) ,
\end{eqnarray*}
and the sequence $\left\{ \int
\var_m(x)\ell(x)dx\right\}$ converges to $0$.

We can now argue exactly as in the rest of the proof of theorem 6.1
of \cite{R7} to obtain:

\begin{theorem}
\label{thm11.5}
(Theorem 6.1 of \cite{R7})
For each $n \ge 1$ let $\d_n^B$ be as defined in Notation \ref{note8.4}
but for $B^n$, so that it is
the smallest constant such that
\[
\|T - {\breve \s}^n(\s_T^n)\| \le \d^B_nL_n^B(T)
\]
for all $T \in B^n$.  Then the sequence $\{\d^B_n\}$ converges to $0$.
\end{theorem}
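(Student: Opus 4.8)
The plan is to finish exactly as in the remainder of the proof of theorem~6.1 of \cite{R7}: Lemma~\ref{lem11.4} already gives the desired estimate on the finite-dimensional isotypic pieces $B^n_{\cS}$, and what remains is to pass from these pieces to an arbitrary $T \in B^n$. The tool for that passage is the central approximate identity $\{\var_m\}$ recalled above.

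First I would fix $\e > 0$ and, using that $\{\int\var_m(x)\ell(x)\,dx\}$ tends to $0$, choose $m$ with $\int\var_m(x)\ell(x)\,dx < \e$. With $m$ fixed, $\var_m = \chi^m/\|\chi^m\|_1$ is a class function (central in $L^1(G)$), and decomposing $\pi^{\otimes m}$ into irreducibles exhibits the integrated-form operator $\a_{\var_m}$ acting by the conjugation action $\a$ on each $B^n = \cL(\cH^n)$ as a fixed positive combination of isotypic projections; hence there is a finite subset $\cS = \cS_m \subset \hat G$, depending on $m$ but \emph{not} on $n$, with $\a_{\var_m}(B^n) \subseteq B^n_{\cS}$ for every $n$. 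For this $\cS$ let $N_{\cS}$ be the integer provided by Lemma~\ref{lem11.4}.

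Next, for $n > N_{\cS}$ and $T \in B^n$ with $L_n^B(T) \le 1$, I would use centrality of $\var_m$ to see that $\a_{\var_m}$ commutes with each $\a_y$, whence $L_n^B(\a_{\var_m}(T)) \le L_n^B(T) \le 1$, while $\|T - \a_{\var_m}(T)\| \le \big(\int\var_m(x)\ell(x)\,dx\big)L_n^B(T) < \e$ by the inequality recalled just before the theorem. Then I would split
\[
T - \breve\s^n(\s^n_T) = \big(T - \a_{\var_m}(T)\big) + \big(\a_{\var_m}(T) - \breve\s^n(\s^n_{\a_{\var_m}(T)})\big) + \breve\s^n\big(\s^n_{\a_{\var_m}(T)} - \s^n_T\big).
\]
The first summand has norm $< \e$; the middle summand has norm $\le 2\d_n^A L_n^B(\a_{\var_m}(T)) \le 2\d_n^A$ by Lemma~\ref{lem11.4}, since $\a_{\var_m}(T) \in B^n_{\cS}$ and $n > N_{\cS}$; and the last summand has norm $\le \|\s^n(\a_{\var_m}(T) - T)\| \le \|\a_{\var_m}(T) - T\| < \e$, using $\|\breve\s^n\| = 1 = \|\s^n\|$. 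Thus $\|T - \breve\s^n(\s^n_T)\| < 2\e + 2\d_n^A$ for all such $T$, so $\d_n^B \le 2\e + 2\d_n^A$ once $n > N_{\cS}$. Since $\{\d_n^A\}$ converges to $0$ (theorem~3.4 of \cite{R7}, as recalled above), this yields $\limsup_n \d_n^B \le 2\e$, and as $\e > 0$ was arbitrary, $\{\d_n^B\}$ converges to $0$.

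The one point that I expect to require genuine care is the claim that $\a_{\var_m}$ carries every $B^n$ into the isotypic components indexed by a single fixed finite set $\cS$, \emph{uniformly in $n$}, along with the companion inequality $L_n^B(\a_{\var_m}(T)) \le L_n^B(T)$. Both follow from $\var_m$ being a class function together with Peter--Weyl, but the uniformity in $n$ is the heart of the matter, and it is exactly the place where the choice of $\var_m$ as a normalized power of the character of a \emph{fixed} finite-dimensional representation is used.
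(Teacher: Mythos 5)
Your proposal is correct and follows the same route as the paper's proof: choose $\var_m$ from the central approximate identity to control $\|T - \a_{\var_m}(T)\|$, observe that $\a_{\var_m}$ lands every $B^n$ in the fixed finite-dimensional isotypic piece $B^n_{\cS}$ (with $\cS$ independent of $n$), split $T - \breve\s^n(\s^n_T)$ into the same three pieces, and invoke Lemma~\ref{lem11.4} together with $\d_n^A \to 0$ for the middle term. The only cosmetic difference is that the paper packages the error as $\e/3 + \e/3 + \e/3$ (choosing $N_\e$ so that $2\d_n^A \le \e/3$) rather than your $2\e + 2\d_n^A$ followed by a $\limsup$, and the point you correctly flag as delicate --- that $\cS$ is uniform in $n$ and $L_n^B(\a_{\var_m}(T)) \le L_n^B(T)$ --- is exactly what the paper records and relies on.
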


\begin{proof}[Proof of Theorem \ref{thm11.5}]
Let $\e > 0$ be given.  We can choose $\var =\var_m$ as just above
such that for any ergodic action $\b$ of
$G$ on any unital $C^*$-algebra $C$  we have $\|c - \b_{\varphi}(c)\| \le (\e/3)L(c)$
for all $c \in C$. Now $\var$ is a positive function, and is a
linear combination of the characters of a finite subset $\cS$ of $\hat G$, 
and so the integrated operator $\b_{\varphi}$
is a completely positive unital equivariant map of $C$ onto its
$S$-isotypic component.

 Then for every $n$ and every $T \in B^n$ we have $\a_{\varphi}(T) \in B^n_\cS$ and
\[
\|T - {\breve \s}^n(\s_T^n)\| \le (\e/3)L_n^B(T) + \|\a_{\varphi}(T) -
{\breve \s}^n(\s_{\a_{\varphi}(T)}^n)\| + (\e/3)L_n^B(T).
\]
From Lemma \ref{lem11.4} there is an integer
$N_\e$ such that for any $n >N_\e$ and any $T' \in  B^n_\cS$ we have
\[
\|T' - \breve\s^n(\s^n( T'))\| \leq (\e/3)L^B_n(T').
\]
Since $\a_{\varphi}(T) \in B^n_\cS$, we can apply this
 to $T' = \a_{\varphi}(T)$. When we use the fact that
$L^B_n(\a_{\varphi}(T)) \le L^B_n(T)$, we see that for
any $n > N_\e$ and any $T \in B^n$ we have
\[
\|T - {\breve \s}^n(\s_T^n)\| \le \e L_n^B(T).
\]
This immediately implies the statement about the sequence $\{\d^B_n\}$.

\end{proof}


\section{The proof that $\g_n^B \to 0$}
\label{sec12}

Consistent with the notation of Proposition~\ref{prop8.2}, we have set
\[
\g_n^B = \sup\{\|P^n\tr(P^nT) - P^nT\|: T \in B^n \mbox{ and }
L_n^B(T) \le 1\}.
\]

\begin{proposition}
\label{prop12.1}
The sequence $\{\g_n^B\}$ converges to $0$.
\end{proposition}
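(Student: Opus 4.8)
The plan is to deduce this directly from the two results established in the preceding two sections, namely that $\g_n^A \to 0$ (Proposition \ref{prop10.1}) and that $\d_n^B \to 0$ (Theorem \ref{thm11.5}), by proving the clean inequality
\[
\g_n^B \ \le \ \g_n^A \ + \ \d_n^B
\]
for every $n$. (This parallels the combination $\g^A + \d^B$ that already appeared in Proposition \ref{prop8.5}.) Fix $n$ and $T \in B^n$; by Proposition \ref{prop8.2} it suffices to bound $\|P^n\tr(P^nT) - P^nT\|$ by $(\g_n^A + \d_n^B)L_n^B(T)$. First I would split
\[
P^n\tr(P^nT) - P^nT \ = \ \big(P^n\tr(P^nT) - P^n\breve\s^n(\s^n_T)\big) \ + \ P^n\big(\breve\s^n(\s^n_T) - T\big),
\]
and dispose of the second summand immediately via Theorem \ref{thm11.5}: since $\|P^n\| = 1$,
\[
\|P^n(\breve\s^n(\s^n_T) - T)\| \ \le \ \|\breve\s^n(\s^n_T) - T\| \ \le \ \d_n^B L_n^B(T).
\]

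For the first summand I would use that $\breve\s^n$ is unital, i.e.\ $d_n\int_{G/H}\a_x(P^n)\,dx = I$, together with the formula $\breve\s^n_g = d_n\int_{G/H} g(x)\a_x(P^n)\,dx$ recalled in Section \ref{sec8}. Multiplying the unitality identity on the left by $P^n$ and by the scalar $\tr(P^nT) = \s^n_T(e)$, and subtracting $P^n\breve\s^n(\s^n_T) = d_n\int_{G/H}\s^n_T(x)P^n\a_x(P^n)\,dx$, gives
\[
P^n\tr(P^nT) - P^n\breve\s^n(\s^n_T) \ = \ d_n\int_{G/H}\big(\s^n_T(e) - \s^n_T(x)\big)P^n\a_x(P^n)\,dx .
\]
Pulling the operator norm through the integral, and using $|\s^n_T(e) - \s^n_T(x)| \le \rho_{G/H}(e,x)L_A(\s^n_T)$ together with $L_A(\s^n_T) \le L_n^B(T)$ (which holds because $\s^n$ is equivariant and norm-non-increasing, exactly as in the proof of proposition~$1.1$ of \cite{R7}), I obtain
\[
\|P^n\tr(P^nT) - P^n\breve\s^n(\s^n_T)\| \ \le \ L_n^B(T)\,d_n\int_{G/H}\rho_{G/H}(e,x)\|P^n\a_x(P^n)\|\,dx \ = \ \g_n^A L_n^B(T),
\]
the last equality being the definition of $\g_n^A$ recalled at the start of Section \ref{sec10}. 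Combining the two estimates yields $\|P^n\tr(P^nT) - P^nT\| \le (\g_n^A + \d_n^B)L_n^B(T)$, hence $\g_n^B \le \g_n^A + \d_n^B$, which tends to $0$.

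There is essentially no serious obstacle left at this stage: all the genuine work has been done in Proposition \ref{prop10.1} (the Weyl-dimension-formula estimate) and Theorem \ref{thm11.5} (which itself rests on Sain's isotypic-component argument, Lemma \ref{lem11.4}), and the present proposition is a short bookkeeping deduction from them. The only points that need a moment of care are the identity $P^n = d_n\int_{G/H}P^n\a_x(P^n)\,dx$ — which is just the unitality of $\breve\s^n$, already verified in Section \ref{sec8} — and the passage of the operator norm through the vector-valued integral, which is harmless since the integrand is continuous in $x$ on the compact space $G/H$.
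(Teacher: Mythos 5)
Your proof is correct, and it takes a genuinely different and shorter route than the paper. The paper's own argument first uses the central approximate identity $\var_m$ to reduce to elements of an isotypic component $B^n_\cS$, and then proves the Main Lemma~\ref{mainlem12.2} by a fresh estimate that combines Proposition~\ref{prop8.3} (the variance formula), Kadison's generalized Schwarz inequality $\breve\s^n_f(\breve\s^n_f)^* \le \breve\s^n_{f\bar f}$, the convergence of the Berezin transform on $A$, the invertibility statement of Lemma~\ref{lem11.3}, and a radius bound. By contrast, you do not re-derive any finite-dimensional approximation; you simply insert $P^n\breve\s^n(\s^n_T)$ as an intermediate term, recognize the first summand $P^n\big(\s^n_T(e)I - \breve\s^n(\s^n_T)\big)$ as exactly the expression already estimated in Section~\ref{sec8} (with $f = \s^n_T$ and $L_A(\s^n_T)\le L_n^B(T)$), and control the second summand by Theorem~\ref{thm11.5}. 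This yields the clean inequality $\g_n^B \le \g_n^A + \d_n^B$, which is a slightly stronger and more transparent conclusion than what the paper records, and it also simplifies the final estimate of Section~\ref{sec13}: one gets $\prox(A,B^n) \le \g_n^A + \d_n^B$ outright, since the $\max$ in $\max\{\g_n^A+\d_n^B,\g_n^B\}$ is then automatically the first argument. The one point worth being explicit about --- which you correctly handle --- is that the Lipschitz-type bound $|\s^n_T(e)-\s^n_T(x)| \le \rho_{G/H}(e,x)\,L_A(\s^n_T)$ is being applied to a generally non-self-adjoint function $\s^n_T$; this is licensed because $L_A$ is a $*$-seminorm, so $\rho_{L_A}$ is unchanged if the self-adjointness restriction is dropped, exactly as the paper invokes in the analogous step of Section~\ref{sec8}. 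Your proof does, of course, still rest on Theorem~\ref{thm11.5}, whose own proof carries the isotypic-component machinery (Lemmas~\ref{lem11.3} and~\ref{lem11.4}); what you avoid is the second, independent pass through that machinery that the paper makes inside Main Lemma~\ref{mainlem12.2}.
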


\begin{proof}
Let $\e > 0$ be given.  With the notation that we used just before
Theorem~\ref{thm11.5},
choose $m_0$ such that for $\var = \var_{m_0}$ we have $\int
\var(x)\ell(x)dx \le \e/4$.  Then by the calculation done there we have
\[
\|T - \a_{\var}(T)\| \le (\e/4)L_n^B(T)
\]
for all $n$ and for all $T \in B^n$.
Then for any $n$ and any $T \in B^n$
\begin{align*}
&\|(P^n\tr(P^nT)-P^nT) - (P^n\tr(P^n\a_{\var}(T))
- P^n\a_{\var}(T))\| \\
&\le |\tr(P^n(T-\a_{\var}(T)))| + \|T-\a_{\var}(T)\| \\
&\le 2\|T-\a_{\var}(T)\| \le (\e/2) L_n^B(T),
\end{align*}
where for the next-to-last inequality we have used the fact that
$P^n(T-\a_{\var}(T))$ is of rank~$1$.

Now as discussed in the proof of Theorem~\ref{thm11.5}, $\var$ is a 
linear combination of the characters of a finite subset $\cS$ of $\hat G$. 
Thus $\a_{\var}(T) \in B_{\cS}^n$ and 
$L_n^B(\a_{\var}(T)) \leq L_n^B(T)$, and so we now see that it suffices to
prove:

\begin{mainlemma}
\label{mainlem12.2}
Let $\cS$ be given.  For any $\e > 0$ there is an integer $N_{\e}$ such that for any $n \ge N_{\e}$ and any
$T \in B_{\cS}^n$ we have
\[
\|P^n\tr(P^nT) - P^nT\| \le (\e/2)L_n^B(T).
\]
\end{mainlemma}

\begin{proof}
Let $f \in A$,
and let $n$ be given. Because $A$ is commutative and $\breve\s^n$
is positive, it follows from Kadison's generalized Schwarz inequality
(e.g. 10.5.9 of \cite{KdR}) that we have
\[
\breve\s^n_f(\breve\s^n_f)^* \ \leq \ \breve\s^n_{f \bar f}
\]
for the usual order on positive operators. When we combine this
with  Proposition~\ref{prop8.3}  we obtain
\begin{align*}
\|P^n(\tr&(P^n\breve\s^n_f)I - \breve\s^n_f)\|^2 = 
\tr(P^n\breve\s^n_f(\breve\s^n_f)^*P^n)
- |\tr(P^n\breve\s^n_f)|^2 \\
&\leq \mathrm{tr}(P^n\breve\s^n_{f\bar f}) - |\tr(P^n\breve\s^n_f)|^2 
 = (\s^n(\breve\s^n_{f\bar f}))(e) - |\s^n(\breve\s^n_f)(e)|^2  ,
\end{align*}
which by equation~\ref{eq11.2} above and theorem 3.4 of \cite{R7} converges to
\[
(f \bar f)(e) - |f(e)|^2 =  0
\]
as $n$ increases.

For each $n$ define an operator, $J^n$, on $B^n$ by
\[
J^n(T) = P^n(\mathrm{tr}(P^nT)I \ - \ T) .
\]
The calculation above shows that the sequence $J^n(\breve\s^n_f)$ 
converges to 0 for any $f \in A$ with $L^A(f) < \infty$. For $\cS$
as above it follows that the sequence of restrictions of
$J^n \circ \breve\s^n$ to $A_\cS$ converges to 0 in operator norm. 
Let $N_\cS$ be as in Lemma~\ref{lem11.3}, so that
$\|(\breve\s^n)^{-1}\| \leq 2$ for $n >N_\cS$. It follows that for $n > N_\cS$
we have $\|J_n\| \leq 2\|J^n \circ \breve\s^n\|$, so that the 
sequence of restrictions of $J^n$ to $B^n_\cS$ converges
to 0 in norm. Thus for any $\e' > 0$ we can find an $n_{\e'}$ such
that for $n >n_{\e'}$ and all $T \in B_{\cS}^n$ we have
\[
\|J^n(T)\| \le \e'\|T\|.
\]
Now $J^n(I) = 0$, and so it follows that
\[
\|J^n(T)\| \le \e'\|{\tilde T}\|^{\sim},
\]
where much as before  $\|\cdot\|^{\sim}$ denotes the quotient norm on
${\tilde B^n}=  B^n/\bC I$. 
But by lemma~$2.4$ of \cite{R4} the radius of each of
the algebras $B^n$ is no larger than $r = \int \ell(x)dx$, in the
sense that $\|{\tilde T}\|^{\sim} \le rL_n^B(T)$ for all $T \in B^n$. We include a slightly simpler proof here. For $T \in B^n$ let
$\eta(T) = \int \a_x(T) \ dx$, so that $\eta(T) \in {\mathbb C}I$
since $U^n$ is irreducible. Then
\[
\|{\tilde T}\|^{\sim} \leq \|T - \eta(T)\| = 
\| \int (T - \a_x(T)) dx \|
\leq L_n^B(T) \int \ell(x) dx.
\]
It
follows that
\[
J^n(T) \le r\e'L_n^B(T).
\]
Consequently, if we choose $\e' = \e/(2r)$, and set $N_{\e} = n_{\e'} \vee
N_{\cS}$, we find that for $n \ge N_{\e}$ we have
\[
\|P^n\tr(P^nT) - P^nT\| \le (\e/2)L_n^B(T)
\]
for all $T \in B_{\cS}^n$, as needed.
\end{proof}
\end{proof}


\section{The proof of the main theorem}
\label{sec13}

We now use the results of the previous sections to prove
Theorem~\ref{th9.1}.  For any $n$ set $\g_n = \max(\g_n^A,\g_n^B)$,
and define $L_n$ on $A \oplus B^n$ by
\[
L_n(f,T) = L_A(f) \vee L_n^B(T) \vee \g_n^{-1}(N_{\s^n}(f,T) \vee
N_{\s^n}({\bar f},T^*)).
\]
Then for each $n$ we have $\g_n \ge \g_n^A$ so that the quotient of
$L_n$ on $A$ is $L_A$ by Proposition~\ref{prop8.1}, and we have $\g_n \ge
\g_n^B$ so that the quotient of $L_n$ on $B^n$ is $L_n^B$ by
Proposition~\ref{prop8.2}.  Thus $L_n$ is in $\cM_C(L_A,L_n^B)$ as
defined in Notation~\ref{note5.5}. 

Then according to Proposition~\ref{prop7.1} (with notation as in
Proposition~\ref{prop8.1} and in the sentence before
Proposition~\ref{prop10.1}), $S(A)$ is in the $\g_n$-neighborhood
of $S(B^n)$ for  $\rho_{L_n}$. Furthermore, according to Proposition~\ref{prop8.5}
(with notation as in Theorem~\ref{thm11.5})  $S(B^n)$ is in the
$(\g^A_n + \d^B_n)$-neighborhood of $S(A)$. It follows that 
\[
\dist_H^{\rho_{L_n}}(S(A), S(B^n)) \leq \max\{\g^A_n + \d^B_n, \g^n\} 
\leq \max\{\g^A_n + \d^B_n, \g^B_n\} ,
\]
and so
\[
\prox(A,B^n) \leq \max\{\g^A_n + \d^B_n, \g^B_n\} .
\]
But $\g^A_n$, $\d^B_n$ and $\g^B_n$ all converge to 0 as $n$ goes
to $\infty$, according to Proposition~\ref{prop10.1}, 
Theorem \ref{thm11.5} (theorem~$6.1$ of \cite{R7}), and 
Proposition~\ref{prop12.1} respectively. Consequently
$\prox(A,B^n)$ converges to 0 as $n$ goes to $\infty$, as desired.


\section{Matricial seminorms}
\label{sec14}

In this section we will briefly describe the relations between the
previous sections of this paper and several variants of quantum
Gromov--Hausdorff distance.

The first variant is the matricial quantum Gromov--Hausdorff
distance introduced by Kerr \cite{Krr}.  It has the advantage that if
two $C^*$-algebras with $\Lip$-norms are at distance $0$ for his
distance then the $C^*$-algebras are isomorphic.  We will not repeat
here Kerr's definitions and results for general operator systems;
rather we will only indicate, somewhat sketchily, what Kerr's variant
says in the context of the present paper.  For any unital
$C^*$-algebra $A$ and each $q \in \bZ_{>0}$ the $*$-algebra $M_q(A)$
of $q \x q$ matrices with entries in $A$ has a unique $C^*$-norm.  The
collection of these $C^*$-norms forms a ``matricial norm'' for $A$.
Given unital $C^*$-algebras $A$ and $B$, a linear map $\var: A \to B$
determines for each $q$ a linear map, $\var^q$, from $M_q(A)$ to
$M_q(B)$, by entry-wise application.  One says that $\var$ is
``completely positive'' if each $\var^q$ is positive as a map between
$C^*$-algebras.  For each $q$ let $UCP_q(A)$ denote the collection of
unital completely positive maps from $A$ into $M_q(\bC)$.  The
$UCP_q(A)$'s are called the ``matricial state-spaces'' of $A$.  All
these considerations apply equally well to unital $C^*$-normed
algebras, where ``positive'' is with respect to the completions.

Let a $\Lip^*$-norm, $L$, on $A$ be specified.  Then Kerr defines a
metric, $\rho_L^q$, on $UCP_q(A)$ by
\[
\rho_L^q(\var,\psi) = \sup\{\|\var(a)-\psi(a)\|: L(a) \le 1\},
\]
and he shows that the topology on $UCP_q(A)$ determined by $\rho_L^q$
agrees with the point-norm topology (and so is compact).  Now let
$(A,L_A)$ and $(B,L_B)$ be unital $C^*$-algebras with $\Lip^*$-norms.
Essentially as in definition~$4.2$ of \cite{R6} let $\cM(L_A,L_B)$
denote the set of $\Lip^*$-norms on $A \oplus B$ whose quotients on
the self-adjoint part agree with $L_A$ and $L_B$.  Note that
$UCP_n(A)$ and $UCP_n(B)$ can be viewed as subsets of $UCP_n(A \oplus
B)$ in an evident way.  Then for each $q$ Kerr defines the
$q$-distance, $\dist_s^q$, between $A$ and $B$ by
\[
\dist_s^q(A,B) = \inf\{\dist_H^{\rho_L^q}(UCP_q(A),UCP_q(B)): L \in
\cM(A,B)\},
\]
and he defines the complete distance, $\dist_s$, by
\[
\dist_s(A,B) = \sup_q\{\dist_s^q(A,B)\}.
\]
Finally (for our purposes), he shows that for our setting of coadjoint
orbits with $A = C(G/H)$ and $B^n = \cL(\cH_n)$ with their
$\Lip^*$-norms from a length function $\ell$, one has
\[
\lim_{n \to \i} \dist_s(A,B^n) = 0.
\]

We can quickly adapt Kerr's arguments to our Leibniz setting.  For
$C^*$-algebras $A$ and $B$ equipped with $C^*$-metrics, we define
$\cM_C(L_A,L_B)$ exactly as in Notation~\ref{note5.5}.  Any $L$ in
$\cM_C(L_A,L_B)$ is, in particular, a $\Lip^*$-norm, and so defines
for each $q$ the metric $\rho_L^q$ on $UCP_q(A \oplus B)$.  We can
then define, for each $q$,
\[
\prox^q(A,B) = \inf\{\dist_H^{\rho_L^q}(UCP_q(A),UCP_q(B)): L \in
\cM_C(A \oplus B)\}.
\]
Then we  can define ``complete proximity'' by
\[
\prox_s(A,B) = \sup_q\{\prox^q(A,B)\}.
\]
Of course, we have
\[
\dist_s(A,B) \le \prox_s(A,B).
\]

\begin{theorem}
\label{th14.1}
For $A = C(G/H)$ and $B^n = \cL(\cH_n)$ with their $C^*$-metrics $L_A$
and $L_B^n$ as defined earlier in terms of a length function on $G$,
we have
\[
\lim_{n \to \i} \prox_s(A,B^n) = 0.
\]
\end{theorem}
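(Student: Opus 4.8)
The plan is to show that the \emph{same} seminorms $L_n \in \cM_C(L_A, L_B^n)$ constructed in the proof of Theorem~\ref{th9.1} already witness $\prox_s(A,B^n) \to 0$. The point is that the neighborhood estimates established in Sections~\ref{sec7}--\ref{sec12} survive composition with an arbitrary unital completely positive map into $M_q(\bC)$, and the bounds so obtained do not depend on $q$; taking the supremum over $q$ and then letting $n \to \i$ then gives the theorem.

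First I would record the complete positivity of the two Berezin maps. The symbol map $\s^n \colon B^n \to A$ is unital and positive, and a positive linear map into a commutative $C^*$-algebra is automatically completely positive; hence $\s^n$ is unital completely positive. Its adjoint $\breve\s^n \colon A \to B^n$, given by $\breve\s^n_f = d_n \int_{G/H} f(x)\a_x(P^n)\,dx$, is an integral of the maps $f \mapsto d_n f(x)\a_x(P^n)$, each of which is the composition of the evaluation functional at $x$ (a state, hence completely positive) with the completely positive map $c \mapsto c\,d_n\a_x(P^n)$ from $\bC$ into $B^n$; so $\breve\s^n$ is unital completely positive as well. Consequently, if $\var \in UCP_q(A)$ then $\var \circ \s^n \in UCP_q(B^n)$, and if $\psi \in UCP_q(B^n)$ then $\psi \circ \breve\s^n \in UCP_q(A)$. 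Since $L_n$ is in particular a $\Lip^*$-norm, it defines for each $q$ the metric $\rho_{L_n}^q$ on $UCP_q(A\oplus B^n)$, and $UCP_q(A)$ and $UCP_q(B^n)$ sit inside $UCP_q(A\oplus B^n)$ via composition with the coordinate projections.

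Next I would rerun Propositions~\ref{prop7.1} and \ref{prop8.5} with matricial states in place of states, using only that unital completely positive maps are norm-contractive. Given $\var \in UCP_q(A)$, put $\psi = \var\circ\s^n$; if $(f,T) \in A\oplus B^n$ satisfies $L_n(f,T) \le 1$ then $N_{\s^n}(f,T) \le \g_n$, and
\[
\|\var(f) - \psi(T)\| = \|\var(f - \s^n_T)\| \le \|f - \s^n_T\| \le N_{\s^n}(f,T) \le \g_n,
\]
exactly as in the proof of Proposition~\ref{prop7.1}, so $\rho_{L_n}^q(\var,\psi) \le \g_n$ and $UCP_q(A)$ lies in the $\g_n$-neighborhood of $UCP_q(B^n)$. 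Symmetrically, given $\psi \in UCP_q(B^n)$, put $\var = \psi\circ\breve\s^n$; then $\|\var(f) - \psi(T)\| = \|\psi(\breve\s^n_f - T)\| \le \|\breve\s^n_f - T\|$, and the estimate for $\|\breve\s^n_f - T\|$ from the proof of Proposition~\ref{prop8.5} (via Theorem~\ref{thm11.5}) shows that $UCP_q(B^n)$ lies in the $(\g_n^A + \d_n^B)$-neighborhood of $UCP_q(A)$. None of these bounds involves $q$.

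Finally I would assemble the estimate exactly as in Section~\ref{sec13}: for every $q$,
\[
\dist_H^{\rho_{L_n}^q}(UCP_q(A), UCP_q(B^n)) \le \max\{\g_n^A + \d_n^B,\ \g_n^B\},
\]
hence $\prox^q(A,B^n) \le \max\{\g_n^A + \d_n^B, \g_n^B\}$ for all $q$, and therefore $\prox_s(A,B^n) = \sup_q \prox^q(A,B^n) \le \max\{\g_n^A + \d_n^B,\ \g_n^B\}$, which tends to $0$ by Propositions~\ref{prop10.1} and \ref{prop12.1} and Theorem~\ref{thm11.5}. I do not expect a genuine new obstacle here: the only steps needing care are the complete positivity of $\s^n$ and $\breve\s^n$ (so that $\var\circ\s^n$ and $\psi\circ\breve\s^n$ really are matricial states, and hence admissible competitors in the Hausdorff-distance estimates) and the observation that, because unital completely positive maps are norm-contractive, every norm estimate in the proof of Theorem~\ref{th9.1} passes through verbatim with constants independent of $q$.
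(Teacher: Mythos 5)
Your proposal is correct and follows essentially the same route as the paper's proof: take the same $L_n \in \cM_C(L_A,L^B_n)$ as in Theorem~\ref{th9.1}, use that $\s^n$ and $\breve\s^n$ are unital completely positive so that pre-composition sends $UCP_q(A)$ into $UCP_q(B^n)$ and vice versa, and observe that the norm estimates from Propositions~\ref{prop7.1} and \ref{prop8.5} pass through unchanged and are uniform in $q$. The paper simply asserts complete positivity of $\s^n$ and $\breve\s^n$ as the ``key observation'' (crediting Kerr), whereas you supply a short justification; otherwise the two arguments coincide.
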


\begin{proof}
We follow the outline of Kerr's example~$3.13$ of \cite{Krr}, but for
a given $n$ we set, as earlier,
\[
L_n = L_A \vee L_n^B \vee N_n \vee N_n^*
\]
with $N_n = \g_n^{-1}N_{\s^n}$ and with $\g_n$ chosen exactly as in
the proof of Theorem~\ref{th9.1} that is completed in Section~\ref{sec13}.
Thus $L_n \in \cM_C(L_A,L_n^B)$.  The key observation, for Kerr and
for us, is that $\s^n$ and ${\breve \s}^n$ are (unital) completely
positive maps, so that if $\var \in UCP_q(A)$ then $\var \circ \s^n$
is in $UCP_q(B^n)$, and similarly for ${\breve \s}^n$.  Given $\var
\in UCP_q(A)$, set $\psi = \var \circ \s^n$.  Then exactly as in the
proof of Proposition~\ref{prop7.1} we see that if $L_n(f,T) \le 1$
then
\[
\|\var(f)-\psi(T)\| \le \|f-\s_T\| \le \g_n,
\]
so that $UCP_q(A)$ is in the $\g_n$ neighborhood of $UCP_q(B^n)$.  On
the other hand, for any $\psi \in UCP_q(B^n)$ set $\var = \psi \circ
{\breve \s}$.  Then in the somewhat more complicated way given in
Section~\ref{sec13} we find that $UCP_q(B^n)$ is in as small a
neighborhood of $UCP_q(A)$ as desired if $n$ is sufficiently large.
\end{proof}

We remark that in section~$5$ of \cite{Krr} Kerr considers a weak form
of the Leibniz property which he calls ``$f$-Leibniz'' (for which he
comments that the corresponding distance may not satisfy the triangle
inequality).

In \cite{Lih2} Hanfeng Li introduced a quite flexible variant of
quantum Gromov--Hausdorff distance that in a suitable way uses the
Hausdorff distance between the unit $L$-balls of two quantum metric
spaces.  Li called this ``order-unit quantum Gromov--Hausdorff
distance''.  In \cite{KrL} Kerr and Li developed a matricial version
of Li's variant, which they called ``operator Gromov--Hausdorff
distance''.  They then show (theorem~$3.7$) that this version
coincides with Kerr's matricial quantum Gromov--Hausdorff distance.
It would be interesting to have a version of our complete proximity
above that is defined in terms of the unit $L$-balls, since it might
well have certain technical advantages similar to those possessed by
Li's order-unit Gromov--Hausdorff distance.

For the specific case of $C^*$-algebras, Li introduced \cite{Lih3} yet
another variant of quantum Gromov--Hausdorff distance that explicitly
uses the algebra multiplication.  He calls this ``$C^*$-algebraic
quantum Gromov--Hausdorff distance''.  It would be interesting to know
how this version relates to Leibniz seminorms and proximity.  We
should mention that in several places the later papers of Kerr and of
Li discussed in this section again consider the $f$-Leibniz property
that Kerr introduced in \cite{Krr}.

Hanfeng Li has pointed out to me that much the same arguments as
given in the last part of Section 6 
showing that  prox  is dominated by his $\mathrm{dist}_{nu}$,
also show that our ``complete proximity'' $\mathrm{prox}_s$ 
is dominated
by $\mathrm{dist}_{nu}$; and since, as mentioned in Section 6, 
the examples that have 
been studied so far for convergence for quantum Gromov-Hausdorff
distance all involve nuclear $C^*$-algebras, and convergence for them
holds for $\mathrm{dist}_{nu}$, this gives for them a proof of convergence
for $\mathrm{prox}_s$.

The papers discussed above all begin just with a $\Lip$-norm.  In a
different direction Wei Wu has defined and studied matricial Lipschitz
seminorms \cite{Wuw1,Wuw2,Wuw3}.  Again, we will not repeat here his
general definitions and results; rather we will only indicate somewhat
sketchily how they can be adapted to the context of the present paper,
I thank Wei Wu for answering several questions that I had about his
papers.

Let $G$ be a compact group equipped with a length function $\ell$, and
let $\a$ be an action of $G$ on a unital $C^*$-algebra $A$.  Then $G$
has an evident entry-wise action on $M_q(A)$ for each $q \in
\bZ_{>0}$, and we can then use $\ell$ to define a seminorm, $L^q$, on
each $M_q(A)$ as in Example 2.5.  This family of seminorms satisfies
Ruan-type axioms \cite{ER}, in particular, $L(T_{ij}) \le L^q(T)$ for
$T = \{T_{ij}\} \in M_q(A)$. Wu presents this family as one example of
what he calls a ``matrix Lipschitz seminorm'' on $A$.  It is a very
natural example, and it indicates how natural it is to consider matrix
Lipschitz seminorms quite generally.  However Wu does not make use of
the fact that each of the seminorms $L^q$ above is Leibniz (in fact,
strongly Leibniz), and he uses the bridge from \cite{R7}, which is not
Leibniz. 

For $A = C(G/H)$ and $B^n$ as earlier we denote the seminorms by
$L_A^q$ and $L_B^{n,q}$.
As Wu notes, the Berezin symbol map $\s^n$ gives, by entry-wise
application, a completely positive map from $M_q(B^n)$ to $M_q(A)$ for
each $q \in \bZ_{>0}$.  We denote these maps still by $\s^n$.  Much as
in Section~\ref{sec7} we can then define a seminorm on $M_q(A \oplus
B^n)$ by
\[
\|M_f \circ \s^n - \s^n \circ \L_T\|.
\]
But the analogue of the alternative description in terms of seminorms
$N_x$ given in Proposition~\ref{prop7.2} is now more complicated, and
so I have found it best just to work directly with the analogs of the
$N_x$'s.  Specifically, we write $\diag(\a_x(P^n))$ for the matrix in
$M_q(B^n)$ each of whose diagonal entries is $\a_x(P^n)$, with all
other entries being $0$.  For each $x \in G$ (or $G/H$) we set
\[
N_x^{n,q}(f, T) = \|\mathrm{diag}(\a_x(P))(f(x) \otimes I_n - T)\|
\]
for any $(f,T) \in M_q(A \oplus B^n)$. It is easily seen that
$N_x^{n,q}$ is strongly Leibniz. We then set
\[
N_{\s}^{n,q}(f,T) = \sup\{N_x^{n,q}(f,T): x \in G\}.
\]
Then we set
\[
N_{n,q}(f,T) = \g^{-1}N_{\s}^{n,q}(f,T),
\]
where $\g$ remains to be chosen for each $n$.  Finally we set
\[
L_{n,q}(f,T) = L_A^q(f) \vee L_B^{n,q}(T) \vee N_{n,q}(f,T) \vee
N_{n,q}^*(f,T).
\]
It is easily verified that the family $\{L_{n,q}\}$ is a ``matrix
Lipschitz seminorm'' as defined in definition~$3.1$ of \cite{Wuw3}.
We would like to choose $\g$ in such a way that the quotients of
$L_{n,q}$ on $M_q(A)$ and $M_q(B^n)$ are $L_A^q$ and $L_B^{n,q}$.

We consider the quotient on $M_q(A)$ first.  We note, as does Wu, that
${\breve \s}^n$ gives, by entry-wise application, a unital completely
positive map from $M_q(A)$ to $M_q(B^n)$.  Given $f \in M_q(A)$, we
set $T = {\breve \s}_f^n$.  Then, much as in Section~\ref{sec8},
\[
N_x^{n,q}(f,T) = \left\|\left\{\a_x(P^n)(f_{ij}(x)I_n - d \int
f_{ij}(y)\a_y(P^n)dy\right\}\right\|,
\]
where $\{\cdot \}$ denotes a matrix.
As in Section~\ref{sec8}, the translation-invariance of $L_A^q$ and
the arbitrariness of $f$ permit us to consider just the case in which
$x = e$.  Then, with manipulations as in Section~\ref{sec8}, we see
that
\begin{align*}
N_e^{n,q}(f,T) &\le d \int \|\{f_{ij}(e) - f_{ij}(y)\}\|
\|\diag(P^n\a_y(P^n))\|dy \\
&\le L_A^q(f) \int \rho(e,y)d\|P^n\a_y(P^n)\|dy \\
&= L_A^q(f)\g_n^A,
\end{align*}
where $\g_n^A$ is defined at the beginning of Section~\ref{sec9}.
Thus if $\g \ge \g_n^A$ then the quotient of $L_{n,q}$ on $M_q(A)$
will be $L_A^q$, which is exactly the same condition as for the case
of $q = 1$ treated in Section~\ref{sec8}.

We now consider the quotient on $M_q(B^n)$.  Given $T \in M_q(B^n)$,
we set $f = \s_T^n$.  Then
\[
N_x^{n,q}(f,T) = \|\{\a_x(P)(\tr(\a_x(P)T_{ij})I_n-T_{ij})\}\|.
\]
I don't see a good way to estimate this except by the entry-wise
estimate
\begin{align*}
&\le q \sup_{i,j} \|\a_x(P)(\tr(\a_x(P)T_{ij})I_n - T_{ij})\| \\
&\le q \g_n^B \sup_{i,j} L_n^B(T_{ij}) \le q \g_n^B L_B^{n,q}(T),
\end{align*}
where $\g_n^B$ is defined at the beginning of Section~\ref{sec12}, and
where we have used the $\a$-invariance of $L_n^B$, and the fact that
for any $R \in M_q(B^n)$ we have $\|R\| \le q
\sup_{i,j}\{\|R_{ij}\|\}$.  (To see this latter, express $R$ as the
sum of the $q$ matrices whose only non-zero entries are the entries
$R_{ij}$ of $R$ for which $i-j$ is constant modulo $q$.)  Thus if $\g
\ge q \g_n^B$ then the quotient of $L_{n,q}$ on $M_q(B^n)$ will be
$L_B^{n,q}$.  The factor of $q$ in this estimate has the quite
undesirable effect that we seem not to be able to say that for a
sufficiently large $\g$ it is true that for all $q$ simultaneously the
quotient of $L_{n,q}$ on $M_q(B^n)$ is $L_B^{n,q}$.  Thus the family
$\{L_{n,q}\}$ can not be used to estimate the ``quantized
Gromov--Hausdorff distance'' defined by Wu in definition~$4.5$ of
\cite{Wuw3}.  But for fixed $q$ we will still have that $q\g_n^B$
converges to $0$ as $n \to \i$, and this may still be useful, for
instance in dealing with vector bundles along the lines discussed in
\cite{R17}.

According to Wu's definition of ``quantized Gromov--Hausdorff
distance'' we must now show that $UCP_q(A)$ and $UCP_q(B_n)$ are
within suitable neighborhoods of each other in $UCP_q(A \oplus B)$
(once we have chosen $\g \ge \g_n^A \vee q\g_n^B$).  Given $f \in
M_q(A)$ and $\var \in UCP_q(A)$ (which Wu denotes by $CS_q(A)$), let
$\<\<\var,f\>\>$ denote the element of $M_{q^2}(\bC)$ whose entries
are the $\var_{ij}(f_{kl})$'s.  (See $1.1.27$ of \cite{ER}.)
Equivalently, view $f$ as in $M_q \otimes A$, and let ${\tilde \var} =
I_q \otimes \var$ so that ${\tilde \var}: M_q \otimes A \to M_q
\otimes M_q$.  Then $\<\<\var,f\>\> = {\tilde \var}(f)$.  We can thus
use $L_A^q$ to define a metric, $D_{L_A^q}$, on $UCP_q(A)$, defined by
\[
D_{L_A^q}(\var_1,\var_2) = \sup\{\|\<\<\var_1,f\>\> -
\<\<\var_2,f\>\>\|: f \in M_q(A), L_A^q(f) \le 1\}.
\]
(See proposition~$3.1$ of \cite{Wuw2}.)  Wu shows that the topology on
$UCP_q(A)$ from the metric $D_{L_A^q}$ coincides with the point-norm
topology.  In the same way $L_B^{n,q}$ defines a metric on
$UCP_q(B^n)$, and $L_{n,q}$ defines a metric on $UCP_q(A \oplus B^n)$.
Furthermore, when we view $UCP_q(A)$ and $UCP_q(B^n)$ as subsets of
$UCP_q(A \oplus B^n)$, the restriction of $D_{L_{n,q}}$ to them will
agree with $D_{L_A^q}$ and $D_{L_B^{n,q}}$ if the quotients of
$L_{n,q}$ on $M_q(A)$ and $M_q(B^n)$ agree with $L_A^q$ and
$L_B^{n,q}$.  (See proposition~$3.6$ of \cite{Wuw3}.)

We now show that $UCP_q(A)$ is in a suitably small neighborhood of
$UCP_q(B^n)$ for $D_{L_n^q}$.

\begin{lemma}
\label{lem14.2}
For any $(f,T) \in M_q(A \oplus B^n)$ we have
\[
\|f - \s_T^n\| \le q N_{\s}^{n,q}(f,T).
\]
\end{lemma}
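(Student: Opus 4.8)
The plan is to deduce this matricial inequality from the scalar case $q=1$, entry by entry, paying a factor of $q$ when passing from control of the individual matrix entries back to control of the whole matrix.

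First I would isolate the scalar statement: for $f \in A$ and $T \in B^n$ one has $\|f - \s^n_T\| \le N_{\s^n}(f,T)$. This is essentially the computation in the proof of Proposition~\ref{prop7.1}: since $\s^n_I$ is the constant function $1$, evaluating $M_f\circ\s^n - \s^n\circ\L_T$ at the identity operator $I\in B^n$ gives $(M_f\circ\s^n-\s^n\circ\L_T)(I) = f - \s^n_T$, so $\|f-\s^n_T\| \le \|M_f\circ\s^n-\s^n\circ\L_T\| = N_{\s^n}(f,T)$, and by Proposition~\ref{prop7.2} the latter equals $\sup_x\|\a_x(P^n)(f(x)I - T)\|$.

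Now let $(f,T)\in M_q(A\oplus B^n)$, so that $f = (f_{ij})\in M_q(A)$ and $T = (T_{ij})\in M_q(B^n)$, and note that $(\s^n_T)_{ij} = \s^n_{T_{ij}}$. Applying the scalar case to each pair $(f_{ij},T_{ij})$ gives $\|f_{ij}-\s^n_{T_{ij}}\| \le \sup_x\|\a_x(P^n)(f_{ij}(x)I_n - T_{ij})\|$. For each fixed $x$, the operator $\a_x(P^n)(f_{ij}(x)I_n - T_{ij})$ is precisely the $(i,j)$ block of the matrix $\diag(\a_x(P^n))(f(x)\otimes I_n - T)\in M_q(B^n)$; since a block of an operator matrix is a compression of the whole operator, its norm is at most $\|\diag(\a_x(P^n))(f(x)\otimes I_n - T)\| = N_x^{n,q}(f,T) \le N_{\s}^{n,q}(f,T)$. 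Taking the supremum over $x$ yields $\|f_{ij}-\s^n_{T_{ij}}\| \le N_{\s}^{n,q}(f,T)$ for every $i,j$.

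Finally I would reassemble the entries using the elementary bound $\|R\| \le q\sup_{i,j}\|R_{ij}\|$, valid for $R$ in $M_q$ of any $C^*$-algebra: this is the estimate recalled for $M_q(B^n)$ in the proof of Main Lemma~\ref{mainlem12.2} (write $R$ as the sum of the $q$ matrices supported on the ``diagonals'' $i-j \equiv s \pmod q$, each of which has norm $\sup_i\|R_{i,i-s}\|$), and it applies verbatim to $R = f - \s^n_T\in M_q(A)$. This gives
\[
\|f - \s^n_T\| \ \le \ q\sup_{i,j}\|f_{ij}-\s^n_{T_{ij}}\| \ \le \ q\,N_{\s}^{n,q}(f,T),
\]
as required. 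There is no genuine obstacle here; the only steps needing a moment's care are the identification of $\a_x(P^n)(f_{ij}(x)I_n - T_{ij})$ with the $(i,j)$ block of $\diag(\a_x(P^n))(f(x)\otimes I_n - T)$ and the fact that passing to a block is norm-nonincreasing. (Incidentally, compressing $f(x)\otimes I_n - T$ directly by $\diag(\a_x(P^n))$ on both sides even gives the sharper bound $\|f-\s^n_T\| \le N_{\s}^{n,q}(f,T)$ with no factor $q$, but the cruder entry-wise estimate above is all that is needed here and matches the style of Section~\ref{sec14}.)
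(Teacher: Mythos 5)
Your proof is correct and is essentially the paper's argument reorganized: the paper's chain of inequalities also pays the factor $q$ to pass from the $M_q$ norm to the supremum of the entries, bounds each scalar entry $|f_{ij}(x) - \tr(\a_x(P^n)T_{ij})|$ by $\|\a_x(P^n)(f_{ij}(x)I_n - T_{ij})\|$ (you delegate this to the scalar case via Propositions~\ref{prop7.1}--\ref{prop7.2}, the paper does it inline via the rank-one trace estimate), and then majorizes each operator entry by the full operator-matrix norm $N_x^{n,q}(f,T)$; the $q$-factor is applied to the scalar matrix at fixed $x$ in the paper and to $f - \s^n_T \in M_q(A)$ in yours, but these are interchangeable since the $M_q(A)$-norm is the sup over $x$ of the $M_q(\bC)$-norms. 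Your closing parenthetical is also correct and in fact sharpens the stated lemma: since $QXQ = \tr(QX)Q$ for the rank-one projection $Q = \a_x(P^n)$, two-sided compression gives $\diag(\a_x(P^n))(f(x)\otimes I_n - T)\diag(\a_x(P^n)) = (f(x)-\s^n_T(x))\otimes\a_x(P^n)$, which has norm $\|f(x)-\s^n_T(x)\|$, whence $\|f-\s^n_T\|\le N_\s^{n,q}(f,T)$ with no factor of $q$.
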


\begin{proof}
\begin{align*}
\|f-\s_T^n\| &= \sup_x\|\{f_{ij}(x) - \tr(\a_x(P)T_{ij})\}\| \\
&\le q \sup_{x,i,j} |\tr(\a_x(P)(f_{ij}(x)I_n - T_{ij}))| \\
&\le q \sup_{x,i,j}\|\a_x(P)(f_{ij}(x)I_n - T_{ij})\| \\
&\le q \sup_x\|\{\a_x(P)(f_{ij}(x)I_n - T_{ij})\}\| = q
N_{\s}^{n,q}(f,T).
\end{align*}
\end{proof}

We can now proceed much as in the first half of Wu's proof of
theorem~$8.6$ of \cite{Wuw3}. Let $q$ be fixed, and now set $\g_n =
\g_n^A \vee q\g_n^B$ in the definition of $L_{n,q}$, so that $L_{n,q}$
has the right quotients.  Let $\var \in UCP_q(A)$ be given.  Set $\psi
= \var \circ \s^n$, so that $\psi \in UCP_q(B^n)$.  Suppose that
$(f,T) \in M_q(A \oplus B^n)$ and that $L_n^q(f,T) \le 1$, so that
$N_{\s}^{n,q}(f,T) \le \g_n$.  Then by Lemma~\ref{lem14.2}
\begin{align*}
\|\<\<\var,f\>\> - \<\<\psi,T\>\>\| &= \|\<\<\var,f-\s_T^n\>\>\| \\
&\le \|f-\s_T^n\| \le q N_{\s}^{n,q}(f,T) \le q \g_n .
\end{align*}
Thus $UCP_q(A)$ is in the $q\g_n$-neighborhood of $UCP_q(B_n)$.  Since
$\g_n^A \vee q\g_n^B$ converges to $0$ as $n\to \i$ we can make
$q\g_n$ as small as desired by choosing $n$ large enough.

We now show that $UCP_q(B^n)$ is in a suitably small neighborhood of
$UCP_q(A)$.  We can proceed as in the second half of Wu's proof of his
theorem~$8.6$ of \cite{Wuw3}.  Let $\psi \in UCP_q(B^n)$ be given.
Set $\var = \psi \circ {\breve \s}^n$, so that $\var \in UCP_q(A)$.
For $L(f,T) \le 1$ as above we have, much as in the proof of
Proposition~\ref{prop8.5},
\begin{align*}
\|\<\<\var,f\>\> - \<\<\psi,T\>\>\| &= \|\<\<\psi,{\breve \s}_f^n -
T\>\>\| \\
&\le \|{\breve \s}_f^n-T\| \le \|{\breve \s}_f^n - {\breve
\s}^n(\s_T^n)\| + \|{\breve \s}^n(\s_T^n) - T\| \\
&\le \|f-\s_T^n\| + \|{\breve \s}^n(\s_T^n) - T\| \\
&\le q\g_n + \|{\breve \s}^n(\s_T^n) - T\|.
\end{align*}
We can deal with the second of these terms much as we do in
Section~\ref{sec13}, just as Wu does.  One then sees that for a given
$\e > 0$ one can (for a fixed $q$) choose $N$ large enough that
$UCP_q(A)$ and $UCP_q(B^n)$ are in each other's $\e$-neighborhood for
$n \ge N$.


\def\dbar{\leavevmode\hbox to 0pt{\hskip.2ex \accent"16\hss}d}
\providecommand{\bysame}{\leavevmode\hbox to3em{\hrulefill}\thinspace}
\providecommand{\MR}{\relax\ifhmode\unskip\space\fi MR }
\providecommand{\MRhref}[2]{%
  \href{http://www.ams.org/mathscinet-getitem?mr=#1}{#2}
}
\providecommand{\href}[2]{#2}



\end{document}